\title{Realizing Transfer Systems as Suboperads of Coinduced Operads}
\author{Ben Szczesny} 
\date{}
\setlist[enumerate,1]{nosep, label = (\arabic*), labelindent=\parindent}
\numberwithin{equation}{section}
\crefname{diagram}{diagram}{diagrams} %
\Crefname{diagram}{Diagram}{Diagrams}
\crefname{condition}{condition}{conditions}
\Crefname{condition}{Condition}{Conditions}
\newtheorem{mainthm}{Theorem}
\declaretheorem[name=Theorem,
refname={theorem,theorems},
Refname={Theorem,Theorems},
parent=section]{theorem}
\declaretheorem[name=Lemma,
refname={lemma,lemmas},
Refname={Lemma,Lemmas},
parent=section,
sibling=theorem]{lemma}
\declaretheorem[name=Definition,
refname={definition,definitions},
Refname={Definition,Definitions},
parent=section,
style=definition,
sibling=theorem]{definition}
\declaretheorem[name=Notation,
refname={notation,notations},
Refname={Notation,Notations},
parent=section,
style=definition,
sibling=theorem]{notation}
\declaretheorem[name=Example,
refname={example,examples},
Refname={Example,Examples},
parent=section,
style=definition,
sibling=theorem]{example}
\declaretheorem[name=Remark,
refname={remark,remarks},
Refname={Remark,Remarks},
parent=section,
style=remark,
sibling=theorem]{remark}
\declaretheorem[name=Warning,
refname={warning,warnings},
Refname={Warning,Warnings},
parent=section,
style=remark,
sibling=theorem]{warning}
\renewcommand{\todo}[2][]{\tikzexternaldisable\@todo[#1]{#2}\tikzexternalenable}
\newcommand\restr[2]{{%
  \left.\kern-\nulldelimiterspace %
  #1 %
  \littletaller %
  \right|_{#2} %
  }}
\newcommand{\littletaller}{\mathchoice{\vphantom{\big|}}{}{}{}}
\DeclarePairedDelimiterX\set[1]\lbrace\rbrace{\def\given{\;\delimsize\vert\;}#1}%
\DeclarePairedDelimiter\abs{\lvert}{\rvert}
\DeclarePairedDelimiterX\inner[1]\langle\rangle{\def\and{\,\delimsize\vert\,}#1}
\newcommand{\Hill}{Hill}
\newcommand{\Blumberg}{Blumberg}
\NewVariableClass\MyVar[output=\MyVar]
\MyVar\va {a}
\MyVar\vb {b}
\MyVar\vc {c}
\MyVar\vd {d}
\MyVar\ve {e}
\MyVar\vf {f}
\MyVar\vg {g}
\MyVar\vh {h}
\MyVar\vi {i}
\MyVar\vj {j}
\MyVar\vk {k}
\MyVar\vl {l}
\MyVar\vm {m}
\MyVar\vn {n}
\MyVar\vo {o}
\MyVar\vp {p}
\MyVar\vq {q}
\MyVar\vr {r}
\MyVar\vs {s}
\MyVar\vt {t}
\MyVar\vu {u}
\MyVar\vv {v}
\MyVar\vw {w}
\MyVar\vx {x}
\MyVar\vy {y}
\MyVar\vz {z}
\MyVar\vA {A}
\MyVar\vB {B}
\MyVar\vC {C}
\MyVar\vD {D}
\MyVar\vE {E}
\MyVar\vF {F}
\MyVar\vG {G}
\MyVar\vH {H}
\MyVar\vI {I}
\MyVar\vJ {J}
\MyVar\vK {K}
\MyVar\vL {L}
\MyVar\vM {M}
\MyVar\vN {N}
\MyVar\vO {O}
\MyVar\vP {P}
\MyVar\vQ {Q}
\MyVar\vR {R}
\MyVar\vS {S}
\MyVar\vT {T}
\MyVar\vU {U}
\MyVar\vV {V}
\MyVar\vW {W}
\MyVar\vX {X}
\MyVar\vY {Y}
\MyVar\vZ {Z}
\MyVar\valpha {\alpha}
\MyVar\vvaralpha {\varalpha}
\MyVar\vbeta {\beta}
\MyVar\vgamma {\gamma}
\MyVar\vdelta {\delta}
\MyVar\vepsilon {\epsilon}
\MyVar\vvarepsilon {\varepsilon}
\MyVar\vzeta {\zeta}
\MyVar\veta {\eta}
\MyVar\vtheta {\theta}
\MyVar\viota {\iota}
\MyVar\vkappa {\kappa}
\MyVar\vlambda {\lambda}
\MyVar\vmu{\mu}
\MyVar\vnu{\nu}
\MyVar\vxi{\xi}
\MyVar\vpi{\pi}
\MyVar\vvarpi {\varpi}
\MyVar\vrho {\rho}
\MyVar\vsigma {\sigma}
\MyVar\vtau {\tau}
\MyVar\vupsilon {\upsilon}
\MyVar\vphi {\phi}
\MyVar\vvarphi {\varphi}
\MyVar\vchi {\chi}
\MyVar\vpsi {\psi}
\MyVar\vomega {\omega}
\MyVar\vGamma {\Gamma}
\MyVar\vDelta {\Delta}
\MyVar\vTheta {\Theta}
\MyVar\vLambda {\Lambda}
\MyVar\vXi{\Xi}
\MyVar\vPi{\Pi}
\MyVar\vSigma {\Sigma}
\MyVar\vUpsilon {\Upsilon}
\MyVar\vPhi {\Phi}
\MyVar\vPsi {\Psi}
\MyVar\vOmega {\Omega}
\MyVar\RR {\mathbb{R}}
\MyVar\id {\operatorname{id}}
\MyVar\vell {\ell}
\MyVar\Orth{\operatorname{O}}
\MyVar\Domain{\operatorname{Dom}}
\MyVar\Aut{\operatorname{Aut}}
\MyVar\Hom{\operatorname{Hom}}
\MyVar\Map{\operatorname{Map}}
\MyVar\Fun{\operatorname{Fun}}
\MyVar\Isotropy{\operatorname{Stab}}
\MyVar\AdmissibleSetsOf{\mathcal{A}}
\MyVar\PhantomIsotropy{\operatorname{Ph}}
\MyVar\PhantomAdmissibleSet{\operatorname{PhA}}
\MyVar\Coinduced{\operatorname{N}}
\MyVar\QuasiCoinduced{\widetilde{\operatorname{N}}}
\MyVar\Realize{\mathcal{R}}
\MyVar\Image{\operatorname{im}}
\MyVar\TransferSystemsOf{\operatorname{Tr}}
\MyVar\VerticalEqualSet{\operatorname{E}}
\MyVar\OrbitType{\operatorname{T}}
\MyVar\SliceFactor{\operatorname{F}}
\MyVar\SubgroupLattice{\operatorname{Sub}}
\MyVar\HomotopyCategoryOf{\operatorname{Ho}}
\MyVar\EquivariantSetOf{\operatorname{Set}}
\MyVar\Forget{\operatorname{\shortdownarrow}}
\MyVar\EqualSet{\operatorname{E}}
\newcommand{\RightCosets}[2]{#1 \backslash #2}
\MyVar\Configurations{\operatorname{Conf}}
\MyVar\Interior{\operatorname{Int}}
\MyVar\IndexingSystemGeneratedBy{\mathbb{I}}
\MyVar\GraphSubgroupsGeneratedBy{\mathbb{G}}
\MyVar\Length{\operatorname{len}}
\MyVar\GraphOf{\operatorname{\Gamma}}
\MyVar\Shift{\operatorname{sh}}
\MyVar\Combine{\operatorname{c}}
\MyVar\ColumnType{\operatorname{T}}
\MyVar\SetFromGraph{\mathcal{S}}
\MyVar\FixedSubgroupsOf{\mathcal{F}}
\MyVar\CoefficientSystemOf{\mathcal{C}}
\MyVar\TransfersOf{\mathcal{T}}
\MyVar\GeneratedCoefficientSystemOf{\mathcal{C}}
\MyVar\TwistMaps{\operatorname{TMap}}
\newcommand{\Transfer}[1]{\xrightarrow[#1]{}}
\MyVar\LittleCubeOperad{\mathcal{C}}
\MyVar\LinearIsometriesOperad{\mathcal{L}}
\MyVar\SteinerOperad{\mathcal{K}}
\MyVar\EOperad{\mathbb{E}}
\MyVar\NOperad{\mathbb{N}}
\MyVar\OConstr{\vO[cal]}
\MyVar\Assoc{\operatorname{Assoc}}
\MyVar\BarrattEccles{\mathscr{E}}
\MyVar\DyadicBarrattEccles{\mathscr{DE}}
\MyVar\DyadicAssoc{\mathscr{D}\!\operatorname{Assoc}}
\MyVar\IncompleteIndexOperad{\mathbb{I}}
\MyVar\DyadicIntervalMonoid{\mathscr{D}}
\MyVar\FatDyadicIntervalMonoid{\mathscr{WD}}
\MyVar\LinearOrderOperad{\operatorname{Lin}}
\newcommand{\GSpaces}[1][G]{\operatorname{Top}^{#1}}
\newcommand{\Spaces}{\operatorname{Top}}
\newcommand{\Set}{\operatorname{Set}}
\newcommand{\GSets}[1][G]{\operatorname{Set}^{#1}}
\newcommand{\GFiniteSets}[1][G]{\operatorname{Set}_{fin}^{#1}}
\newcommand{\Poset}{\operatorname{Pos}}
\newcommand{\GPosets}[1][G]{\operatorname{Pos}^{#1}}
\newcommand{\SymSeq}{\operatorname{Sym}}
\newcommand{\GOperads}[1][G]{\operatorname{Oper}^{#1}}
\newcommand{\Operads}{\GOperads[]}
\newcommand{\OrbitCategory}[1][G]{\operatorname{O}_{#1}}
\MyVar\CategoryOfIntersectionMonoids{\operatorname{Mon}^{\vee}}
\MyVar\CategoryOfIntersectionOperads{\operatorname{Oper}^{\vee}}
\MyVar\CategoryOfFiniteSetInjective{\textbf{Inj}}
\MyVar\CategoryOfIndexingSystems{\operatorname{Ind}}
\MyVar\CategoryOfCategories{\operatorname{Cat}}
\newcommand{\edge}[2]{#1 \text{\textemdash} #2}
\MyVar\CompleteGraph{\mathcal{K}}
\SetupClass\MyVar{
define keys = {
  {cal}{command = \mathcal},
  {scr}{command = \mathscr},
  {indexsystem}{command = \mathscr},
  {~}{command = \widetilde},
  {finset}{command = \underline},
  {uline}{command = \underline},
  {oline}{command = \overline},
  {inv}{upper = -1},
  {vec}{command = \vec},
  {abs}{left spar ={\vert}, right spar ={\vert},spar},
  {graph}{command = \mathbf}
},
define keys[1] = {
  {pow}{upper = #1}
},
}
\NewSymbolClass\MyBinaryOperator [
\MyBinaryOperator\Tensor{\otimes}[
define keys ={
{der}{ Lder},
},
]
\MyBinaryOperator\Disjoint{\vee}
\MyBinaryOperator\Intersect{\wedge}
\MyBinaryOperator\StrictIntersect{\mathbin{\underline{\wedge}}}
\MyBinaryOperator\Asterick{\ast}
\MyBinaryOperator\VerticalSum{\mathbin{+}}
\begin{document}
\begin{abstract}
    In this paper, we present an explicit method to identify equivariant suboperads of coinduced operads that contain only fixed points associated to any desired transfer system. Our method works for a class of operads that we call intersection operads, which includes many familiar operads of interest, including the little $k$-cube operads, the Steiner operad, and the linear isometries operad. As an application, we also construct an intersection $\mathbb{E}_\infty$-operad that, when applying our construction, will produce a $\mathbb{N}_\infty$-operad realizing an arbitrary transfer system.
\end{abstract}
\maketitle
\section{Introduction}

Transfer and norm maps are essential algebraic structures in equivariant homotopy theory. These maps can be viewed as the extra structure that differentiates regular non-equivariant abelian groups and rings into ``genuine'' equivariant abelian groups and rings; more commonly known as Mackey and Tambara functors.

For the purposes of this paper, we will think of a norm map as a ``twisted multiplication'' in an equivariant $G$-category. To illustrate this view point, suppose we have a topological $G$-space $X$. If $X$ has the structure of a monoid in the category of topological $G$-spaces $\GSpaces$ with multiplication $\mu$, then $X$ comes packaged with a system of multiplication maps such as \[X\times X \times X \xrightarrow{\mu(\mu,\id)} X\] which are $G$-equivariant, where $G$ acts via the diagonal on the product $X\times X \times X$. However, in equivariant categories we also have ``twisted products'' or normed spaces \[N_K^HX = \prod_{H/K}i^\ast_KX.\] A norm map is then a $G$-equivariant map of the form \[G\times_HN_K^HX\to X.\]

Non-equivariantly, we can use the theory of $\mathbb{E}_\infty$-operads to capture a homotopy coherent system of multiplication maps on an object $X$; however, these operads can't encode norm maps. Blumberg-Hill \cite{blumbergOperadicMultiplicationsEquivariant2015} defined the notion of $\mathbb{N}_\infty$-operads as an extension of $\mathbb{E}_\infty$-operads that also encode a homotopy coherent system of norm maps via fixed points of the operad. 
An $\mathbb{N}_\infty$-operad may only encode a portion of all possible norm maps, and each possibility can be described by an algebraic object that \Blumberg-\Hill~call an indexing system. Rubin \cite{rubinDetectingSteinerLinear2021} and Balchin-Barnes-Roitzheim \cite{balchinNinftyoperadsAssociahedra2021} independently defined an equivalent notion called a transfer system which we will review in \cref{section: background} below. Transfer systems of $G$ form a lattice which we will denote by $\TransferSystemsOf{G}$. This assignment of a transfer system from an $\mathbb{N}_\infty$-operad is functorial, and in fact, turns out to be a homotopy invariant of $\mathbb{N}_\infty$-operads. \Blumberg-\Hill~proved we get a fully faithful functor of the form \begin{equation}\label{equation: Ninf homotopy to transfer}
    \TransfersOf:\HomotopyCategoryOf{\mathbb{N}_\infty\text{-operads}} \to  \TransferSystemsOf{G}. 
\end{equation}
They also conjectured this functor is essentially surjective, which has since been proven independently in three different ways \cite{gutierrezEncodingEquivariantCommutativity2018,bonventreGenuineEquivariantOperads2021,rubinCombinatorial$N_infty$Operads2021}. 

One application of the main result of this paper will be to give yet another proof of this conjecture; however, our focus will be more broad than these previous papers as we will not deal exclusively with $\mathbb{N}_\infty$-operads. The functor $\TransfersOf$ works more generally on (most) $\Sigma$-free $G$-operads, and we will say that a $G$-operad $P$ \emph{realizes} a transfer system $\tau$ if $\TransfersOf{P}=\tau$. The main result of this paper is a method to construct general $G$-operads that realize an arbitrary transfer system. Our approach will be explicit, and allow us to circumvent the use of heavier theoretical machinery such as cofibrant resolutions, bar or universal space constructions that the previous approaches employed.

The starting point for us is the following construction:
\begin{definition}
    Given a non-equivariant operad $P$, the coinduced operad $\Coinduced[e,pow=G]P$ is given on its components by \[\Coinduced[e,pow=G]P(n) = \Map{G,P(n)}.\] This is an operad since $\Map(G,-)$ is a right adjoint and so the defining operad diagrams commute with this functor.
\end{definition} 
If $P$ is $\Sigma$-free, one can show that $\Coinduced[e,pow=G]P$ realizes the complete transfer system of $G$. A natural idea to realize an arbitrary transfer system is to identify suboperads of coinduced operads $\Coinduced[e,pow=G]P$ that have the correct fixed points.

A counter example by Bonventre \cite[Example B.2.1.]{bonventreComparisonModelsEquivariant2017} shows that this approach is not always possible. Nevertheless, we will identify a class of operads, which we will call \emph{intersection operads}, where this approach does work. Moreover, many familiar operads such as the little $k$-cubes operad $\LittleCubeOperad[k]$, Steiner operads $\SteinerOperad_V$, and the linear isometry operad $\LinearIsometriesOperad$ belong to this class. The first main result of this paper is the following.

\begin{mainthm}\label{theorem A}
    For an intersection operad $P$ and transfer system $\tau$, there exists a $G$-suboperad $\Coinduced[e,pow=\tau,uline]{P}$ of $\Coinduced[e,pow=G]{P}$, which we will call the \emph{$\tau$-incomplete coinduction of $P$}, that realizes the transfer system $\tau$.
\end{mainthm}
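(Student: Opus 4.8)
The plan is to prove \cref{theorem A} by building the $G$-operad $\Coinduced[e,pow=\tau,uline]{P}$ by hand and then verifying, in two independent steps, that it is a suboperad of $\Coinduced[e,pow=G]{P}$ and that $\TransfersOf{\Coinduced[e,pow=\tau,uline]{P}}=\tau$. It helps to name at the outset what the component subspaces $\Coinduced[e,pow=\tau,uline]{P}(n)\subseteq\Map(G,P(n))$ must achieve: (i) they must be stable under the operad composition $\gamma$ and contain the unit, so as to assemble into a $G$-suboperad; and (ii) for the graph subgroup $\Gamma_{H,\phi}\le G\times\Sigma_n$ attached to a homomorphism $\phi\colon H\to\Sigma_n$ (equivalently, to a finite $H$-set $T$ on $n$ letters), the fixed space $\Coinduced[e,pow=\tau,uline]{P}(n)^{\Gamma_{H,\phi}}$ must be nonempty precisely when $T$ is admissible for $\tau$. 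The naive choice --- take $\Coinduced[e,pow=\tau,uline]{P}(n)$ to consist of those $f$ whose stabilizer in $G\times\Sigma_n$ is a $\tau$-admissible graph subgroup --- satisfies (ii) almost by inspection (using $\Sigma$-freeness of $P$ to see that every stabilizer is a graph subgroup), but it is not stable under $\gamma$, and by Bonventre's counterexample no variant of it can be for an arbitrary $\Sigma$-free $P$; this is exactly the point at which the hypothesis that $P$ is an intersection operad must be used.

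The construction repairs this by routing the defining condition through the meet operation $\wedge$ that an intersection operad places on each $P(n)$. Concretely, I would declare $f\colon G\to P(n)$ to be \emph{$\tau$-admissible} when every element of $P(n)$ obtained from the values $f(g)$, $g\in G$, by (iterated) meets has ``intersection type'' permitted by $\tau$, in the precise sense set up together with the definition of an intersection operad; this is a hereditary strengthening of the stabilizer condition above. Invariance under the $G\times\Sigma_n$-action is immediate, so each $\Coinduced[e,pow=\tau,uline]{P}(n)$ is a $G\times\Sigma_n$-subspace containing the unit. The reason for working through $\wedge$ is the composition step: in an intersection operad $\gamma$ is compatible with the meets in every slot, so any iterated meet of values of $\gamma(f;f_1,\dots,f_k)$ is built from iterated meets of values of $f$ and of the $f_i$; hence $\tau$-admissibility of the inputs propagates to the composite. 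On the combinatorial side this is the statement that the collection of $\tau$-admissible intersection types is closed under the substitution operation induced by $\gamma$ --- a repackaging of the fact that the indexing system generated by $\tau$ is closed under disjoint union and self-induction. Together with the (trivial) unit check this gives (i); and since $\Coinduced[e,pow=G]{P}$ is $\Sigma$-free, so is the suboperad, so $\TransfersOf$ is defined on it.

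For the transfer system I would first record the fixed-point computation in the ambient operad: for a suitable action convention, $f\colon G\to P(n)$ is fixed by $\Gamma_{H,\phi}$ if and only if $f(xh)=\phi(h)^{-1}\cdot f(x)$ for all $x\in G$, $h\in H$, so $f$ is freely prescribable on a set of coset representatives of $H$ in $G$ and its value on the rest of each coset is dictated by $\phi$; in particular $\Map(G,P(n))^{\Gamma_{H,\phi}}$ is a product of copies of $P(n)$ indexed by the cosets of $H$, which is nonempty and recovers the known fact that $\Coinduced[e,pow=G]{P}$ realizes the complete transfer system. For the inclusion $\tau\subseteq\TransfersOf{\Coinduced[e,pow=\tau,uline]{P}}$, given a $\tau$-admissible $H$-set $T$ I would assemble such a ``$\phi$-twisted tuple'' out of a single well-chosen point of $P(|T|)$ --- one whose intersection type matches the orbit decomposition of $T$ --- and check that the resulting $f$ satisfies the $\tau$-admissibility condition, using exactly that $T$ and its subobjects and restrictions lie in the indexing system of $\tau$; taking $T=H/K$ with $K\to_\tau H$ yields the required transfers. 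For the reverse inclusion I would run the hereditary condition backwards: a $\Gamma_{T}$-fixed element of $\Coinduced[e,pow=\tau,uline]{P}(|T|)$ produces, among the iterated meets of its values (which for a fixed point include the $\phi$-translates of a single value), an element whose intersection type encodes the orbit decomposition of $T$, so $\tau$-admissibility forces $T$ to be admissible for $\tau$; hence non-$\tau$-admissible graph subgroups have empty fixed spaces, and $\TransfersOf{\Coinduced[e,pow=\tau,uline]{P}}=\tau$.

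I expect the main obstacle to be precisely the design of the $\tau$-admissibility condition, which must thread two opposing requirements: it has to be permissive enough to survive composition by $\gamma$ --- equivalently, to track exactly the closure operations that generate the $\tau$-indexing system from the transfer relation --- yet rigid enough that every non-$\tau$-admissible graph subgroup ends up with empty fixed points. Bonventre's counterexample shows this cannot be done without genuinely consuming the intersection hypotheses, so the bulk of the work is isolating which features of an intersection operad --- the existence of the meets on each $P(n)$, together with their compatibility with $\gamma$ and with the $\Sigma_n$-action --- make the composition-closure step and the ``empty fixed points'' step go through at the same time, and then confirming that $\LittleCubeOperad[k]$, $\SteinerOperad_V$, and $\LinearIsometriesOperad$ are instances.
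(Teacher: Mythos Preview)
Your overall architecture---impose a $\tau$-admissibility condition on elements of the coinduced operad, show it is closed under composition, then verify the fixed points---matches the paper's. But the mechanism you propose rests on a misreading of the intersection structure. In the paper an intersection \emph{monoid} $(M,\wedge)$ carries only a symmetric reflexive \emph{relation} $\wedge$ on $M$ (``$x$ intersects $y$''), not a meet operation, and an intersection operad is one of the form $\Realize[pow=\set{1}]{M}$ for such an $M$; there is no binary operation on $P(n)$ producing new elements from old. Consequently ``every element of $P(n)$ obtained from the values $f(g)$ by iterated meets'' has no content, and your key closure step---``$\gamma$ is compatible with the meets in every slot, so any iterated meet of values of the composite is built from iterated meets of the inputs''---does not parse.

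What the paper actually does is record, for $x\colon G\times\vn[finset]\to M$, the \emph{intersection graph} $\vp[graph](x)$ on vertex set $G\times\vn[finset]$ with an edge wherever two components are $\wedge$-related; $\tau$-admissibility then demands that every complete subgraph of a certain coset shape (a ``twist map'') exhibit only transfers already in $\tau$. The point your proposal misses---and it is exactly where Bonventre-type failures enter---is that this condition is \emph{not} closed under $\circ_i$ on its own. One must additionally impose the \emph{strict columns} condition (for each $i$ and $g_1,g_2\in G$, either $x(g_1,i)=x(g_2,i)$ or $x(g_1,i)\vee x(g_2,i)$); \cref{appendix:strict columns} gives an explicit counterexample showing the $\tau$-admissible locus is not a suboperad without it, already for $M=\LittleCubeOperad[1](1)$. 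With strict columns in place, closure is established not by a direct computation but by exhibiting $\vp[graph]$ as a \emph{lax} morphism to a poset-valued indexing operad $\IncompleteIndexOperad[pow=G]$ and pulling back the lower suboperad $\IncompleteIndexOperad[pow=\tau]$; the strict-column hypothesis is precisely what makes $\vp[graph]$ lax (\cref{lemma: projection onto graphs is lax}). Your fixed-point analysis for $\tau\subseteq\TransfersOf{\Coinduced[e,pow=\tau,uline]{P}}$ is close in spirit to the paper's $\Psi_\Gamma$ construction, but the reverse inclusion again runs through the intersection-graph picture rather than through nonexistent meets.
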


Given this result, it is possible to then use these operads to construct $\mathbb{N}_\infty$-operads by following the approach taken by Rubin in \cite{rubinCombinatorial$N_infty$Operads2021}.
There is a functor $E$ that takes a $G$-operad in sets (with correct fixed points) to an $\mathbb{N}_\infty$-operad with the same fixed points. 
We could then realize a transfer system $\tau$ as an $\mathbb{N}_\infty$-operad as the composite $E\Coinduced[e,pow=\tau,uline]{P}$. 
The second main result of this paper shows that this is unnecessary. 

\begin{mainthm}\label{theorem B}
    Let $\tau$ be an arbitrary transfer system. There exists an intersection $\mathbb{E}_\infty$-operad $\DyadicBarrattEccles$, which we call the \emph{dyadic Barratt-Eccles operad}, such that $\Coinduced[pow=\tau,e,uline]{\DyadicBarrattEccles}$ is an $\mathbb{N}_\infty$-operad that realizes the transfer system $\tau$.
\end{mainthm}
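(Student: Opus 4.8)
The plan is to realize $\DyadicBarrattEccles$ as a Barratt--Eccles-type construction applied to a dyadic, set-theoretic model of the associative operad, and then to exploit that this construction preserves limits, so that it commutes both with coinduction and with the passage to $\tau$-incomplete suboperads of \cref{theorem A}. First I would build an operad $\DyadicAssoc$ in sets whose arity-$n$ operations are configurations of $n$ labelled, pairwise disjoint dyadic subintervals of $[0,1]$ (the dyadic data encoded by the monoid $\DyadicIntervalMonoid$), with operadic composition given by rescaling nested dyadic intervals --- this is well defined because an affine map carrying a dyadic interval onto $[0,1]$ sends dyadic subintervals to dyadic subintervals. Two features of this model are the load-bearing ones: restricting endpoints to be dyadic forces any two intervals taken from two different configurations to be nested or disjoint, so two configurations admit a componentwise intersection which is again such a configuration, and this supplies the partial intersection operation; and since the labelling by $\{1,\dots,n\}$ is recorded, the $\Sigma_n$-action is free. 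The first task is thus to verify that $\DyadicAssoc$ is a $\Sigma$-free intersection operad in sets with each $\DyadicAssoc(n)$ nonempty, together with the evident forgetful map $\DyadicAssoc\to\Assoc$ remembering only the left-to-right order.

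Next, define $\DyadicBarrattEccles$ to be the Barratt--Eccles construction on $\DyadicAssoc$: apply levelwise the functor $E$ sending a set $S$ to the nerve of its chaotic groupoid, $E(S)_k = S^{k+1}$. As $E$ preserves finite products it transports the operad structure and the partial intersection of $\DyadicAssoc$ to $\DyadicBarrattEccles$, precisely as the classical Barratt--Eccles operad $\BarrattEccles = E(\Assoc)$ is built from $\Assoc$. I would then record: $\DyadicBarrattEccles$ is an $\mathbb{E}_\infty$-operad, since each $\DyadicBarrattEccles(n) = E\big(\DyadicAssoc(n)\big)$ is contractible (the nerve of a chaotic groupoid on a nonempty set), the $\Sigma_n$-action on it is free ($E$ of a free $\Sigma_n$-set is a free $\Sigma_n$-space), and $\DyadicBarrattEccles(0) = \ast$; the induced map $\DyadicBarrattEccles \to \BarrattEccles$ is an equivalence of operads (a map between contractible $\Sigma$-free operads), which is what justifies the name; and $\DyadicBarrattEccles$ is an intersection operad, because the intersection-operad axioms are equational identities between (partial) operad maps and hence are preserved by the limit-preserving functor $E$.

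With $\DyadicBarrattEccles$ in hand, \cref{theorem A} immediately produces the $G$-suboperad $\Coinduced[e,pow=\tau,uline]{\DyadicBarrattEccles}$ of $\Coinduced[e,pow=G]{\DyadicBarrattEccles}$ realizing $\tau$, and it remains only to upgrade ``realizes $\tau$'' to ``is an $\mathbb{N}_\infty$-operad''. The crucial point is that the whole construction commutes with $E$: the functor $\Map(G,-)$ commutes with $E$ (both are assembled from products), and --- inspecting the proof of \cref{theorem A} --- the $\tau$-incomplete suboperad is cut out of the coinduction by a limit-type condition on set-level data, which $E$ preserves since it is the nerve of the (limit-preserving) chaotic-groupoid functor. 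Hence $\Coinduced[e,pow=\tau,uline]{\DyadicBarrattEccles} = E\big(\Coinduced[e,pow=\tau,uline]{\DyadicAssoc}\big)$, where $\Coinduced[e,pow=\tau,uline]{\DyadicAssoc}$ is the $\tau$-incomplete coinduction of the set-level intersection operad $\DyadicAssoc$. By \cref{theorem A} applied to $\DyadicAssoc$, this is a $\Sigma$-free $G$-operad in sets whose fixed-point sets are nonempty precisely on the graph subgroups in the family associated to $\tau$. Since fixed points also commute with $E$, for a graph subgroup $\Gamma \le G\times\Sigma_n$ one gets $\Coinduced[e,pow=\tau,uline]{\DyadicBarrattEccles}(n)^\Gamma = E\big(\Coinduced[e,pow=\tau,uline]{\DyadicAssoc}(n)^\Gamma\big)$, which is contractible when the underlying set is nonempty and empty otherwise; so each $\Coinduced[e,pow=\tau,uline]{\DyadicBarrattEccles}(n)$ is a universal space for the family of $\tau$, $\Sigma$-freeness and $G$-contractibility of the arity-$0$ term are inherited, and the Blumberg--Hill recognition theorem identifies $\Coinduced[e,pow=\tau,uline]{\DyadicBarrattEccles}$ as an $\mathbb{N}_\infty$-operad realizing $\tau$. (Equivalently: $\Coinduced[e,pow=\tau,uline]{\DyadicBarrattEccles}$ is the image of the set-level operad $\Coinduced[e,pow=\tau,uline]{\DyadicAssoc}$, which by \cref{theorem A} has the correct fixed points, under the functor $E$ of \cite{rubinCombinatorial$N_infty$Operads2021}.)

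I expect the principal obstacle to be arranging the dyadic model $\DyadicAssoc$ so that it is simultaneously $\Sigma$-free, equivalent (after applying $E$) to $\Assoc$, and an \emph{honest} intersection operad: plain linear orders deliver the first two but carry no sensible intersection, so care is required so that componentwise intersections of configurations always land back in the operad --- in particular in the degenerate cases where intervals share endpoints, which may force one to pass to a ``fat'' variant such as $\FatDyadicIntervalMonoid$ --- and so that all the intersection-operad axioms hold strictly rather than only up to rescaling. The second, more bookkeeping-heavy, obstacle is the compatibility statement $\Coinduced[e,pow=\tau,uline]{E(-)} \cong E\big(\Coinduced[e,pow=\tau,uline]{(-)}\big)$: one must return to the construction of the $\tau$-incomplete coinduction in the proof of \cref{theorem A} and check that each defining condition is a limit of set-level data, so that it is reproduced unchanged after applying the product- and limit-preserving functor $E$.
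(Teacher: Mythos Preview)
Your approach differs substantially from the paper's, and the central commutation claim has a genuine gap.

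The paper does not apply the Barratt--Eccles functor $E$ to the set-level operad $\DyadicAssoc$. Instead it fattens the \emph{monoid}: it builds a contractible topological intersection monoid $\FatDyadicIntervalMonoid$ as the coend $\int^{\underline{n}}\DyadicIntervalMonoid_n\times I^n$ (each letter of a dyadic word carries a weight in $[0,1]$, with weight $0$ deleting that letter), sets $\DyadicBarrattEccles=\Realize[pow=\set{1}]{\FatDyadicIntervalMonoid}$, and then proves contractibility of each fixed-point space $\Realize[pow=\tau]{\FatDyadicIntervalMonoid}(n)^\Gamma$ by an explicit homotopy that prepends a chosen $\Gamma$-invariant configuration with growing weights while shrinking the original weights. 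The point, stressed just before the theorem, is that in $\FatDyadicIntervalMonoid$ one can continuously separate two equal entries into disjoint ones \emph{without} passing through a stage where they intersect but are unequal, so the homotopy stays inside the strict-column locus throughout. No commutation with $E$ is invoked anywhere.

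Your compatibility statement $\Coinduced[e,pow=\tau,uline]{E(-)}\cong E\big(\Coinduced[e,pow=\tau,uline]{(-)}\big)$ is not merely bookkeeping; it fails. The conditions cutting out $\Realize[pow=\tau]{M}$ inside $\OConstr[pow=G]{M}$ are not limit conditions: the strict-column requirement asks each pair $\big(x(g_1,i),x(g_2,i)\big)$ to lie in the \emph{union} of the diagonal and the disjoint relation, and the supports-only-$\tau$-transfers requirement is phrased through the \emph{complement} $\Intersect$ of $\Disjoint$. The functor $E$ preserves finite products and monomorphisms but not unions or complements. Concretely, with the natural relation on $E(\DyadicIntervalMonoid)$ (simplicially, $(w_\ell)_\ell\Disjoint_E(w'_\ell)_\ell$ iff $w_\ell\Disjoint w'_\ell$ for every $\ell$), a $1$-simplex $(x_0,x_1)$ in which $x_0(g_1,i)=x_0(g_2,i)$ but $x_1(g_1,i)\Disjoint x_1(g_2,i)$ has each coordinate $x_\ell$ strict-columned yet violates the $E$-level strict-column condition, since at the $E$-level the pair is neither equal nor $\Disjoint_E$. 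One only obtains the inclusion $\Realize[pow=\tau]{E(\DyadicIntervalMonoid)}(n)\subseteq E\big(\Realize[pow=\tau]{\DyadicIntervalMonoid}(n)\big)$, and knowing that the $\Gamma$-fixed points sit inside a contractible space does not make them contractible. (A smaller mismatch: in this paper an intersection operad is by definition in the image of $\Realize[pow=\set{1}]$ applied to a monoid with an intersection \emph{relation}; there is no partial meet \emph{operation} on configurations, so your ``componentwise intersection'' is extra structure that the framework neither requires nor uses.)
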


\subsection{Organization of the paper}
We will start this paper in \cref{section: background} with relevant background on how fixed points of operads encode norm maps, and the equivalence between indexing and transfer systems. This will serve to introduce needed notation, but also explain how this equivalence works broadly for $G$-operads and not just $\mathbb{N}_\infty$-operads as is the focus of the current literature. We will also end this section with examples of what fixed points of coinduced operads look like, and motivate our general construction. i.e., we will illustrate why we focus on the idea of ``intersection'' in our operads, and how this will lead us to be able to successfully control what fixed points can appear when we take suboperads.

In \cref{section: intersection monoids and operads} we will define intersection monoids and operads. In essence, an intersection operad $P$ is completely determined by its unary monoid $P(1)$ which comes equipped with a notion of ``intersecting elements''. Once we have defined these, we will define a $G$-poset valued operad called the $\tau$-incomplete indexing operad $\IncompleteIndexOperad[pow=\tau]$ in \cref{section: indexing operad} which will be used to keep track of how elements in an intersection operad might self-intersect, and hence serve as a tool to control what possible fixed points can appear. We will then use $\IncompleteIndexOperad[pow=\tau]$ in \cref{section: realizing transfer systems} to construct the $\tau$-incomplete coinduction functor $\Coinduced[e,pow=\tau,uline]$ on intersection operads. 

Finally, in \cref{section: realizing ninf} we will construct the dyadic Barratt-Eccles operad $\DyadicBarrattEccles$ and prove that $\Coinduced[pow=\tau,e,uline]{\DyadicBarrattEccles}$ is an $\mathbb{N}_\infty$-operad that realizes the transfer system $\tau$.

\subsection{Acknowledgements}
The original seeds of this paper were a part of the author's thesis, and we would like to thank Mike Hill for his support and guidance during that time.
We would also like to thank the members of the author's group in the AMS Mathematical Research Community session on homotopical combinatorics: David Chan, Myungsin Cho, David Mehrle, Pablo S. Ocal, Ang\'{e}lica Osorno, and Paula Verdugo. Their interest in this work gave the author the motivation to finish this paper.

\subsection{Notation and Conventions}
We write $G$ for a finite group, and $K\leq H \leq G$ for a selection of subgroups. 
Given $g\in G$, we write conjugate subgroups as $H^g=\vg[inv]Hg$. 
For a left $G$-set $T$, we denote the set of orbits by $\RightCosets{G}{T}$. 

$\GSpaces[]$ is the category of compactly generated weak Hausdorff spaces. A $G$-category is a functor category of the form $\Fun{G,\mathcal{C}}$ where we view $G$ as a category with one object, and $\mathcal{C}$ is a category. In this paper, we will use the following $G$-categories: the category of $G$-spaces $\GSpaces$; the category of $G$-posets $\GPosets$; the category of $G$-sets $\GSets$; and the category of finite $G$-sets $\GFiniteSets$. Moreover, we use the symmetric monoidal structure given by products on all of these.

For a category $\mathcal{C}$, we denote the category of \emph{left} symmetric sequences in $\mathcal{C}$ by $\SymSeq(\mathcal{C})$. We use left symmetric sequences so that if $\mathcal{C}$ is a $G$-category, and $P\in \SymSeq(\mathcal{C})$, then the level components $P(n)$ have a left $G\times \Sigma_n$-action. We write $\Operads(\mathcal{C})$ for the category of (left) symmetric operads in $\mathcal{C}$. A morphism of $\GSpaces$-operads $\phi: P\to Q$ is a (weak) homotopy equivalence if for each $n$, the component map $\phi(n):P(n)\to Q(n)$ is a (weak) $G\times \Sigma_n$-homotopy equivalence. We will always use left actions in this paper. This will cause a slightly different convention than what is commonly used when dealing with mapping spaces. Given a left $\Sigma_n$-space $X$, we will view the mapping spaces $\Map(G,X)$ as a left $G\times \Sigma_n$-space where the action is given by $\left[(g,\sigma)\cdot f\right](h) = \sigma f(\vg[inv]h).$ Note the left action via the inverse on the domain.

Given $n\in \mathbb{N}$, we will write $\vn[finset]$ for the finite set $\set{1,2,\dots,n}$, where $\underline{0}=\emptyset$. We will primarily use Markl's formulation of operads (see for instance \cite{marklOperadsPROPs2008}) in terms of $\circ_i$-compositions. An operad $P$ in a  symmetric monoidal category $\vC[cal]$ is then given by a collection of morphisms \begin{align*}
    \circ_i: P(n)\times P(m) \to P(n+m-1)
\end{align*} for each $i\in \vn[finset]$ that satisfy unital, associativity, and equivariance conditions. For convenience, we will record these conditions here. Since we will only work with categories whose objects have underlying sets, we will state these conditions in equational form.

The unital condition says that there exists a unit $u\in P(1)$ such that for all $x\in P(n)$, and $i\in \vn[finset]$ we have \begin{gather}\label{eq:operad_unitality}
x\circ_i u = x \text{ and } u\circ_1 x = x.    
\end{gather}

The associativity condition then says that for $x\in P(n)$, $y\in P(m)$, $z\in P(\ell)$ and $i\in\vn[finset]$, $j\in \underline{n+m-1}$ we have 
\begin{gather}\label{eq:operad_associativity}
    (x\circ_i y)\circ_j z = \begin{cases*}
        (x \circ_j z) \circ_{i+\ell-1} y & if $1\leq j < i$, \\ 
        x\circ_i (y \circ_{j-i+1} z) & if $i\leq j \leq i+m-1$, \\
        (x\circ_{j-m+1} z) \circ_{i} y & if $i+m\leq j \leq n+m-1$.  
    \end{cases*}
\end{gather}

Given permutations $\sigma\in \Sigma_n$, and $\tau\in \Sigma_m$, the permutation $\sigma\circ_i\tau$ is given by 
    \[(\sigma\circ_i\tau)(k) = \begin{cases*}
        \sigma(k) & if $\sigma(k)<\sigma(i)$ and $k<i$, \\ 
        \sigma(k)+m-1 & if $\sigma(k)\geq\sigma(i)$ and $k<i$, \\ 
        \sigma(k-m+1) & if $\sigma(k)<\sigma(i)$ and $k > i+m-1$, \\ 
        \sigma(k-m+1) + m - 1 & if $\sigma(k)\geq \sigma(i)$ and $k > i+m-1$, \\ 
        \tau(k-i+1) + \sigma(i) - 1 & if $k\in \set{i,i+1,\dots,i+m-1}$.
    \end{cases*}\]
i.e., $\sigma\circ_i\tau$ is the permutation where we expand the $i$-th position of $\sigma$ out to $m$ elements and apply $\tau$. The equivariance condition is then
\begin{gather}
(\sigma\cdot \vx)\circ_{\sigma(i)} (\tau \cdot \vy) = (\sigma\circ_i\tau)\cdot (\vx\circ_i \vy). \label{eq:operad_equivariance}
\end{gather} Note that this condition is slightly different to that in \cite[Definition 11]{marklOperadsPROPs2008} since our $\Sigma_n$-actions are on the left.

\section{Background and Motivation}\label{section: background}

\subsection{Norm Maps and Transfers}

A $G$-operad encodes norm maps via its fixed points. As an illustration, suppose we have a operad $P\in\Operads(\GSpaces)$. Denote a generator of $C_3$ by $\tau$ and consider the subgroup of $C_3\times \Sigma_3$ generated by $(\tau,(1,2,3))$, which we denote by $\Gamma$. 
Note that $\Gamma$ is the graph of the homomorphism $\phi:C_3\to \Sigma_3$ determined by $\tau\mapsto (1,2,3)$.
If $X\in \GSpaces[C_3]$ is a $P$-algebra, then we have a $C_3\times \Sigma_3$-equivariant map \[P(3)\to \Hom{X^3,X}\] where, on the right, $C_3$ acts by conjugation and $\Sigma_3$ acts by permuting factors. Note, that here we are using that $\GSpaces$ is enriched over itself and $\Hom{X^3,X}$ is the space of all continuous maps. Taking fixed points we get that \[P(3)^\Gamma\to \Hom{X^3,X}[pow=\Gamma]\] and unpacking the condition on an element $f\in \Hom{X^3,X}[pow=\Gamma]$,we see that this is a continuous map $f:X^3\to X$ such that \begin{align*}
    ((\tau,(1,2,3))\cdot f)(x_1,x_2,x_3) &= f(x_1,x_2,x_3) \\
    \tau f((\vtau[inv],(1,2,3)^{-1})(x_1, x_2,x_3)) &= f(x_1,x_2,x_3), \shortintertext{which is equivalently}
    f\left((\tau,(123))\cdot(x_1,x_2,x_2)\right)&=\tau f(x_1,x_2,x_3).
\end{align*} This means that $f$ is a $C_3$-equivariant map \[f: \prod_{C_3}X\to X.\] i.e., a norm map. We therefore see that the fixed points $P(3)^\Gamma$ parameterize a family of norm maps of the form $\prod_{C_3}X\to X$.

Using this observation, \Blumberg-\Hill~extended $\mathbb{E}_\infty$-operads to include norm data by defining $\mathbb{N}_\infty$-operads as follows.
\begin{definition}[\protect{\cite[Definition 3.7]{blumbergOperadicMultiplicationsEquivariant2015}}]\label{definition:Ninf operads}
    An $\mathbb{N}_\infty$-operad $\mathcal{O}$ if an operad valued in $\GSpaces$ such that 
    \begin{enumerate}
        \item The space $\mathcal{O}(0)$ is contractible,
        \item The action of $\Sigma_n$ if free on $\mathcal{O}(n)$, and 
        \item\label[condition]{definition: Ninf condition three} $\mathcal{O}(n)$ is a universal space for a family $\mathcal{F}_n(\mathcal{O})$ of subgroups of $G\times \Sigma_n$ which contains all subgroups of the form $H\times \set{1}$. 
    \end{enumerate}
\end{definition}
Not every collection of family subgroups $\mathcal{F}_n(\mathcal{O})$ are possible. Because we require that $\Sigma_n$ act freely, each subgroup $\Gamma\in \mathcal{F}_n(\mathcal{O})$ must be a \emph{graph subgroup}. A graph subgroup $\Gamma \subseteq G\times \Sigma_n$ is a subgroup such that there is some subgroup $H \leq G$, and group homomorphism $\phi : H\to \Sigma_n$ such that \[\Gamma = \set{(h,\phi(\vh))\in G\times \Sigma_n\given h\in H}.\] Moreover, a group homomorphism $\phi:H\to \Sigma_n$ determines an $H$-set structure on $\vn[finset]$.

\begin{notation}
    Given a graph subgroup $\Gamma \subseteq G\times \Sigma_n$, we will use the following notation:
    \begin{enumerate}
        \item $H_\Gamma = \operatorname{pr}_{G}\Gamma$, the projection onto the $G$ component.
        \item $\phi_\Gamma$ for the corresponding homomorphism $H_\Gamma\to \Sigma_n$.
        \item $\SetFromGraph[\Gamma]$ for the corresponding $H_\Gamma$-set with underlying set $\vn[finset]$.
    \end{enumerate}
\end{notation}

Given a graph subgroup $\Gamma\in \mathcal{F}_n(\mathcal{O})$ is a graph subgroup, a map $f\in \mathcal{O}(n)^\Gamma$ corresponds to a norm map of the form \[f: G\times_H\prod_{\SetFromGraph[\Gamma]}X\to X\] on any $\mathcal{O}$-algebra $X$. Hence, \cref{definition: Ninf condition three} in \Cref{definition:Ninf operads} can be interpreted as saying that the space of all norm maps that an operad encodes is homotopy coherent. The family of graph subgroups $\set{\mathcal{F}_n(\mathcal{O})}_n$ is establishing exactly which norm maps an $\mathbb{N}_\infty$-operad encodes.

\subsection{Indexing and Transfer Systems}

The operadic structure of an $\mathbb{N}_\infty$-operad $\mathcal{O}$ further restricts the possibilities for the families $\set{\mathcal{F}_n(\mathcal{O})}_n$. In \cite{blumbergOperadicMultiplicationsEquivariant2015}, Blumberg and Hill define the notion of an \emph{indexing system} to capture the combinatorial properties that this sequence of families must satisfy. 

Denote the orbit category of $G$ by $\OrbitCategory$. We have a functor $\underline{\Set}_{fin}:\OrbitCategory^{op}\to \operatorname{Cat}$ where $\underline{\Set}_{fin}(G/H)= \Set_{fin}^H$ which Blumberg-Hill call the symmetric monoidal coefficient system of finite sets \cite[Definition 3.9]{blumbergOperadicMultiplicationsEquivariant2015}. 

\begin{definition}[\protect{\cite[Definition 3.22]{blumbergOperadicMultiplicationsEquivariant2015}}]
A $G$-indexing system  is a subfunctor $F$ of $\underline{\Set}_{fin}$ such that the following conditions hold:
\begin{enumerate}
    \item The subcategories $F(G/H)$ are closed under disjoint unions and cartesian products;
    \item If we have an inclusion of $H$-sets $X \hookrightarrow Y$ and $Y\in F(G/H)$, then $X\in F(G/H)$; and 
    \item If $X\in F(G/K)$ and $H/K\in F(G/K)$, then $H\times_K X\in F(G/H)$. This condition is referred to as self-induction.
\end{enumerate}
We will denote the category of $G$-indexing systems by $\CategoryOfIndexingSystems{G}$, where morphisms are inclusions of functors.
\end{definition}

Given a $H$-set $T$, we will denote the action homomorphism by $\phi_T:H\to \Sigma_T$ and the graph subgroup by $\Gamma_T \subseteq G\times \Sigma_T$. A $H$-set $T$ is \emph{admissible} for a $G$-operad $P$ if, after some ordering on the elements of $T\cong \vn[finset]$, the graph subgroup $\Gamma_T$ that corresponds to $T$ is such that $P(n)^{\Gamma_T}\neq \emptyset$. Phrased differently, a $H$-set $T$ is admissible for $P$ if $P$-algebras have norm maps of the form determined by $T$.
We can define a subfunctor $\AdmissibleSetsOf{P}$ of $\underline{\Set}_{fin}$ where $\AdmissibleSetsOf{P}(G/H)$ is the subcategory of $\Set^H_{fin}$ of all $H$-sets admissible for the operad $P$. 

In general, $\AdmissibleSetsOf{P}$ may not define an indexing system for a general $G$-operad $P$. Since we will be building operads that aren't necessarily $\mathbb{N}_\infty$-operads, let us clarify under what conditions $\AdmissibleSetsOf{P}$ gives an indexing system. 

Expanding on notation previously used, given a general $G$-operad $P$, let us denote the family of subgroups which fix points as \[\FixedSubgroupsOf[n]{P}:=\set[\bigg]{\Phi \leq G\times\Sigma_n\given P(n)^\Phi\neq \emptyset}.\] 
First, observe that we require $P$ to be $\Sigma$-free to ensure that every subgroup in $\FixedSubgroupsOf[n]{P}$ is a graph subgroup, and so is represented by an element of $\AdmissibleSetsOf{P}$. i.e., $\Sigma$-free ensures that the subfunctor $\AdmissibleSetsOf{P}$ encodes the same data as the sequence of subgroup families $\set{\FixedSubgroupsOf[n]{P}}_n$. 
\begin{lemma}
    If $P$ is $\Sigma$-free, then every element of $\FixedSubgroupsOf[n]{P}$ is represented by an element of $\AdmissibleSetsOf{P}$.
\end{lemma}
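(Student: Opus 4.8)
The plan is to use $\Sigma$-freeness to force every $\Phi \in \FixedSubgroupsOf[n]{P}$ to intersect $\set{1}\times\Sigma_n$ trivially; such a subgroup is automatically a graph subgroup, and the $H$-set it determines is admissible for $P$ essentially by unwinding definitions.

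First I would fix $\Phi \in \FixedSubgroupsOf[n]{P}$ together with a point $x \in P(n)^\Phi$. If $\Phi$ contained an element $(1,\sigma)$ with $\sigma \neq 1$, then $x$ would be a $\sigma$-fixed point for the $\Sigma_n$-action on $P(n)$, contradicting that this action is free. Hence $\Phi \cap (\set{1}\times\Sigma_n) = \set{1}$, which is precisely the statement that the projection $\operatorname{pr}_G\colon \Phi \to G$ is injective. Its image $H_\Phi = \operatorname{pr}_G\Phi$ is therefore isomorphic to $\Phi$, and composing the inverse isomorphism with $\operatorname{pr}_{\Sigma_n}$ exhibits $\Phi$ as the graph subgroup determined by a homomorphism $\phi_\Phi\colon H_\Phi \to \Sigma_n$. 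Let $T := \SetFromGraph[\Phi]$ be the resulting $H_\Phi$-set, i.e. the set $\vn[finset]$ with $H_\Phi$ acting through $\phi_\Phi$.

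It then remains to check that $T$ is admissible for $P$, which is immediate: under the tautological ordering identifying $T$ with $\vn[finset]$, the graph subgroup $\Gamma_T$ attached to $T$ is literally $\Phi$, and $P(n)^{\Gamma_T} = P(n)^\Phi$ contains $x$, hence is nonempty. Therefore $T \in \AdmissibleSetsOf{P}(G/H_\Phi)$, and this is the element of $\AdmissibleSetsOf{P}$ representing $\Phi$. I do not anticipate any real obstacle: the only points needing care are (a) making it explicit that ``$\Sigma$-free'' means each $\Sigma_n$-action on $P(n)$ is free, so that nontrivial isotropy inside $\set{1}\times\Sigma_n$ is impossible, and (b) matching conventions so that ``represented by an element of $\AdmissibleSetsOf{P}$'' is read as ``the graph subgroup attached to that $H$-set, after the evident ordering, equals $\Phi$''. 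One could additionally remark that $\FixedSubgroupsOf[n]{P}$ is closed under $\Sigma_n$-conjugation, making the choice of ordering in the definition of admissibility immaterial; this is the observation that $\AdmissibleSetsOf{P}$ and $\set{\FixedSubgroupsOf[n]{P}}_n$ carry the same data, as claimed just before the lemma.
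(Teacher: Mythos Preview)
Your argument is correct and is exactly the standard one. The paper does not give a formal proof of this lemma; it is stated without proof, with the key idea sketched in the sentence immediately preceding it (``we require $P$ to be $\Sigma$-free to ensure that every subgroup in $\FixedSubgroupsOf[n]{P}$ is a graph subgroup, and so is represented by an element of $\AdmissibleSetsOf{P}$''). Your write-up simply fills in the details of that sketch: $\Sigma$-freeness forces $\Phi\cap(\{1\}\times\Sigma_n)$ to be trivial, hence $\Phi$ is a graph subgroup, and the associated $H_\Phi$-set is admissible because $P(n)^\Phi\neq\emptyset$ by hypothesis.
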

Looking at the proofs of \cite[Lemma 4.10, Lemma 4.11 and Lemma 4.12]{blumbergOperadicMultiplicationsEquivariant2015}, where Blumberg-Hill prove that $\AdmissibleSetsOf{P}$ is an indexing system when $P$ is an $\mathbb{N}_\infty$-operad, observe that statements about contractability are only needed to show certain fixed points spaces are non-empty. In particular, the only condition we need to ensure $\AdmissibleSetsOf{P}$ is an indexing system is the following.

\begin{restatable}{lemma}{sufficientforindexing}
    For a $G$-operad $P$, if for all $H\leq G$ we have $H\times \set{\id}\in \FixedSubgroupsOf[2]{P}$, then $\AdmissibleSetsOf{P}$ is an indexing system.
\end{restatable}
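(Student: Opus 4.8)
The plan is to verify the three indexing-system axioms for $\AdmissibleSetsOf{P}$ one at a time, keeping intact the combinatorial skeleton of \cite[Lemmas 4.10--4.12]{blumbergOperadicMultiplicationsEquivariant2015}. Those arguments are purely operadic, and — as the discussion above notes — the only role played there by contractibility of the spaces $\mathcal{O}(n)$ is to certify that certain fixed-point sets $P(n)^{\Gamma}$ are nonempty. So the strategy is to discharge each such nonemptiness claim by hand, assembling an explicit fixed point out of $\circ_i$-compositions and using the equivariance identity \eqref{eq:operad_equivariance} to check that the assembled element has the required stabilizer. That $\AdmissibleSetsOf{P}$ is a subfunctor of $\underline{\Set}_{fin}$ needs nothing new: if $T$ is an admissible $H$-set with $|T| = n$ and $K\leq H$, then the graph subgroup of $\operatorname{Res}^H_K T$ is $\Gamma_T\cap(K\times\Sigma_n)\leq\Gamma_T$, so $P(n)^{\Gamma_T\cap(K\times\Sigma_n)}\supseteq P(n)^{\Gamma_T}\neq\emptyset$, and conjugation is handled identically.

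The first step is to harvest the one genuinely new input. The hypothesis gives, for each $H\leq G$, an element $b_H\in P(2)^{H\times\{\id\}}$; iterating $\circ_i$-compositions of $b_H$ with itself and applying \eqref{eq:operad_equivariance} with identity permutations produces, for every $n\geq 1$, an element $b_H^{(n)}\in P(n)^{H\times\{\id\}}$. Equivalently, every trivial finite $H$-set is admissible. (For the empty $H$-set one uses a $G$-fixed point of $P(0)$, which is available for the operads considered here, where $P(0)$ is a point.) These ``branching'' elements $b_H^{(n)}$ are precisely what replaces the appeals to contractibility.

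With the $b_H^{(n)}$ in hand, closure under disjoint unions and Cartesian products is an explicit computation. Fix admissible $H$-sets $S$ and $T$, write $m=|S|$, $k=|T|$, and choose $f\in P(m)^{\Gamma_S}$, $g\in P(k)^{\Gamma_T}$. For the disjoint union, the composite $(b_H\circ_1 f)\circ_{m+1}g\in P(m+k)$ is fixed by $\Gamma_{S\sqcup T}$: the permutation that $h\in H$ induces on $S\sqcup T$ is $\phi_S(h)$ on the first $m$ letters and $\phi_T(h)$ on the last $k$, which is exactly the permutation obtained by feeding $\phi_S(h)\cdot f$ and $\phi_T(h)\cdot g$ into the two (trivially permuted) inputs of $b_H$, so \eqref{eq:operad_equivariance} gives the claim. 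For the Cartesian product, insert a copy of $g$ into every slot of $f$ by iterated $\circ_i$-compositions to get an element of $P(mk)$; because the inner operations are all equal to $g$, the $\Sigma_{mk}$-action on this element recorded by \eqref{eq:operad_equivariance} is the iterated wreath permutation $\phi_S(h)\wr\phi_T(h)$, which is the $H$-action on $S\times T$, so the element is $\Gamma_{S\times T}$-fixed. Closure under subobjects then reduces to: given $X\hookrightarrow Y$ with $Y$ admissible, write $Y\cong X\sqcup Z$ as $H$-sets, start from $f\in P(|Y|)^{\Gamma_Y}$, and compose into each of the $|Z|$ slots indexed by $Z$ a $G$-fixed point of $P(0)$; the result lies in $P(|X|)$, and since $Z$ is an $H$-subset the residual structure on the remaining slots is $\Gamma_X$, so \eqref{eq:operad_equivariance} shows $X$ is admissible.

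The step I expect to be the main obstacle is self-induction. Here the ``input'' group $K$ and the ``output'' group $H$ differ, so a $\Gamma$-fixed point cannot be built merely by composing $K$-fixed data: given a fixed point $j$ realizing $H/K$ and a fixed point $f$ realizing a $K$-set $X$, one must show that the iterated composite inserting a copy of $f$ into each of the $[H:K]$ inputs of $j$ (after a suitable reindexing of the leaves to match $H\times_K X$) is fixed by the graph of the full $H$-action on $H\times_K X$ — an action that permutes the $[H:K]$ blocks transitively and conjugates the $K$-actions within them. The delicate point is to choose coset representatives for $H/K$ and then track through \eqref{eq:operad_equivariance} and the associativity identity \eqref{eq:operad_associativity} that the conjugated copies of $f$ which appear are still fixed by the correspondingly conjugated stabilizers, so that the assembled element is genuinely $H$-fixed. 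This is the one axiom whose verification really tests whether the Blumberg--Hill bookkeeping survives replacing their contractibility inputs by the explicit elements $b_H^{(n)}$, and so it is where I would concentrate the effort.
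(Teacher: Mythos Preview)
Your proposal is correct and takes exactly the approach the paper indicates: the paper does not give its own proof but simply points to \cite[Lemmas 4.10--4.12]{blumbergOperadicMultiplicationsEquivariant2015} and observes that contractibility is only invoked there to guarantee nonemptiness of certain fixed-point sets, which your hypothesis supplies directly. Your sketch is a faithful (and considerably more detailed) elaboration of that remark, including the caveat about $P(0)$, which the paper leaves implicit.
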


There have since been a few different equivalent formulations that encode the same data as an indexing system. The one we are most interested in this paper is the notion of a \emph{transfer system} (\cite{balchinNinftyoperadsAssociahedra2021,rubinDetectingSteinerLinear2021})

\begin{definition}
    Denote the subgroup lattice by $(\SubgroupLattice{G},\leq)$. A transfer system $\to$ of $G$ is a refinement of the subgroup lattice such that the following holds
    \begin{enumerate}
        \item For all $H\leq G$, then $H\to H$,
        \item If $K\to H$, then $K^g\to H^g$ for all $g\in G$,
        \item If $K\to H$ and $H\to L$, then $K\to L$,
        \item If $K\to H$ and $\vH[']\leq H$, then $K\cap \vH[']\to \vH[']$. 
    \end{enumerate}
    Transfer systems of $G$ then form a poset under inclusion which we will denote by $\TransferSystemsOf{G}$. We will often use a greek letter such as $\tau$ for a transfer system, and write $K\Transfer{\tau} H$ for the transfers it contains.
\end{definition}

The connection between transfer system and indexing systems starts by noticing that indexing systems are closed under subobjects and disjoint unions; and so are completely determined by what orbits $H/K$ it contains. In particular, we can define the following two functors:

\begin{gather*}
    \CoefficientSystemOf[uline]:\TransferSystemsOf{G}\to\CategoryOfIndexingSystems{G} \\
    \CoefficientSystemOf[uline]{\tau}(G/H) = \set[\bigg]{X\in \Set^H_{fin}\given X\cong \amalg_i H/K_i\text{ and } K_i \Transfer{\tau} H \text{ for all }i} \shortintertext{and, }
    \TransfersOf:\CategoryOfIndexingSystems{G}\to \TransferSystemsOf{G} \\ 
    \TransfersOf{I} = \set[\bigg]{K\leq H \given H/K\in I(G/H)}
\end{gather*}
The functors $\CoefficientSystemOf[uline]$ and $\TransfersOf$ are inverse functors of each other \cite[Theorem 3.7]{rubinDetectingSteinerLinear2021}, and establish the equivalence between indexing systems and transfer systems. For a $G$-operad $P$, we will abuse notation and use $\TransfersOf{P}$ also for the composition $\TransfersOf{\AdmissibleSetsOf{P}}$. 

\begin{definition}\label{definition: realize transfer system}
    A $G$-operad $P$ \emph{realizes} a transfer system $\tau\in \TransferSystemsOf{G}$ if it is $\Sigma$-free, for all $H\leq G$ we have $H\times \set{\id}\in \FixedSubgroupsOf[2]{P}$, and $\TransfersOf{P}=\tau$.
\end{definition}

\subsection{Coinduction and Suboperads}
As explained in the introduction, the idea behind our construction is to find suboperads of coinduced operads $\Coinduced[e,pow=G]P$ that have the correct fixed points. 
This idea has been considered previously. 
As communicated to the author by Mike Hill, one of the first attempts to prove the essential surjectivity of \cref{equation: Ninf homotopy to transfer} was to find candidate suboperads of the equivariant Barratt-Eccles operad $\BarrattEccles_G$, which is constructed by a coinduction operation. Let us quickly recall this operad: as explained in \cite[Definition 3.3, Lemma 3.3]{rubinCombinatorial$N_infty$Operads2021}, there is a functor \[E: \Operads(\Set^G)\to \Operads(\Spaces^G)\] such that for $P\in \Operads(\Set^G)$, the $G$-space $EP(n)$ is a universal space for $\FixedSubgroupsOf[n]{P}$. 
The Equivariant Barratt-Eccles operad $\BarrattEccles_G$ is then $E(\Coinduced[e,pow=G]\Assoc)$ where $\Assoc$ is the standard associative operad $\Assoc{n}=\Sigma_n$. The $G$-operad $\Coinduced[e,pow=G]\Assoc$ only has finitely many elements in each component, and so, it seems reasonable to try and find a suboperad of $\Coinduced[e,pow=G]\Assoc$ that realizes a specified transfer system before passing through to $E$. Unfortunately, Bonventre \cite[Example B.2.1.]{bonventreComparisonModelsEquivariant2017} gave a counterexample to this approach, and it appears this idea was abandoned soon after.

In order to motivate our construction, let us consider the little $k$-cubes operad $\LittleCubeOperad[k]$ and examine the fixed points of $\Coinduced[e,pow=G]\LittleCubeOperad[k]$. An element of the coinduced operad $x\in \Coinduced[e,pow=G]\LittleCubeOperad[k](n)$ can be interpreted as a map $x:G\times \vn[finset]\to \LittleCubeOperad[k](1)$ such that for any $g\in G$ and $i\neq j\in \vn[finset]$, the embeddings $x(g,i)$ and $x(g,j)$ have disjoint images. Let us go over two illustrative examples of what the fixed points of $\Coinduced[e,pow=G]\LittleCubeOperad[k](n)$ are for different groups $G$ and graph subgroups $\Gamma$.

\begin{example}
    Suppose $G=C_3$ with generator $\sigma$ and $\Gamma \leq C_3\times \Sigma_4$ is the graph subgroup generated by $(\sigma,(2,3,4))$. We can picture a general element $x\in \Coinduced[pow=G,e]\LittleCubeOperad[k](4)$ as an array where the rows are indexed by elements of $C_3$ and the columns indexed by the elements of $\underline{4}$, as in \cref{basic array picture}. 
    \begin{figure}[!ht]
        \begin{center}
        \begin{tikzpicture}
        \matrix[matrix of math nodes, left delimiter = {(},right delimiter = {)}, nodes = {minimum width=50px, minimum height = 20px}]{
            \node{x(e,1)};& \node{x(e,2)}; & \node{x(e,3)};& \node{x(e,4)};\\
            \node{x(\sigma,1)};& \node{x(\sigma,2)}; & \node{x(\sigma,3)};& \node{x(\sigma,4)};\\
            \node{x(\sigma^2,1)};& \node{x(\sigma^2,2)}; & \node{x(\sigma^2,3)};& \node{x(\sigma^2,4)};\\
        };
        \end{tikzpicture}
        \caption{The element $x$ pictured as an array of embeddings.}
        \label{basic array picture}
        \end{center}
    \end{figure}
    The group action $C_3$ then acts by permuting rows, while $\Sigma_4$ acts by permuting columns. If $x\in \Coinduced[pow=G,e]\LittleCubeOperad[k](4)^{\Gamma}$, then we must have $$\left((\sigma,(2,3,4))\cdot x\right)(g,i) = x(\vsigma[inv]g,(4,3,2)i) = x(g,i).$$ 
    In terms of our array picture, if we color components that are forced equal (see \cref{twisted column simple example}) they form ``twisted columns'' which reflects the structure of the corresponding admissible set $\SetFromGraph[\Gamma] = C_3/C_3\amalg C_3/e.$ Note that components of different colors can never be equal since embeddings in the same row must have disjoint image.
    \begin{figure}[!ht]
        \begin{center}
        \begin{tikzpicture}
        \matrix[matrix of math nodes, left delimiter = {(},right delimiter = {)}, nodes = {minimum width=50px, minimum height = 20px}]{
            \node[fill=orange!30]{x(e,1)};& \node[fill=purple!30]{x(e,2)}; & \node[fill=green!30]{x(e,3)};& \node[fill=blue!30]{x(e,4)};\\
            \node[fill=orange!30]{x(\sigma,1)};& \node[fill=blue!30]{x(\sigma,2)}; & \node[fill=purple!30]{x(\sigma,3)};& \node[fill=green!30]{x(\sigma,4)};\\
            \node[fill=orange!30]{x(\sigma^2,1)};& \node[fill=green!30]{x(\sigma^2,2)}; & \node[fill=blue!30]{x(\sigma^2,3)};& \node[fill=purple!30]{x(\sigma^2,4)};\\
        };
        \end{tikzpicture}
        \caption{An array picture of an element of $\Coinduced[pow=C_3,e]\LittleCubeOperad[k](4)^{\Gamma}$}\label{twisted column simple example}
        \end{center}
    \end{figure}
\end{example}

\begin{example}
    Consider the slightly more complex case where $G=C_2\inner{\tau}\times C_3\inner{\sigma}$ and $\Gamma$ is the graph subgroup of $G\times \Sigma_3$ generated by $((1,\sigma),(123))$. An element $x\in \Coinduced[e,pow=G]\LittleCubeOperad[k](3)^\Gamma$ can then be visualized as in \cref{more complicated twisted column example}.

    \begin{figure}[!ht]
        \begin{center}
        \begin{tikzpicture}
        \matrix[matrix of math nodes, left delimiter = {(},right delimiter = {)}, nodes = {minimum width=65px, minimum height = 20px}]{
            \node[fill=orange!30]{x((e,e),1)};& \node[fill=purple!30]{x((e,e),2)}; & \node[fill=green!30]{x((e,e),3)};\\
            \node[fill=green!30]{x((e,\sigma),1)};& \node[fill=orange!30]{x((e,\sigma),2)};&\node[fill=purple!30]{x((e,\sigma),3)};\\
            \node[fill=purple!30]{x((e,\sigma^2),1)};& \node[fill=green!30]{x((e,\sigma^2),2)}; & \node[fill=orange!30]{x((e,\sigma^2),3)};\\
            \node[fill=blue!30]{x(\tau,e),1)};& \node[fill=yellow!30]{x((\tau,e),2)}; & \node[fill=red!30]{x((\tau,e),3)};\\
            \node[fill=red!30]{x((\tau,\sigma),1)};& \node[fill=blue!30]{x((\tau,\sigma),2)};&\node[fill=yellow!30]{x((\tau,\sigma),3)};\\
            \node[fill=yellow!30]{x((\tau,\sigma^2),1)};& \node[fill=red!30]{x((\tau,\sigma^2),2)}; & \node[fill=blue!30]{x((\tau,\sigma^2),3)};\\
        };
        \end{tikzpicture}
        \caption{An array picture of an element of $\Coinduced[pow=C_6,e]\LittleCubeOperad[k](4)^{\Gamma}$}
        \label{more complicated twisted column example}
        \end{center}
    \end{figure}
    Unlike in the previous example, the twisted columns of components that are forced equal don't span the entire array. This is because the graph subgroup $\Gamma$ is such that $H_\Gamma = C_3\inner{\sigma} \subseteq G$, and our twisted columns only span the indices corresponding to a coset of $H_\Gamma$. Again, components with the same color are forced to be equal, and those with colors that appear in the same row are forced to not be equal.
\end{example}

As these examples illustrate, fixed points of $\Coinduced[e,pow=G]\LittleCubeOperad[k](n)$ are composed of twisted columns, whose form is determined by the graph subgroup that fixes it. Our construction will be based on controlling what kind of twisted columns are possible after operadic composition. One important observation in this regard is the following: given embeddings $x_1,x_2,y_1,y_2\in \LittleCubeOperad[k](1)$, if $x_1$ and $x_2$ have disjoint image, then $x_1y_1$ and $x_2y_2$ will also have disjoint image. In the context of array picture, this means that once two elements have disjoint image in an array, there is no further composition that could make any of their composites equal and part of a twisted column.

This behavior turns out to be general, and we will abstract this property in the next section. It will be the key property that will allow us to control which twisted columns can appear.

\section{Intersection Monoids and Operads}\label{section: intersection monoids and operads}
In this section, we will write $\vC[scr]$ for the category $\Set$ or $\Spaces$.
We define the following to abstract the main properties we need from the monoid $\LittleCubeOperad[k](1)$. 

\begin{definition}\label{definition: intersection monoids}
    An intersection monoid $(M,\Intersect_M)$ is a monoid $M$ in $\vC[scr]$ with a symmetric, reflexive relation $\Intersect_M$ on $M$ such that
    \begin{enumerate}
        \item\label[condition]{condition: right invariance} for all $\vx[1],\vx[2],\vy[1],\vy[2]\in M$, if $\vx[1]\vy[1]\Intersect_M \vx[2]\vy[2]$ then $\vx[1]\Intersect_M \vx[2]$, and
        \item\label[condition]{condition: left partial invariance} for all $x,\vy[1],\vy[2]\in M$, if $x\vy[1]\Intersect_M x\vy[2]$ then $\vy[1]\Intersect_M \vy[2]$.
    \end{enumerate}
    We will denote the complement relation to $\Intersect_M$ by $\Disjoint_M$. If $x\Intersect_M y$, then we say $x$ and $y$ intersect, and if $x\Disjoint_M y$, we say they are disjoint. An intersection monoid is \emph{trivial} if for all $\vx[1],\vx[2]\in M$, we have $\vx[1]\Intersect_M\vx[2]$. It may be useful to consider the above conditions in terms of the disjoint relation $\Disjoint_M$:
    \begin{enumerate}[label = ($\arabic*^\prime$)]
        \item for all $\vx[1],\vx[2],\vy[1],\vy[2]\in M$, if $\vx[1]\Disjoint_M \vx[2]$, then $\vx[1]\vy[1]\Disjoint_M \vx[2]\vy[2]$, and 
        \item for all $x,\vy[1],\vy[2]\in M$, if $\vy[1]\Disjoint_M \vy[2]$, then $\vx\vy[1]\Disjoint_M \vx\vy[2]$.
    \end{enumerate}

    A morphism of intersection monoids $f:(M,\Intersect_M)\to (N,\Intersect_N)$ is a monoid homomorphism $f:M\to N$ such that if $x\Disjoint_M y$ then $f(x)\Disjoint_N f(y)$. We then have a category of intersection monoids in $\vC[scr]$ which we denote by $\CategoryOfIntersectionMonoids(\vC[scr])$.
\end{definition}

One useful observation from this definition is the following.

\begin{lemma}\label{lemma: nontrivial have infinite disjoint elements}
    If $M$ is a non-trivial intersection monoid, then for any $n\in \mathbb{N}$, there exists a family of elements $\set{\vx[i]}_{i\in \vn[finset]}$ which are pairwise disjoint: $\vx[i]\Disjoint \vx[j]$ for all $i\neq j$.
\end{lemma}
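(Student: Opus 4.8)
The plan is to produce an explicit pairwise-disjoint family out of a single witnessing pair of disjoint elements. Since $M$ is non-trivial, there exist $a,b\in M$ with $a\Disjoint b$. Fixing $n$, I would set
\[
x_i := b^{i-1}a \qquad (i\in\vn[finset]),
\]
so that $x_1=a$, $x_2=ba$, $x_3=b^2a$, and so on, and claim that $\set{x_i}_{i\in\vn[finset]}$ is pairwise disjoint. (The cases $n=0,1$ are vacuous, so one may assume $n\ge 2$.)

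The verification splits into two short steps. First I would establish the auxiliary claim that $a\Disjoint b^{k}a$ for every $k\ge 1$: writing $b^{k}a = b\cdot(b^{k-1}a)$ and $a = a\cdot 1$ with $1\in M$ the monoid unit, the contrapositive of \cref{condition: right invariance} applied to the hypothesis $a\Disjoint b$ (taking $x_1=a$, $x_2=b$, $y_1=1$, $y_2=b^{k-1}a$) yields $a\cdot 1\Disjoint b\cdot(b^{k-1}a)$, i.e.\ $a\Disjoint b^{k}a$. Second, for $1\le i<j\le n$ I would write $x_i = b^{i-1}\cdot a$ and $x_j = b^{i-1}\cdot(b^{j-i}a)$; since $j-i\ge 1$, the first step gives $a\Disjoint b^{j-i}a$, and then the contrapositive of \cref{condition: left partial invariance}, applied with common left factor $b^{i-1}$, gives $x_i\Disjoint x_j$. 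Symmetry of $\Disjoint$ covers the case $i>j$, and the diagonal case $i=j$ never arises, so the family is pairwise disjoint (and automatically consists of $n$ distinct elements, since $\Intersect_M$ is reflexive).

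There is no genuine obstacle here; the only point requiring thought is the choice of family. One must iterate multiplication \emph{by $b$ on the left of $a$}, rather than on the right, precisely so that \cref{condition: right invariance} can absorb the leading $b$ against the unit and \cref{condition: left partial invariance} can strip the common prefix $b^{i-1}$. The mirror-image guess $x_i=ab^{i-1}$ does not work, since it would require knowing $1\Disjoint b$, which is not available. Beyond that, the proof is a two-line application of the two intersection-monoid axioms in their disjointness form.
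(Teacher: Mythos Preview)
Your proof is correct and slightly different from the paper's. The paper proceeds by induction on $n$: given a pairwise disjoint family $\{x_i\}_{i\in\underline{n}}$, it replaces $x_n$ by the two elements $x_nx_1$ and $x_nx_2$ to obtain a family of size $n+1$, checking disjointness via the two axioms exactly as you do. Your approach instead fixes a single disjoint pair $a\Disjoint b$ and writes down the closed-form family $x_i=b^{i-1}a$, verifying disjointness directly. The underlying mechanism is the same (axiom~(1$'$) to kill a leading factor against the unit, axiom~(2$'$) to strip a common left prefix), but your version avoids the induction and yields an explicit family in one stroke; the paper's version is perhaps more suggestive of the ``keep splitting'' intuition. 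Your remark about why $ab^{i-1}$ fails is a nice touch not present in the paper.
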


\begin{proof}
    We prove this via induction. The base case of $n=2$ holds by the definition of $M$ being non-trivial. Suppose now we have a family $\set{\vx[i]}_{i\in \vn[finset]}$ of pairwise disjoint elements where $n\geq 2$. We construct a new family $\set{\vy[i]}_{i\in\underline{n+1}}$ where \[\vy[i]=\begin{cases*}
        \vx[i] & if $i<n$ \\ \vx[n]\vx[1] &if $i=n$ \\ \vx[n]\vx[2] &if $i=n+1$. 
    \end{cases*}\] This is pairwise disjoint by the axioms of an intersection monoid.
\end{proof}

\begin{example}\label{example: basic examples of intersection monoids}
    \begin{enumerate}
        \item The monoid of unary elements of the little $k$-cubes operad $\LittleCubeOperad_k(1)$ is an intersection monoid where for $x,y\in \LittleCubeOperad_k(1)$, the intersection relation is given by $x\wedge y$ if and only if $x(\text{int}(I^k))\cap y(\text{int}(I^k))$ is non-empty.
        \item The monoid of unary elements of the linear isometries operad $\LinearIsometriesOperad(1)$ is an intersection monoid where for $x,y\in \LinearIsometriesOperad(1)$, the intersection relation is given by $x\Intersect y$ if and only if $\Image{x}^{\perp}\cap \Image{y}\neq \set{0}$. Equivalently, $x\Disjoint y$ if and only if $\Image{x}\perp \Image{y}$.
        \item Consider the free monoid generated by two letters $\DyadicIntervalMonoid=F(\set{a,b})$. For a general word \[w=\ell_1\ell_2\dots\ell_n\] where $\ell_k\in \set{a,b}$, we will write the length by $\Length{w}=n$ and the $k$-th letter by $w^k:=\ell_k$. We can put an intersection relation on $\DyadicIntervalMonoid$ where given two words $w_1,w_2\in \DyadicIntervalMonoid$, we set $w_1\Disjoint w_2$ if and only if there exists a $k\leq \min(\Length{\vw[1]},\Length{\vw[2]})$ such that $w_1^k\neq w_2^k$. We call $\DyadicIntervalMonoid$ the \emph{dyadic interval monoid}. We will explain this name in \cref{section: realizing ninf}.
    \end{enumerate}
\end{example}

\begin{remark}
    An intersection relation on a monoid $M$ is related to divisibility properties of that monoid. Given any monoid $M$, we can attempt to put an intersection relation $\Intersect$ on $M$ where for $x,y\in M$, we set $x\Intersect y$ if and only if there exists $\vx['],\vy[']$ such that $\vx\vx[']=\vy\vy[']$. This relation is reflexive and symmetric and satisfies \cref{condition: right invariance} of \cref{definition: intersection monoids}. If we further assume that $M$ is a left domain (if $xy=\vx\vy[']$ in $M$, then $\vy=\vy[']$), then $M$ also satisfies \cref{condition: left partial invariance} and so is an intersection monoid under this relation. If there was any other intersection monoid structure $\Intersect[~]$ on $M$, then if $x\Disjoint[~]y$, we must also have $x\Disjoint y$. This can be seen as if $x\Intersect y$, then there exists $\vx['],\vy[']\in M$ such that $\vx\vx[']=\vy\vy[']$, and so $\vx\vx[']\Intersect[~]\vy\vy[']$ by reflexivity. This, however, would contradict $x\Disjoint[~]y$ and \cref{condition: right invariance} of \cref{definition: intersection monoids}. As a consequence, for a left domain monoid $M$, we see that the intersection monoid $(M,\Intersect)$ is terminal for the subcategory of $\CategoryOfIntersectionMonoids$ of intersection monoids with underlying monoid $M$ and inclusions.
\end{remark}

\begin{definition}
    Given a monoid $M$, we build a $\vC[scr,pow=G]$-valued operad $\OConstr[pow=G]{M}$ as follows. The $n$-th space is given by \[\OConstr[pow=G]{M}(n) := \Map(G\times \vn[finset],M)\] where the $G\times \Sigma_n$-action is given by \[\left((g,\sigma)\cdot x\right)(k,i) = x(\vg[inv]k,\vsigma[inv]{i}).\] Here we take $\OConstr[pow=G]{M}(0)=\ast$.

    We will define the operadic structure of $\OConstr[pow=G]{M}$ in terms of $\circ_i$-composition. Given $n,m\in \mathbb{N}$, and $i\in \vn[finset]$, we have two functions defined as follows. 
    The \emph{$i$-collapse function} $\Combine[i,pow={n,m}]\colon\underline{n+m-1}\to \vn[finset]$ by \begin{align*}
        \Combine[i,pow={n,m}](k) :=\begin{cases*}
            k & if $k<i$ \\ i & if $i \leq k \leq i+m - 1$ \\ k-m+1 & otherwise. 
        \end{cases*}
    \end{align*}
    As well as the \emph{$i$-th shift function} $\Shift[i,pow=m]\colon \set{i,i+1,\dots,i+m-1}\to \vm[finset]$ defined by $\Shift[i,pow=m](k):= k-i+1$.  Then for $x\in \OConstr[pow=G]{M}(n)$, and $y\in\OConstr[pow=G]{M}{m}$, the $\circ_i$-composition is given by
    \begin{align}\label{equation: partial composition definition}
        (x\circ_i y)(g,k) = \begin{cases*}
            x(g,\Combine[i,pow={n,m}](k))y(g,\Shift[i,pow= m](k)) & if $i\leq k \leq i+m-1$ \\ 
            x(g, \Combine[i,pow={n,m}](k)) & otherwise.
        \end{cases*}
    \end{align}
    We will use the convention that \[y(g,\Shift[oline,i](k)) = \begin{cases*}
        y(g,\Shift[i,pow=m](k)) & if $i\leq k \leq i+m-1$ \\ 
        1 & otherwise,
    \end{cases*}\] where $1$ is the unit of the monoid $M$.
    We can then express $\circ_i$-composition as 
    \begin{align}\label{equation: compact partial composition definition}
        (x\circ_i y)(g,k) = x(g,\Combine[i](k))y(g,\Shift[oline,i](k))
    \end{align}
    When $y=\ast\in \OConstr[pow=G]{M}(0)$, composition is given by \[ (x\circ_i y)(g,k) = \begin{cases*}
            x(g,k) & if $k < i$ \\ 
            x(g, k+1) & otherwise.
        \end{cases*}\]
\end{definition}

Let us verify that $\OConstr[pow=G]{M}$ is a $G$-operad as defined.

\begin{lemma}\label{lemma:O_construction_is_an_operad}
    Let $M$ be an intersection monoid. The $G$-symmetric sequence $\OConstr[pow=G]{M}$ is a well-defined operad in $\vC[cal,pow=G]$.
\end{lemma}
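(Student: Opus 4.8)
The plan is to verify the operad axioms directly from the explicit formula \eqref{equation: compact partial composition definition}. The first move is to eliminate the group $G$. Since $\circ_i$ is computed pointwise in $g$, and the action $\left[(g,\sigma)\cdot f\right](h)=\sigma f(\vg[inv]h)$ on $\Coinduced[e,pow=G]$ unwinds, under $\Map(G\times\vn[finset],M)\cong\Map(G,\Map(\vn[finset],M))$, to the action defining $\OConstr[pow=G]{M}$, one gets an isomorphism of $G$-symmetric sequences $\OConstr[pow=G]{M}\cong\Coinduced[e,pow=G]{\OConstr[pow=e]{M}}$ intertwining the $\circ_i$-operations, where $\OConstr[pow=e]{M}$ denotes the same construction carried out with $G$ trivial (so $\OConstr[pow=e]{M}(n)=\Map(\vn[finset],M)$, an $n$-fold product of copies of $M$ with $\Sigma_n$ permuting factors). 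Granting that $\OConstr[pow=e]{M}$ is an operad in $\vC[scr]$, the lemma then follows because $\Map(G,-)$, being a right adjoint, preserves the products and commuting diagrams that define an operad --- exactly the observation already made when coinduced operads were introduced. So it remains to show $\OConstr[pow=e]{M}$ is an operad, and for this only the monoid structure of $M$ is used: the intersection relation $\Intersect_M$ is irrelevant to this lemma. When $\vC[scr]=\Spaces$ one additionally checks that $\circ_i$ is continuous, which is clear: on the finite product $\Map(\vn[finset],M)$, each coordinate of $x\circ_i y$ is either a coordinate of $x$ or the product of a coordinate of $x$ with one of $y$, so $\circ_i$ is assembled from projections and the continuous multiplication of $M$.

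For $\OConstr[pow=e]{M}$ I would dispatch the two easy axioms first. The formula $(\sigma\cdot x)(i)=x(\vsigma[inv]{i})$ is a genuine left $\Sigma_n$-action, so $\OConstr[pow=e]{M}$ is a symmetric sequence. For unitality \eqref{eq:operad_unitality}, take $u\in\OConstr[pow=e]{M}(1)$ to be the constant function at the unit $1\in M$; then both $x\circ_i u=x$ and $u\circ_1 x=x$ are immediate, because the collapse functions that occur are forced to be identities or constant, the shift functions are forced to be identities, and every factor contributed by $u$ equals $1$. The degenerate clauses involving $\OConstr[pow=e]{M}(0)=\ast$ are exactly the specializations of \eqref{equation: compact partial composition definition} with $m=0$ or $\ell=0$ (in which the $y$- or $z$-factor is absent), so they require no separate argument.

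The substance lies in associativity \eqref{eq:operad_associativity} and equivariance \eqref{eq:operad_equivariance}, and for both the strategy is the same: evaluate the two sides at an index $k$ and first check that the index functions appearing in front of $x$, $y$ (and $z$) agree --- after which the equality of composites is automatic, except that in the ``nested'' associativity case $i\le j\le i+m-1$ one also invokes associativity of $M$ to reassociate the common factor contributed by $x$. For \eqref{eq:operad_associativity} the required index identities, in each of the three cases, are the standard composition relations among the families $\set{\Combine[i,pow={n,m}]}$ and $\set{\Shift[i,pow=m]}$ --- nothing more than the combinatorics of substituting one finite set into another. For \eqref{eq:operad_equivariance}, after unwinding the left actions --- taking care with the inverses in $(\sigma\cdot x)(i)=x(\vsigma[inv]{i})$, which is the point where our conventions differ from the usual ones --- the claim reduces to the identity $\Combine[\sigma(i)]\circ(\sigma\circ_i\tau)=\sigma\circ\Combine[i]$ together with its evident analogue for the shift functions and the fact that $\sigma\circ_i\tau$ carries the block $\set{i,\dots,i+m-1}$ bijectively onto $\set{\sigma(i),\dots,\sigma(i)+m-1}$; all of these follow by a short case analysis straight from the explicit formula for $\sigma\circ_i\tau$ recorded in the introduction.

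I expect the only real obstacle to be organizational: keeping the domain and range conditions on the collapse and shift maps straight so that the identities above hold on the nose (including the convention that the trailing factor is $1\in M$ off the block), and matching the index arithmetic against the left-handed action conventions used throughout. None of the individual steps is deep, but the case analyses in associativity and equivariance are where a mistake would most plausibly creep in, so those are the two I would write out in full detail, treating the remaining verifications as routine.
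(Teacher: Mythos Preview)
Your proposal is correct, and the core verification --- the index identities relating the collapse and shift maps to $\sigma\circ_i\tau$ --- is precisely what the paper checks: your identity $\Combine[\sigma(i)]\circ(\sigma\circ_i\tau)=\sigma\circ\Combine[i]$ is a rearrangement of the paper's $\vsigma[inv]\Combine[\vsigma{i}]{k} = \Combine[i]{(\vsigma[inv]\circ_{\vsigma{i}}\vtau[inv])(k)}$, using $(\sigma\circ_i\tau)^{-1}=\vsigma[inv]\circ_{\sigma(i)}\vtau[inv]$.

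The one organizational difference worth noting is your preliminary reduction to the trivial group via the isomorphism $\OConstr[pow=G]{M}\cong\Coinduced[e,pow=G]{\OConstr[pow=e]{M}}$. The paper verifies the axioms directly for general $G$, carrying the pointwise-in-$g$ variable through every computation; you instead factor this out at the start and appeal to the fact that $\Map(G,-)$, as a right adjoint, preserves operad diagrams. Your route is a bit cleaner (and makes explicit that the intersection relation is irrelevant here), while the paper's is more self-contained; the combinatorics underneath is identical.
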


\begin{proof}
    The unit element $u\in \OConstr[pow=G]{M}$ is simply given by $u(g,i)=1$ for all $(g,i)\in G\times\vn[finset]$. Associativity of the composition maps follows from associativity of the monoid product of $M$.
    Let us verify the equivariance condition, \cref{eq:operad_equivariance}.
    Let $x\in \OConstr[pow=G]{M}(n)$, $y\in\OConstr[pow=G]{M}(m)$, $\vsigma\in\vSigma[n]$, and $\vtau\in\vSigma[m]$. Observe that we have the following equations:
    \begin{gather*}
        \vsigma[inv]\Combine[\vsigma{i}]{k} = \Combine[i]{(\vsigma[inv]\circ_{\vsigma{i}}\id)(k)}  = \Combine[i]{(\vsigma[inv]\circ_{\vsigma{i}}\vtau[inv])(k)} \shortintertext{and,}
        \vtau[inv]\Shift[\vsigma{i}](k) = \Shift[\vsigma{i}]((\id\circ_{\sigma(i)}\vtau[inv])(k)) = \Shift[i]((\vsigma[inv]\circ_{\sigma(i)}\vtau[inv])(k)).
    \end{gather*}
    Moreover, not that $(\sigma\circ_i\tau)^{-1}= (\vsigma[inv]\circ_{\vsigma{i}}\vtau[inv])$. 
    Hence, we obtain that 
    \begin{align*}
        \left(\sigma\cdot \vx\right)\circ_{\vsigma{i}} (\tau \cdot \vy)(g,k) &= [\sigma\cdot\vx](g,\Combine[\vsigma{i}]{k})[\tau\cdot\vy](g,\Shift[\vsigma{i},oline](k)) \\
        &=\vx(g,\vsigma[inv]\Combine[\vsigma{i}]{k})\vy(g,\vtau[inv]\Shift[\vsigma{i},oline](k)) \\ 
        &= \vx(g,\Combine[i]{(\vsigma[inv]\circ_{\vsigma{i}}\vtau[inv])(k)})\vy(g,\Shift[i,oline]((\vsigma[inv]\circ_{\sigma(i)}\vtau[inv])(k)))  \\ 
        &= \left((\vsigma\circ_{i}\vtau)\cdot (\vx\circ_i \vy)\right)(g,k)
    \end{align*}
    as required.
\end{proof}

\begin{definition}
    If $M$ is an intersection monoid, we say an element $x\in \OConstr[pow=G](M)$ is \emph{$\Sigma$-disjoint} if for all $g\in G$, and $i\neq j\in \vn[finset]$ we have $\vx{g,i}\Disjoint \vx{g,j}.$
    We say $x$ has \emph{strict columns} if for any $i\in\vn[finset]$, and $\vg[1],\vg[2]\in G$ we have either $\vx{\vg[1],i}=\vx{\vg[2],i}$ or $\vx{\vg[1],i}\Disjoint\vx{\vg[2],i}$
\end{definition}

The following lemma is a straight forward consequence of the axioms of an intersection monoid and how the composition is defined.

\begin{lemma}\label{lemma:sigma disjoint and strict columns closed under composition}
    Let $M$ be an intersection monoid, then $\Sigma$-disjoint elements and strict column elements are closed under composition. That is, if $x\in \OConstr[pow=G](M)(n)$, $y\in \OConstr[pow=G](M)(m)$ are $\Sigma$-disjoint or have strict columns, then the composition $\vx\circ_i \vy$ is $\Sigma$-disjoint or has strict columns respectively.
\end{lemma}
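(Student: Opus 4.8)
The plan is to check the two closure properties separately, in each case tracing through the defining formula \eqref{equation: compact partial composition definition} for $\circ_i$-composition and invoking the two axioms of an intersection monoid. Write $z = x\circ_i y$, so that $z(g,k) = x(g,\Combine[i](k))\,y(g,\Shift[oline,i](k))$ for all $g\in G$ and $k\in\underline{n+m-1}$.

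First I would handle the $\Sigma$-disjoint case. Fix $g\in G$ and $k\neq \ell$ in $\underline{n+m-1}$; I must show $z(g,k)\Disjoint z(g,\ell)$. There are two cases depending on whether $\Combine[i](k)$ and $\Combine[i](\ell)$ are equal. If $\Combine[i](k)\neq\Combine[i](\ell)$, then since $x$ is $\Sigma$-disjoint we have $x(g,\Combine[i](k))\Disjoint x(g,\Combine[i](\ell))$, and then condition $(1^\prime)$ of \cref{definition: intersection monoids} (right-multiplying the disjoint pair by $y(g,\Shift[oline,i](k))$ and $y(g,\Shift[oline,i](\ell))$ respectively) gives $z(g,k)\Disjoint z(g,\ell)$. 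If instead $\Combine[i](k) = \Combine[i](\ell)$, then by the definition of the collapse function both $k$ and $\ell$ lie in $\set{i,\dots,i+m-1}$, so $\Shift[oline,i]$ is the honest shift $\Shift[i,pow=m]$ there, and $\Shift[i,pow=m](k)\neq\Shift[i,pow=m](\ell)$ since $\Shift[i,pow=m]$ is injective; thus $y(g,\Shift[i,pow=m](k))\Disjoint y(g,\Shift[i,pow=m](\ell))$ because $y$ is $\Sigma$-disjoint, and then condition $(2^\prime)$ (left-multiplying by the common factor $x(g,\Combine[i](k))$) gives $z(g,k)\Disjoint z(g,\ell)$. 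Either way $z$ is $\Sigma$-disjoint.

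Next the strict-columns case. Fix a column index $k\in\underline{n+m-1}$ and $g_1,g_2\in G$; I must show $z(g_1,k) = z(g_2,k)$ or $z(g_1,k)\Disjoint z(g_2,k)$. Let $a = \Combine[i](k)$ and note the two entries are $z(g_j,k) = x(g_j,a)\,y(g_j,\Shift[oline,i](k))$. Since $x$ has strict columns, either $x(g_1,a) = x(g_2,a)$ or $x(g_1,a)\Disjoint x(g_2,a)$. In the disjoint subcase, condition $(1^\prime)$ immediately gives $z(g_1,k)\Disjoint z(g_2,k)$. In the equal subcase, if $k\notin\set{i,\dots,i+m-1}$ then $\Shift[oline,i](k)=1$ for both, so $z(g_1,k) = x(g_1,a) = x(g_2,a) = z(g_2,k)$; and if $k\in\set{i,\dots,i+m-1}$, then $z(g_j,k) = x(g_1,a)\,y(g_j,\Shift[i,pow=m](k))$ with a common left factor, and since $y$ has strict columns either $y(g_1,\Shift[i,pow=m](k)) = y(g_2,\Shift[i,pow=m](k))$, whence the two entries are equal, or $y(g_1,\Shift[i,pow=m](k))\Disjoint y(g_2,\Shift[i,pow=m](k))$, whence condition $(2^\prime)$ gives $z(g_1,k)\Disjoint z(g_2,k)$. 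This exhausts all cases. (The degenerate composition with $y=\ast\in\OConstr[pow=G](M)(0)$ is trivial, as it merely deletes a column.)

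The argument is entirely a bookkeeping exercise, so there is no serious obstacle; the only point requiring a little care is the case split in the $\Sigma$-disjoint argument according to whether the collapse function identifies the two indices, since that is precisely where one must switch from axiom $(1^\prime)$ to axiom $(2^\prime)$ and use injectivity of the shift function. It is worth remarking that the two halves are genuinely parallel, which is why the lemma bundles them together.
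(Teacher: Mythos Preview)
Your proof is correct and is precisely the routine verification the paper has in mind: the paper does not spell out the argument at all, merely remarking that the lemma ``is a straight forward consequence of the axioms of an intersection monoid and how the composition is defined.'' Your case split on whether $\Combine[i](k)=\Combine[i](\ell)$ (for $\Sigma$-disjointness) and on whether the $x$-column entries are equal or disjoint (for strict columns), together with the corresponding applications of axioms $(1')$ and $(2')$, is exactly the unpacking one would expect.
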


\begin{definition}
    We will denote the sub-$G$-symmetric sequence of $\OConstr[pow=G]{M}$ given by all $\Sigma$-disjoint elements that have strict columns by $\Realize[pow=G]{M}$. The sequence $\Realize[pow=G]{M}$ contains the unit and $G$-invariant, and so \cref{lemma:sigma disjoint and strict columns closed under composition} tells us that this is a suboperad. \Cref{lemma: nontrivial have infinite disjoint elements} also gives us that $\Realize[pow=G]{M}{n}$ is not empty when $M$ is non-trivial. We will call $\Realize[pow=G]{M}$ the \emph{$G$-incomplete realization by $M$} operad.
\end{definition}

Since a morphism of intersection monoids $f:M\to N$ preserves disjointness, we see that $\Realize[pow=G]$ is functorial on intersection monoids.

\begin{example}
    When $G= \set{1}$, the functor $\Realize[pow=G]$ recovers the following operads from \cref{example: basic examples of intersection monoids} from their unary components and intersection relations.
    \begin{align*}
        \Realize[pow=\set{1}]{\LittleCubeOperad_k(1)} &= \LittleCubeOperad_k \\
        \Realize[pow=\set{1}]{\LinearIsometriesOperad(1)} &= \LinearIsometriesOperad.
    \end{align*}
\end{example}

\begin{lemma}\label{lemma: equivalence of intersection monoids and intersection operads}
    The functor $\Realize[pow=\set{1}]:\CategoryOfIntersectionMonoids(\vC[scr])\to \Operads(\vC[scr])$ is fully faithful. 
\end{lemma}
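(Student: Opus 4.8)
The plan is to exhibit an inverse on morphisms, i.e. show that every operad map $\phi\colon \Realize[pow=\set{1}]{M} \to \Realize[pow=\set{1}]{N}$ arises from a unique morphism of intersection monoids $M \to N$. Faithfulness is the easy direction: since $\Realize[pow=\set{1}]{M}(1)$ is (the underlying object of) the monoid $M$ — when $G = \set{1}$, $\Sigma$-disjointness and the strict column condition are vacuous in arity $1$, so $\Realize[pow=\set{1}]{M}(1) = \OConstr[pow=\set 1]{M}(1) = \Map(\underline 1, M) \cong M$ — two intersection monoid morphisms inducing the same operad map in particular agree in arity $1$, hence are equal as monoid homomorphisms; and a morphism of intersection monoids is determined by its underlying monoid homomorphism. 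So the content is fullness.

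For fullness, first I would pin down how the operad structure of $\Realize[pow=\set 1]{M}$ recovers the monoid structure of $M$ in arity $1$. Identifying $\Realize[pow=\set 1]{M}(1) \cong M$ as above, the $\circ_1$-composition $M \times M = \Realize[pow=\set 1]{M}(1) \times \Realize[pow=\set 1]{M}(1) \to \Realize[pow=\set 1]{M}(1) = M$ is, by \cref{equation: compact partial composition definition}, exactly $(x,y) \mapsto xy$, and the operad unit is the monoid unit $1 \in M$. Hence an operad map $\phi$ restricts in arity $1$ to a monoid homomorphism $f := \phi(1)\colon M \to N$. Next I would check that $f$ preserves disjointness, which is where the arity-$2$ component and the strict-column/$\Sigma$-disjoint conditions enter: for $x \Disjoint_M y$ in $M$, the element of $\OConstr[pow=\set 1]{M}(2)$ sending $1 \mapsto x$, $2 \mapsto y$ is $\Sigma$-disjoint (and trivially has strict columns), so it lies in $\Realize[pow=\set 1]{M}(2)$; applying $\phi(2)$ gives an element of $\Realize[pow=\set 1]{N}(2)$, which is $\Sigma$-disjoint, so its two entries are disjoint in $N$. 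The one thing to verify is that $\phi(2)$ of this element has entries $f(x)$ and $f(y)$ — this follows by precomposing with the two operad maps $\underline 1 \hookrightarrow \underline 2$ (equivalently, composing with the unit in arity $1$, or using the degeneracy $\circ_i \ast$ from $\OConstr{M}(0) = \ast$) and naturality of $\phi$, reducing to the arity-$1$ statement $\phi(1) = f$. Therefore $f$ is a morphism of intersection monoids.

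It remains to see that $\Realize[pow=\set 1]{f} = \phi$, i.e. that $\phi$ is determined by $f$ in every arity. The key point is that every element $x \in \Realize[pow=\set 1]{M}(n)$ — a map $\underline n \to M$, i.e. an $n$-tuple $(x_1,\dots,x_n)$ of pairwise disjoint elements — is obtained from the arity-$1$ elements $x_i \in M$ by operadic operations: explicitly, fix any element $w \in \Realize[pow=\set 1]{M}(n)$ with $n$ pairwise disjoint coordinates (one exists for $n \le $ the relevant bound by \cref{lemma: nontrivial have infinite disjoint elements}; and in fact one checks directly from the $\circ_i$ formulas that $x = w' \circ$-composed appropriately, but more cleanly:) note that the unit $u_n \in \Realize[pow=\set 1]{M}(n)$, all of whose coordinates equal $1$, satisfies $u_n \circ_i x_i = $ the tuple that is $1$ everywhere except $x_i$ in slot $i$, and iterating, $x = ((u_n \circ_n x_n) \circ_{n-1} x_{n-1}) \circ \cdots \circ_1 x_1$ — wait, this needs the coordinates pairwise disjoint at each stage, which holds since the $x_i$ are pairwise disjoint and the intermediate tuples have a $1$ in the untouched slots and $1 \Disjoint$ nothing — hmm, $1$ intersects everything, so the intermediate tuples are \emph{not} $\Sigma$-disjoint and don't lie in $\Realize[pow=\set 1]{M}$. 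The correct fix: work instead inside $\OConstr[pow=\set 1]{M}$, where there is no disjointness constraint, factor $x$ through $\circ_i$-compositions of unary elements and the unit $u_n$ there, observe all the intermediate terms of the factorization that happen to be $\Sigma$-disjoint lie in $\Realize[pow=\set 1]{M}$, and push through; or, cleaner still: reduce to the statement via the maps $\Realize[pow=\set 1]{M}(n) \to \Realize[pow=\set 1]{M}(1)$, $x \mapsto x \circ_1 u \circ \cdots$ picking out coordinate $i$ by composing $u_n$'s other slots with $\ast \in \OConstr{M}(0)$, which \emph{does} stay in $\Realize[pow=\set 1]{M}$ since collapsing slots preserves $\Sigma$-disjointness. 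So: the coordinate-extraction maps $\pi_i\colon \Realize[pow=\set 1]{M}(n) \to \Realize[pow=\set 1]{M}(1) = M$ given by $\circ$-composing away all but the $i$-th input (using the nullary $\ast$) are operad-natural, whence $f(\pi_i(x)) = \pi_i^N(\phi(x))$ for all $i$, i.e. $\phi(n)(x)$ has coordinates $f(x_1),\dots,f(x_n)$, which is exactly $\Realize[pow=\set 1]{f}(n)(x)$.

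I expect the main obstacle to be this last step — carefully justifying that an element of $\Realize[pow=\set 1]{M}(n)$ is rigidly reconstructed from its unary coordinates through operad operations that stay inside $\Realize[pow=\set 1]{M}$ (the naive reconstruction via the operad unit leaves the suboperad because $1$ is never disjoint from anything). Using the nullary element $\ast \in \OConstr{M}(0)$ to \emph{delete} coordinates rather than the unit to \emph{insert} them circumvents this, since deleting coordinates preserves both $\Sigma$-disjointness and strict columns; everything else is a routine unwinding of the $\circ_i$-formulas in \cref{equation: compact partial composition definition} together with naturality of $\phi$.
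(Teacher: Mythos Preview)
Your proposal is correct and, after the detour, lands on exactly the paper's argument: use the nullary element $\ast\in\Realize[pow=\set{1}]{M}(0)$ to delete all but one coordinate, obtaining operad-natural projections $\Realize[pow=\set{1}]{M}(n)\to\Realize[pow=\set{1}]{M}(1)=M$ that force $\phi(n)=\phi(1)^n$. Your explicit verification that $\phi(1)$ preserves disjointness (via arity $2$) is a point the paper handles only implicitly through the observation that $\Realize[pow=\set{1}]{M}(2)\subseteq M^2$ is precisely the relation $\Disjoint_M$; otherwise the two proofs coincide, and your false start with the operadic unit can simply be excised.
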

\begin{proof}
    We have $\Realize[pow=\set{1}]{M}(1)=M$ as monoids, and $\Realize[pow=\set{1}]{M}(2) \subseteq M^2$ is exactly the disjoint relation $\Disjoint$ on $M$. As such, the faithfulness of $\Realize[pow=\set{1}]$ is straightforward.
    
    To show that it is full, suppose we have a morphism of operads \[f:\Realize[pow=\set{1}]{M}\to \Realize[pow=\set{1}]{N}.\] 
    For each level $n$, we have \[f(n): \Realize[pow=\set{1}]{M}(n) \subseteq M^n \to \Realize[pow=\set{1}]{N} \subseteq N^n.\]
    Write $f(n)_k$ for the $k$-th projection onto the $k$-th factor of $N^n$, so $f(n)= (f(n)_1,f(n)_2,\dots,f(n)_n)$. Composition with $\ast\in\Realize[pow=\set{1}]{M}(0)$ in each component except for the $k$-th position yields the following commutative diagram (where $\widehat{(-)}$ means we omit that operation).
    \begin{center}
        \begin{tikzpicture}[commutative diagrams/every diagram]
            \matrix[matrix of math nodes, name=m, commutative diagrams/every cell,column sep=4cm] {
            \mathcal{R}^{\{1\}}(M)(n) & \mathcal{R}^{\{1\}}(N)(n) \\
            \mathcal{R}^{\{1\}}(M)(1) & \mathcal{R}^{\{1\}}(N)(1) \\};

            \path[commutative diagrams/.cd, every arrow, every label]
            (m-1-1) edge node {$(f(n)_1,f(n)_2,\dots,f(n)_n)$} (m-1-2)
            edge node[swap] {$-\circ_1\ast \circ_2\ast \dots \widehat{\circ_k\ast} \dots \circ_n \ast $}(m-2-1)
            (m-2-1) edge node {$f(n)_k(\ast,\ast,\dots,-,\dots,\ast)$} (m-2-2)
            (m-1-2) edge node {$-\circ_1\ast \circ_2\ast \dots \widehat{\circ_k\ast} \dots \circ_n \ast $} (m-2-2);
        \end{tikzpicture}
    \end{center}
    Since $f$ is an operad morphism, this implies $f(n)_k(\ast,\ast,\dots,-,\dots,\ast)=f(1)$ for each $k$. It follows that $f(n)=f(1)^n$ and we deduce that the functor is full.
\end{proof}

\begin{definition}
    An \emph{intersection operad} $P$ is an operad in the \emph{image} of the functor \[\Realize[pow=\set{1}]: \CategoryOfIntersectionMonoids(\vC[scr])\to \Operads(\vC[scr]).\] We will denote the image of $\Realize[pow=\set{1}]$ by $\CategoryOfIntersectionOperads(\vC[scr])$.
\end{definition}

\Cref{lemma: equivalence of intersection monoids and intersection operads} tells us that $\CategoryOfIntersectionMonoids(\vC[scr])$ and $\CategoryOfIntersectionOperads(\vC[scr])$ are isomorphic categories. Given an intersection operad $P$, we will interpret $P(1)$ as the intersection monoid that underlies it. Observe that we have $\Realize[pow=G]{P(1)}$ as a suboperad of $\Coinduced[pow=G,e]P$. This leads to our choice in the following definition.

\begin{definition}
    Let $P$ be an intersection operad. The \emph{$G$-incomplete coinduction} $\Coinduced[pow=G,e,uline]{P}$ of $P$ is the operad $\Realize[pow=G]{P(1)}$.
\end{definition}

\section{Incomplete Indexing Operad}\label{section: indexing operad}

In this section we will define a useful operad that we will leverage in the next section. The purpose of this operad is to encode which components $x(g,i)$ of an element $x\in\Realize[pow=G]{M}$ are intersecting. We start with an operad that can encode any intersection between components:

\begin{definition}\label{definition:incomplete_indexing_operad}
    The \emph{complete indexing operad} $\IncompleteIndexOperad[pow=G]$ is a $\Poset^G$-valued operad where each level $\IncompleteIndexOperad[pow=G](n)$ the set of simple undirected graphs on the vertex set $G\times \vn[finset]$ such that for any $g\in G$ and $i,j\in\vn[finset]$, there is no edge $\edge{(g,i)}{(g,j)}$. This forms a poset under graph inclusion.

    There is a natural $G\times \Sigma_n$-action on $\IncompleteIndexOperad[pow=G](n)$, where for $\vk[graph]\in \IncompleteIndexOperad[pow=G](n)$, $g\in G$ and $\sigma\in\Sigma_n$, the graph $(g,\sigma)\cdot \vk[graph]$ is the graph where the edge $\edge{(h_1,i_1)}{(h_2,i_2)}$ is in $(g,\sigma)\cdot \vk[graph]$ if $\vk[graph]$ has the edge $\edge{(\vg[inv]h_1,\vsigma[inv]i_1)}{(\vg[inv]h_2,\vsigma[inv]i_2)}$.

    We get an operad structure as follows: given graphs $\vg[graph]\in \IncompleteIndexOperad[pow=G]{n}$ and $\vk[graph]\in \IncompleteIndexOperad[pow=G]{m}$, we define the $\circ_i$-composition $\vg[graph]\circ_i\vk[graph]$ as the graph where the edge $\edge{(g_1,j_1)}{(g_2,j_2)}$ is in $\vg[graph]\circ_i\vk[graph]$ if 
    \begin{enumerate}
        \item $\edge{(g_1,\Combine[i]{j_1})}{(g_2,\Combine[i]{j_2})}$ is an edge of $\vg[graph]$; and
        \item if $\Combine[i]{j_1}=\Combine[i]{j_2}=i$, then $\edge{(g_1,\Shift[i]{j_1})}{(g_2,\Shift[i]{j_2})}$ is an edge of $\vk[graph]$.
    \end{enumerate}
    The unit of the operad $\IncompleteIndexOperad[pow=G]$ is the complete graph on the vertex set $G\times \underline{1}$. The verification that $\IncompleteIndexOperad[pow=G]$ is an operad is straight forward, but tedious. The interested reader can find the details in \cref{appendix:indexing_is_an_operad}.
\end{definition}

We will want suboperads of $\IncompleteIndexOperad[pow=G]$ that encode only intersections possible for a specified transfer system $\tau\in\TransferSystemsOf{G}$. To construct this, let us introduce some notation and terminology. Let $S \subseteq G$, and suppose we have a function of the form $\alpha: S\to \vn[finset]$. We will denote the graph of this function by $\GraphOf{\alpha}$. i.e., \[\GraphOf{\alpha}= \set[\bigg]{(s,\valpha{s})\given s\in S}.\]
\begin{definition}
    Suppose we have \emph{subsets} $T \subseteq S \subseteq G$. A graph $\vg[graph]\in \IncompleteIndexOperad[pow=G](n)$ has a \emph{$S/T$-complete subgraph at position $i\in\vn[finset]$} if there exists a function $\alpha\colon S\to \vn[finset]$ such that $\valpha[inv]{i}=T$ and the complete graph on the vertex set $\vGamma{\alpha}$, written $\CompleteGraph{\vGamma{\alpha}}$, is a subgraph of $\vg[graph]$. We call $\alpha\colon S\to \vn[finset]$ the \emph{underlying function} for the $S/T$-complete subgraph at position $i$.
\end{definition}

\begin{lemma}\label{lemma: twists of incomplete graph}
    Let $\vg[graph]\in \IncompleteIndexOperad[pow=G]{n}$, $\vk[graph]\in \IncompleteIndexOperad[pow=G]{m}$, $T \subseteq S \subseteq G$ be subsets and write \[A=\Combine[i,pow={n,m},spar,inv]{i}=\set{i,i+1,\dots,i+m-1}.\] Suppose $\vg[graph]\circ_i \vk[graph]$ has a $S/T$-complete subgraph at position $j\in \underline{n+m-1}$. Then \begin{enumerate}
        \item if $j\notin A$, then $\vg[graph]$ has a $S/T$-complete subgraph at position $\Combine[i,pow={n,m}]{j}$, or 
        \item if $j\in A$, then there exists a subset $R \subseteq G$, with $T \subseteq R \subseteq S$ such that $\vg[graph]$ has a $S/R$-complete subgraph at position $i$ and $\vk[graph]$ has a $R/T$-complete subgraph at position $j-i+1$.
    \end{enumerate}
\end{lemma}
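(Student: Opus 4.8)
The plan is to unwind the definition of $\circ_i$-composition of graphs given in \cref{definition:incomplete_indexing_operad} and track how an $S/T$-complete subgraph at position $j$ of $\vg[graph]\circ_i\vk[graph]$ decomposes. So suppose $\vg[graph]\circ_i\vk[graph]$ has an $S/T$-complete subgraph at position $j$ with underlying function $\alpha\colon S\to\underline{n+m-1}$, meaning $\alpha^{-1}(j)=T$ and $\CompleteGraph{\GraphOf{\alpha}}$ is a subgraph of $\vg[graph]\circ_i\vk[graph]$. The key structural fact I would extract first: for any $s_1,s_2\in S$ with $s_1\neq s_2$ that lie in the same $G$-row is impossible (vertices $(s,\alpha(s))$ for distinct $s$ can share an edge only if $s_1\neq s_2$, which is automatic since $\alpha$ has domain $S\subseteq G$ — wait, the obstruction is rather that $\GraphOf{\alpha}$ has at most one vertex per row, so completeness is not obstructed by the ``no edge in a row'' condition). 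What actually matters is: an edge $\edge{(s_1,\alpha(s_1))}{(s_2,\alpha(s_2))}$ is in $\vg[graph]\circ_i\vk[graph]$ iff (i) $\edge{(s_1,\Combine[i]{\alpha(s_1)})}{(s_2,\Combine[i]{\alpha(s_2)})}$ is an edge of $\vg[graph]$, and (ii) if both $\Combine[i]{\alpha(s_1)}=\Combine[i]{\alpha(s_2)}=i$ then $\edge{(s_1,\Shift[i]{\alpha(s_1)})}{(s_2,\Shift[i]{\alpha(s_2)})}$ is an edge of $\vk[graph]$.

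Case 1: $j\notin A$. Then $\Combine[i,pow={n,m}]$ is injective on $A^c$ near $j$ and, more relevantly, $\Combine[i]{j}\neq i$ (since the preimage of $i$ under $\Combine[i]$ is exactly $A$). Set $\beta=\Combine[i]\circ\alpha\colon S\to\underline n$. I claim $\beta^{-1}(\Combine[i]{j})=T$: the inclusion $T\subseteq\beta^{-1}(\Combine[i]{j})$ is clear; for the reverse, if $\beta(s)=\Combine[i]{j}$ then $\Combine[i]{\alpha(s)}=\Combine[i]{j}$, and since $\Combine[i]{j}\neq i$ and $\Combine[i]$ is injective off $A$ (it is injective everywhere except it sends all of $A$ to $i$), we get $\alpha(s)=j$, i.e. $s\in T$. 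Then, for any $s_1\neq s_2\in S$, condition (ii) above is vacuous because we never have $\Combine[i]{\alpha(s_\ell)}=i$ unless $\alpha(s_\ell)\in A$ — but if $\alpha(s_\ell)\in A$ for some $\ell$ then actually one checks both must be, but $\beta$ would collapse... more carefully: completeness of $\CompleteGraph{\GraphOf{\alpha}}$ in $\vg[graph]\circ_i\vk[graph]$ plus (i) immediately gives that $\CompleteGraph{\GraphOf{\beta}}$ is a subgraph of $\vg[graph]$. So $\vg[graph]$ has an $S/T$-complete subgraph at position $\Combine[i]{j}$. (I should double-check the edge case where $\alpha$ hits $A$ at all — but any $s$ with $\alpha(s)\in A$ would have $\beta(s)=i$, and then for two such the edge in $\vg[graph]$ would be a loop in row... no, a single vertex $(s,i)$, fine; the constraint is only that $\GraphOf{\beta}$ remains a legitimate vertex set, which it is.)

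Case 2: $j\in A$. Set $R=\alpha^{-1}(A)\subseteq S$. First, $T=\alpha^{-1}(j)\subseteq\alpha^{-1}(A)=R$ since $j\in A$. Define $\alpha'\colon S\to\underline n$ by $\alpha'=\Combine[i]\circ\alpha$; then $\alpha'^{-1}(i)=\alpha^{-1}(\Combine[i]^{-1}(i))=\alpha^{-1}(A)=R$. By condition (i) applied to all pairs of vertices in $\GraphOf{\alpha}$, the complete graph on $\GraphOf{\alpha'}$ is a subgraph of $\vg[graph]$; hence $\vg[graph]$ has an $S/R$-complete subgraph at position $i$ with underlying function $\alpha'$. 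Next define $\alpha''\colon R\to\underline m$ by $\alpha''=\Shift[i]\circ(\alpha|_R)$, which makes sense since $\alpha(R)\subseteq A=\Domain{\Shift[i]}$. Then $\alpha''^{-1}(j-i+1)=(\alpha|_R)^{-1}(\Shift[i]^{-1}(j-i+1))=(\alpha|_R)^{-1}(j)=T$ (using $T\subseteq R$ so restricting doesn't lose anything). For the completeness: take $s_1\neq s_2\in R$; then $\alpha(s_1),\alpha(s_2)\in A$ so $\Combine[i]{\alpha(s_1)}=\Combine[i]{\alpha(s_2)}=i$, hence condition (ii) is active, giving that $\edge{(s_1,\Shift[i]{\alpha(s_1)})}{(s_2,\Shift[i]{\alpha(s_2)})}=\edge{(s_1,\alpha''(s_1))}{(s_2,\alpha''(s_2))}$ is an edge of $\vk[graph]$. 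So $\CompleteGraph{\GraphOf{\alpha''}}$ is a subgraph of $\vk[graph]$, i.e. $\vk[graph]$ has an $R/T$-complete subgraph at position $j-i+1$. Finally $R\subseteq S$ by construction, completing the required inclusions $T\subseteq R\subseteq S$.

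The main obstacle I anticipate is purely bookkeeping: being careful that $\Combine[i]$ and $\Shift[i]$ have the domains/codomains I am using (in particular that $\Shift[i]$ is only defined on $A$, so restricting $\alpha$ to $R=\alpha^{-1}(A)$ is exactly what is needed), and verifying the preimage identities $\alpha'^{-1}(i)=R$ and $\alpha''^{-1}(j-i+1)=T$ without an off-by-one error — the definition $\Shift[i,pow=m](k)=k-i+1$ and $\Combine[i]^{-1}(i)=\{i,\dots,i+m-1\}=A$ are the two facts that make these go through. There is no genuine mathematical difficulty beyond this; the argument is a direct unwinding of the two-clause definition of graph composition.
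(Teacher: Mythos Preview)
Your proposal is correct and follows essentially the same route as the paper's proof: define $\beta=\Combine[i]\circ\alpha$ (your $\alpha'$), set $R=\beta^{-1}(i)=\alpha^{-1}(A)$, and define $\gamma=\Shift[i]\circ\alpha|_R$ (your $\alpha''$), then use clause (i) of the composition definition to see $\CompleteGraph{\GraphOf{\beta}}\subseteq\vg[graph]$ and clause (ii) to see $\CompleteGraph{\GraphOf{\gamma}}\subseteq\vk[graph]$. The only organizational difference is that the paper proves these two subgraph claims once (as Claims 1 and 2) and then reads off both cases, whereas you split into cases first; you are also a bit more explicit than the paper in verifying the preimage identities $\beta^{-1}(\Combine[i]{j})=T$ (via injectivity of $\Combine[i]$ off $A$) and $\gamma^{-1}(j-i+1)=T$, which the paper essentially asserts.
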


\begin{proof}
    Suppose $\alpha:S\to \underline{m+n-1}$ is the function underlying the $S/T$-complete subgraph at position $j$ for $\vg[graph]\circ_i \vk[graph]$. Define $\beta: S\to \vn[finset]$ as the composite $\beta:= \Combine[i,pow={n,m}]\circ \alpha$.

    \emph{Claim 1: if $\CompleteGraph{\GraphOf{\alpha}}$ is a subgraph of $\vg[graph]\circ_i\vk[graph]$, then $\CompleteGraph{\GraphOf{\beta}}$ is a subgraph of $\vg[graph]$.} 
    This follows as suppose $\CompleteGraph{\GraphOf{\beta}}$ wasn't a subgraph of $\vg[graph]$. Then there is some edge $\edge{(\vg_1,\vbeta{\vg_1})}{(\vg_2,\vbeta{\vg_2})}$ not in $\vg[graph]$. From the definition of the operad composition, the graph $\vg[graph]\circ_i\vk[graph]$ must not have the edge $\edge{(g_1,\valpha{g_1})}{(g_2,\valpha{g_2})}$ which contradicts $\CompleteGraph{\GraphOf{\alpha}}$ being a subgraph of $\vg[graph]\circ_i\vk[graph]$. This justifies the claim.

    If $\Image(\valpha)\cap A\neq\emptyset$, let us set $R:=\vbeta[inv]{i}$ and define a further map $\gamma:R\to \vm[finset]$ by the composite $\gamma=\Shift[i]\circ\restr{\alpha}{R}$.
    
    \emph{Claim 2: if $\CompleteGraph{\GraphOf{\alpha}}$ is a subgraph of $\vg[graph]\circ_i\vk[graph]$, then $\CompleteGraph{\GraphOf{\gamma}}$ is a subgraph of $\vk[graph]$.}
    Suppose $\CompleteGraph{\GraphOf{\gamma}}$ wasn't a subgraph of $\vk[graph]$, then there is some edge $\edge{(\vg_1,\vgamma{\vg_1})}{(\vg_2,\vgamma{\vg_2})}$ not in $\vk[graph]$. Again, by the operad composition, this would imply $\edge{(\vg_1,\valpha{\vg_1})}{(\vg_2,\valpha{\vg_2})}$ not in $\CompleteGraph{\GraphOf{\alpha}}$ which is a contradiction. Hence, we have proven the claim.

    The lemma follows from these two claims. If $j\notin A$, then $\vbeta[inv]{\Combine[i,pow={n,m}](j)}=\valpha[inv]{j}=T$ and so $\vbeta$ underlies a $S/T$-complete subgraph at position $\Combine[i,pow={n,m}](j)$. If $j\in A$, then similarly, $\vbeta$ underlies a $S/R$-complete subgraph at position $\Combine[i,pow={n,m}](j)=i$, and $\vgamma$ underlies a $R/T$-complete subgraph at $\Shift[i](j)=j-i+1$.
\end{proof}

\begin{definition}\label{definition: twist maps}
	A \emph{twist map for $G\times \vn[finset]$} is a map $\alpha\colon S\to \vn[finset]$ where $S \subseteq G$, and there exists a $\vg[0]\in G$ such that $S\vg[0]$ is a subgroup of $G$. We denote the set of all twists maps for $G\times \vn[finset]$ by $\TwistMaps{G,\vn[finset]}$. 
    A twist map $\alpha\colon S\to \vn[finset]$ \emph{structures a transfer $K\to H$ at position $i$} if there exists a $\vg[0]\in G$, and $i\in \Image{\valpha}$ such that $S\vg[0]=H$ and $K \subseteq \valpha[inv]{i}\vg[0]$ is a maximal inclusion among subgroups. i.e., if we have another subgroup $\vK[']$ where $K \subseteq \vK['] \subseteq \valpha[inv]{i}\vg[0]$, then $K=\vK[']$. 
\end{definition}

The set $\TwistMaps{G,\vn[finset]}$ has a $G\times\Sigma_n$-action where if $(\alpha\colon S\to \vn[finset])\in \TwistMaps{G,\vn[finset]}$, then $(g,\sigma)\cdot \alpha\colon\vg S\to \vn[finset]$ is the map given by $\left((g,\sigma)\cdot \alpha\right)(h)=\vsigma\alpha(\vg[inv]h)$. Note that if $S\vg[0]=H$, then $gS\vg[0]\vg[inv] = \prescript{g}{}{H}$. 
Hence, if $\vbeta= (g,\sigma)\cdot\alpha$, and $\alpha$ structures a transfer $K\to H$ at position $i$, 
then we have $\Domain{\vbeta}\vg[0]\vg[inv] = \vg\vS\vg[0]\vg[inv] = \prescript{g}{}{H}$, and $\vbeta[inv]{\vsigma{i}}\vg[0]\vg[inv]=g\valpha[inv]{i}\vg[0]\vg[inv] \subseteq  \prescript{g}{}{K}$ which is maximal. 
Hence, $\vbeta$ structures the transfer $\prescript{g}{}{K}\to \prescript{g}{}{H}$ at position $\sigma(i)$.
This observation implies the following is well-defined.

\begin{definition}
Let $\tau\in \TransferSystemsOf{G}$, we define the following $G\times\Sigma_n$-subsets \[\TwistMaps[pow=\tau]{\vn[finset],G}:= \set[\bigg]{\alpha \in \TwistMaps{\vn[finset],G}\given \text{If }\alpha\text{ structures }K\to H,\text{ then }K\Transfer{\tau} H}.\]
\end{definition}

\begin{definition}
    We will say that $\vg[graph]\in \IncompleteIndexOperad[pow=G]{n}$ \emph{supports a transfer $K\to H$ at position $i$} if $\vg[graph]$ has a $S/T$-complete subgraph at position $i$ where the underlying map $\alpha:S\to \vn[finset]$ is a twist map that structures a transfer $K\to H$ at position $i$. We say the function $\alpha:S\to \vn[finset]$ underlying the $S/T$-complete subgraph \emph{exhibits} the transfer $K\to H$.
\end{definition}

Explicitly, this says that we have $(\alpha:S\to \vn[finset])\in \TwistMaps(G,\vn[finset])$ and $g_0\in G$ where $\CompleteGraph{\alpha} \subseteq \vg[graph]$, $H=S\vg[0]$, and $K \subseteq \valpha[inv]{i}\vg[0]$ maximally. 

\begin{definition}
    For a transfer system $\tau\in\TransferSystemsOf{G}$, we define the \emph{$\tau$-incomplete indexing $G$-operad} $\IncompleteIndexOperad[pow=\tau]$ by:
\begin{align*}
    \SwapAboveDisplaySkip
    \IncompleteIndexOperad[pow=\tau](n) := \set[\bigg]{\vg[graph]\in \IncompleteIndexOperad[pow=G](n)\given \text{If }\vg[graph]\text{ supports a transfer }K\to H, \text{ then } K\Transfer{\tau} H}.
\end{align*}
\end{definition}

\begin{lemma}
    For a transfer system $\tau\in\TransferSystemsOf{G}$, the $G$-symmetric sequence $\IncompleteIndexOperad[pow=\tau]$ is a $G$-operad.
\end{lemma}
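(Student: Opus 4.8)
The plan is to verify that $\IncompleteIndexOperad[pow=\tau]$ is a sub-$G$-operad of $\IncompleteIndexOperad[pow=G]$: that it contains the unit, is stable under the $G\times\Sigma_n$-actions, and is closed under the $\circ_i$-compositions. For the unit, observe that the complete graph on $G\times\underline{1}$ supports only transfers of the form $H\to H$, because an underlying twist map $\alpha\colon S\to\underline{1}$ is constant, so that with $Sg_0=H$ the subgroup $K$ must be maximal among subgroups contained in $\alpha^{-1}(1)g_0=Sg_0=H$, forcing $K=H$; and $H\Transfer{\tau}H$ by reflexivity of $\tau$. For stability under $(g,\sigma)$, the key identity is $\CompleteGraph{\GraphOf{(g,\sigma)\cdot\alpha}}=(g,\sigma)\cdot\CompleteGraph{\GraphOf{\alpha}}$, immediate from the two action formulas; combined with the $G\times\Sigma_n$-equivariance of ``structuring a transfer'' noted just before the definition of the $\tau$-incomplete twist maps (an action by $(g,\sigma)$ turns structuring $K\to H$ into structuring $\prescript{g}{}{K}\to\prescript{g}{}{H}$), this shows that if $(g,\sigma)\cdot\vg[graph]$ supports $K\to H$ then $\vg[graph]$ supports $\prescript{g^{-1}}{}{K}\to\prescript{g^{-1}}{}{H}$, hence $\prescript{g^{-1}}{}{K}\Transfer{\tau}\prescript{g^{-1}}{}{H}$, hence $K\Transfer{\tau}H$ by the conjugation axiom of transfer systems.

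The real content is closure under $\circ_i$, and the tool is \cref{lemma: twists of incomplete graph}. Let $\vg[graph]\in\IncompleteIndexOperad[pow=\tau](n)$ and $\vk[graph]\in\IncompleteIndexOperad[pow=\tau](m)$, and suppose $\vg[graph]\circ_i\vk[graph]$ supports a transfer $K\to H$ at some position $j$: there is then a twist map $\alpha\colon S\to\underline{n+m-1}$ with $\CompleteGraph{\GraphOf{\alpha}}\subseteq\vg[graph]\circ_i\vk[graph]$, an element $g_0$ with $Sg_0=H$, and $K$ maximal among subgroups contained in $\alpha^{-1}(j)g_0$. Set $T=\alpha^{-1}(j)$ and $A=\{i,\dots,i+m-1\}$. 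If $j\notin A$ (this includes the degenerate composition, i.e.\ the case $m=0$, where $A=\emptyset$), then \cref{lemma: twists of incomplete graph}(1) gives an $S/T$-complete subgraph of $\vg[graph]$ at position $\Combine[i]{j}$; its underlying function $\beta\colon S\to\underline{n}$ has domain $S$ and fibre $\beta^{-1}(\Combine[i]{j})=T$, so $\beta$ is a twist map structuring $K\to H$ at position $\Combine[i]{j}$, whence $\vg[graph]$ supports $K\to H$ and so $K\Transfer{\tau}H$ because $\vg[graph]\in\IncompleteIndexOperad[pow=\tau]$.

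The case $j\in A$ is the crux. Here \cref{lemma: twists of incomplete graph}(2) produces $R$ with $T\subseteq R\subseteq S$, an $S/R$-complete subgraph of $\vg[graph]$ at position $i$ with underlying function $\beta$ satisfying $\beta^{-1}(i)=R$, and an $R/T$-complete subgraph of $\vk[graph]$ at position $j-i+1$ with underlying function $\gamma$ satisfying $\gamma^{-1}(j-i+1)=T$. Among the subgroups $K''$ with $K\subseteq K''\subseteq Rg_0$ --- there is at least one, namely $K$ itself, since $K\subseteq Tg_0\subseteq Rg_0$ --- choose a maximal one, $K'$; because any subgroup of $Rg_0$ containing $K'$ also contains $K$, the subgroup $K'$ is in fact maximal among all subgroups contained in $Rg_0$. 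Then $\beta$ (domain $S$, with $Sg_0=H$) is a twist map structuring $K'\to H$ at position $i$, so $\vg[graph]$ supports $K'\to H$ and hence $K'\Transfer{\tau}H$. For $\vk[graph]$ the subtlety is that $\gamma$ itself need not be a twist map, so instead one restricts it along $R':=K'g_0^{-1}$, which lies in $R$ because $K'\subseteq Rg_0$. The restriction $\delta:=\restr{\gamma}{R'}$ is a twist map since $R'g_0=K'$ is a subgroup; its fibre over $j-i+1$ is $T\cap K'g_0^{-1}$, which contains $Kg_0^{-1}$ and so is nonempty; and $K$ is maximal among subgroups contained in $(T\cap K'g_0^{-1})g_0=Tg_0\cap K'$, because it is already maximal in the larger set $Tg_0$. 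Hence $\delta$ structures $K\to K'$ at position $j-i+1$, and $\CompleteGraph{\GraphOf{\delta}}\subseteq\CompleteGraph{\GraphOf{\gamma}}\subseteq\vk[graph]$, so $\vk[graph]$ supports $K\to K'$ and $K\Transfer{\tau}K'$; transitivity of $\tau$ now gives $K\Transfer{\tau}H$. I expect the one genuinely non-routine step to be exactly this last one: realizing that the function $\gamma$ coming out of \cref{lemma: twists of incomplete graph} must be cut down to the coset $K'g_0^{-1}$ so that it becomes a twist map whose structured transfer is precisely $K\to K'$, and then checking that the maximality of $K$ survives the passage to the smaller fibre.
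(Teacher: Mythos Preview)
Your proof is correct and follows essentially the same route as the paper's: reduce to closure under $\circ_i$, invoke \cref{lemma: twists of incomplete graph} to split into the two cases $j\notin A$ and $j\in A$, and in the latter case pick a maximal subgroup $K'$ (the paper calls it $L$) with $K\le K'\subseteq Rg_0$ so that $\beta$ exhibits $K'\to H$ in $\vg[graph]$, then restrict $\gamma$ to the coset $K'g_0^{-1}$ to obtain a genuine twist map exhibiting $K\to K'$ in $\vk[graph]$, and conclude by transitivity of $\tau$. Your treatment is in fact slightly more explicit than the paper's in two places: you spell out why the unit supports only trivial transfers $H\to H$, and you justify the $G\times\Sigma_n$-stability via the identity $\CompleteGraph{\GraphOf{(g,\sigma)\cdot\alpha}}=(g,\sigma)\cdot\CompleteGraph{\GraphOf{\alpha}}$, whereas the paper simply asserts these.
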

\begin{proof}
    First note that $\IncompleteIndexOperad[pow=\tau](n)$ is a $G\times\Sigma_n$-subset of $\IncompleteIndexOperad[pow=G](n)$, and the unit $\CompleteGraph{G\times\underline{1}}\in \IncompleteIndexOperad[pow=\tau]{1}$. Therefore, we only need to check that $\IncompleteIndexOperad[pow=G]$ is closed under composition. 
    
    Let $\vg[graph]\in \IncompleteIndexOperad[pow=\tau]{n}$ and $\vk[graph]\in \IncompleteIndexOperad[pow=\tau]{m}$, and set $\vh[graph]=\vg[graph]\circ_i\vk[graph]$. Suppose $\vh[graph]$ supports a transfer $K\to H$ which is exhibited by the function $\valpha:S\to \underline{m+n-1}$ which underlies a $S/T$-complete subgraph at position $j\in\underline{m+n-1}$. We need to show that $K\Transfer{\tau} H$. 

    Suppose $j\notin \Combine[i,inv]{i}$. In this case, \cref{lemma: twists of incomplete graph} implies that $\vg[graph]$ supports a $K\to H$ transfer, and so $K\Transfer{\tau}H$ by definition.

    Let us then consider the alternate case when $j\in \Combine[i,inv]{i}$. In this case, let $g_0\in G$ be the element such that $Sg_0=H$ and $K \subseteq \valpha[inv]{i}g_0$ maximally.
    Define the functions $\beta:S\to \vn[finset]$, and $\gamma:R\to \vm[finset]$ where $R=\vbeta[inv]{i}$ as in the proof of \cref{lemma: twists of incomplete graph}. 
    Here, $\beta$ underlies a $S/R$-complete subgraph for $\vg[graph]$ at position $i$, and $\gamma$ a $R/T$-complete subgraph for $\vk[graph]$ at position $j-i+1$. 
    Let $L$ be a maximal subgroup such that $K \leq L \subseteq R\vg_0$, and observe this implies $\beta$ exhibits the transfer $L\to H$. Since $\vg[graph]\in \IncompleteIndexOperad[pow=\tau]{n}$, by definition, we have $L\Transfer{\tau} H$. 
    
    Since $\gamma$ underlies a $R/T$-complete subgraph of $\vk[graph]$ at position $j-i+1$, it follows \[K \subseteq T\vg[0] = \vgamma[inv]{j-i+1}\vg[0]\subseteq R\vg[0].\] We also have $K\subseteq L \subseteq R\vg[0]$, and so $K \subseteq T\vg[0]\cap L \subseteq L \subset R\vg[0]$. 
    If we set $\vR[']= L\vg[0,inv]$ and $\vgamma[']=\restr{\gamma}{\vR[']}$, then \[K \subseteq T\vg[0]\cap \vR[']g_0 =\vgamma[',spar,inv](j-i+1)\vg[0]\subseteq \vR[']g_0 \subset R\vg[0].\] Moreover, $K \subseteq T\vg[0]\cap \vR[']g_0$ maximally, since otherwise this would contradict $K \subseteq T\vg[0]$ maximally. 
    Hence, the restricted map $\vgamma[']$ underlies a $\vR[']/(\vT\cap\vR['])$-complete subgraph of $\vk[graph]$ at position $j-i+1$, and $\vgamma[']$ structures a transfer $K\to L$. In other words, $\vgamma[']$ exhibits a transfer $K\to L$ for $\vk[graph]$, and so by definition $K\Transfer{\tau} L$. 
    
    Since both $K\Transfer{\tau} L$ and $L\Transfer{\tau} H$, we deduce that $K\Transfer{\tau} H$ since transfer systems are closed under compositions.
\end{proof}

\begin{warning}\label{warning: Bonventre graph version}
It is not the case that $\TransfersOf{\IncompleteIndexOperad[pow=\tau]}=\tau$. As an example, let us consider the following case. Let $G= C_4\inner{\sigma}$ and consider the transfer system given by \[\tau=\set[\bigg]{e\to C_2}.\]
Consider the following graph $\vg[graph]\in \IncompleteIndexOperad[pow=G](2)$ given by:
\begin{center}
\begin{tikzpicture}[commutative diagrams/every diagram]
    \matrix[matrix of math nodes, name=m, commutative diagrams/every cell] {
    (e,1) & (e,2) \\
    (\sigma^2,1) & (\sigma^2,2) \\
    (\sigma,1) & (\sigma,2) \\
    (\sigma^3,1) & (\sigma^3,2) \\
    };

    \path[commutative diagrams/.cd, every label]
    (m-1-1) edge (m-2-2)
    (m-2-1) edge (m-1-2)
    (m-3-1) edge (m-4-2)
    (m-4-1) edge (m-3-2);
\end{tikzpicture}
\end{center}
The graph $\vg[graph]$ only supports transfers of the form $e\to C_2$, and so $\vg[graph]\in \IncompleteIndexOperad[pow=\tau](2)$. However, we have $(\sigma,(12))\cdot \vg[graph]= \vg[graph]$ which implies that $C_2\to C_4\in \TransfersOf{\IncompleteIndexOperad[pow=\tau]}$.
\end{warning}

\section{Realizing Transfer Systems}\label{section: realizing transfer systems}

In order to define the realization operad of a transfer system, we will extend the idea of support of a transfer to the complete realization operads $\Realize[pow=G]{M}$ by saying that $x\in \Realize[pow=G]{M}$ \emph{supports} a transfer $K\to H$ at position $i$ if there exists a twist map $\alpha\in\TwistMaps(G,\vn[finset])$ that structures a transfer $K\to H$ at position $i$ and for all $\vg[1],\vg[2]\in \Domain{\alpha}$, we have that \[x(\vg[1],\valpha{\vg[1]})\Intersect x(\vg[2],\valpha{\vg[2]}).\]

This leads to the following definition.
\begin{definition}
    Let $\tau\in \TransferSystemsOf{G}$ and $M$ a non-trivial intersection monoid. The \emph{$\tau$-incomplete realization by $M$} is the following $G$-symmetric sequence 
    \begin{align*}
        \Realize[pow=\tau]{M}(n) := \set[\bigg]{x\in \Realize[pow=G]{M}{n}\given \text{If }x\text{ supports a transfer }K\to H\text{, then }K\Transfer{\tau}H}.
    \end{align*}
\end{definition}

In this section we will first prove that $\Realize[pow=\tau]{M}$ is a well-defined $G$-suboperad of $\Realize[pow=G]{M}$, and then show that $\TransfersOf{\Realize[pow=\tau]{M}}=\tau$.

Let us first recall some ideas about poset valued operads. Given an operad of $G$-posets $P$, a \emph{lower} $G$-suboperad $Q \subseteq P$ is one such that each level $Q(n)$ is equivariantly downward closed inside $P(n)$. For the suboperads $\IncompleteIndexOperad[pow=\tau]$ of $\IncompleteIndexOperad[pow=G]$, since removing edges can't introduce any new complete subgraphs, we automatically obtain:

\begin{lemma}\label{lemma: incomplete are lower suboperads}
    The $G$-suboperads $\IncompleteIndexOperad[pow=\tau]$ are lower $G$-suboperads of $\IncompleteIndexOperad[pow=G]$.
\end{lemma}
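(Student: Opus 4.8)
The plan is to unwind the definition of $\IncompleteIndexOperad[pow=\tau]$ and verify directly that each level $\IncompleteIndexOperad[pow=\tau](n)$ is a downward-closed subset of $\IncompleteIndexOperad[pow=G](n)$; the equivariance of this subset is already in hand, since the previous lemma shows $\IncompleteIndexOperad[pow=\tau]$ is a $G$-operad and in particular each $\IncompleteIndexOperad[pow=\tau](n)$ is a $G\times\Sigma_n$-subset of $\IncompleteIndexOperad[pow=G](n)$. So the only thing left to check is downward closure.

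To do that I would fix $n$, take $\vg[graph]\in\IncompleteIndexOperad[pow=\tau](n)$ and an arbitrary subgraph $\vh[graph]\subseteq\vg[graph]$ with $\vh[graph]\in\IncompleteIndexOperad[pow=G](n)$, and show $\vh[graph]\in\IncompleteIndexOperad[pow=\tau](n)$. Suppose $\vh[graph]$ supports a transfer $K\to H$ at some position $i\in\vn[finset]$. By definition, this means there is a twist map $\alpha\colon S\to\vn[finset]$ structuring the transfer $K\to H$ at position $i$ such that $\CompleteGraph{\GraphOf{\alpha}}$ is a subgraph of $\vh[graph]$. Since subgraph inclusion is transitive, $\CompleteGraph{\GraphOf{\alpha}}\subseteq\vh[graph]\subseteq\vg[graph]$, so the same twist map $\alpha$ witnesses that $\vg[graph]$ has an $S/\alpha^{-1}(i)$-complete subgraph at position $i$ with the same underlying function, hence $\vg[graph]$ supports the transfer $K\to H$ at position $i$. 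Because $\vg[graph]\in\IncompleteIndexOperad[pow=\tau](n)$, the defining condition gives $K\Transfer{\tau}H$. As the supported transfer and its position were arbitrary, every transfer supported by $\vh[graph]$ lies in $\tau$, i.e.\ $\vh[graph]\in\IncompleteIndexOperad[pow=\tau](n)$, which is precisely the statement that $\IncompleteIndexOperad[pow=\tau](n)$ is downward closed in $\IncompleteIndexOperad[pow=G](n)$.

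I do not expect any real obstacle: the argument is a one-line monotonicity observation dressed in the definitions. The only point requiring care is keeping the direction of the inclusion straight — ``supports a transfer'' is phrased purely via \emph{containment} of a complete subgraph $\CompleteGraph{\GraphOf{\alpha}}$, and containment is preserved when the ambient graph is enlarged. This is exactly the remark preceding the lemma, read contrapositively: removing edges cannot create new complete subgraphs, equivalently adding edges can only increase the family of complete subgraphs, hence only increase the family of transfers supported. Combining downward closure of each level with the already-established equivariance yields that $\IncompleteIndexOperad[pow=\tau]$ is a lower $G$-suboperad of $\IncompleteIndexOperad[pow=G]$.
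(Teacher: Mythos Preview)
Your proposal is correct and follows exactly the paper's approach: the paper states this lemma as an immediate consequence of the observation that removing edges cannot introduce new complete subgraphs, and you have simply spelled out that monotonicity argument in detail. There is nothing to add.
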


We will follow the terminology of \cite{beuckelmannSmallCatalogueEnoperads2023}. Suppose we have a morphism of $G$-symmetric sequences $\phi:P\to Q$ where $P$ is a $G$-operad in sets, and $Q$ is a $G$-operad in posets. We call $\phi$ a \emph{lax operad morphism} if for any $x\in P(n)$, $y\in P(m)$, and $i\in \vn[finset]$, we have that \[\phi(gx\circ_i gy) \leq \phi(gx)\circ_i\phi(gy)\] for all $g\in G$. Given a lax morphism of $G$-operads $\phi:P\to Q$, and a lower $G$-suboperad $\vQ[~] \subseteq \vQ$, consider the following pullback of $G$-symmetric sequences:
\begin{center}
\begin{tikzpicture}[commutative diagrams/every diagram]
    \matrix[matrix of math nodes, name=m, commutative diagrams/every cell] {
    P & Q \\
    \widetilde{P} & \widetilde{Q} \\};

    \path[commutative diagrams/.cd, every arrow, every label]
    (m-2-1) edge node {$\widetilde{\phi}$} (m-2-2)
    edge[commutative diagrams/hook] (m-1-1)
    (m-1-1) edge node {$\phi$} (m-1-2)
    (m-2-2) edge[commutative diagrams/hook] (m-1-2);

    \node[draw=none, font=\Large] at ($(m-2-1)!0.2!(m-1-2)$) {$\urcorner$};
\end{tikzpicture}
\end{center}
Suppose we have $x\in \vP[~](n)$ and $y\in \vP[~](m)$. For any $i\in$, and $g\in G$, we have that \[ \phi(g\cdot x\circ_i g\cdot y) \leq \phi(g\cdot x)\circ_i\phi(g\cdot y) = \vphi[~](g\cdot x)\circ_i \vphi[~](g\cdot y)\in\vQ[~](m+n-1). \] Since $\vQ[~]$ is a lower suboperad, we see that $x\circ_i y\in \vP[~](m+n-1)$ and that the pull-back $\vP[~]$ is in fact a well-defined $G$-suboperad of $\vP$. We will record this observation as a lemma.

\begin{lemma}\label{lemma: pullback of lowersuboperads via lax are operads}
    Given a lax morphism of $G$-operads $\phi:P \to Q$ where $P\in\Operads(\Set^G)$ and $Q\in \Operads(\Poset^G)$. If $\vQ[~]$ is a lower $G$-suboperad of $\vQ$, then the pullback of the inclusion $\vQ[~]\hookrightarrow \vQ$ along $\phi$ in $G$-symmetric sequence $G$-symmetric is such that $\vP[~]:=\phi^\ast\vQ[~]$ is a $G$-suboperad of $\vP$. 
\end{lemma}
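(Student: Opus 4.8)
The plan is to make precise the diagram chase already sketched in the paragraph preceding the statement. First I would unwind the pullback levelwise: because the inclusion $\widetilde{Q}\hookrightarrow Q$ is a levelwise monomorphism, the pullback $\widetilde{P}=\phi^\ast\widetilde{Q}$ in $G$-symmetric sequences is computed degreewise as $\widetilde{P}(n)=\set{x\in P(n)\given \phi(n)(x)\in\widetilde{Q}(n)}$, a subset of $P(n)$. This subset is $G\times\Sigma_n$-stable since $\phi(n)$ is $G\times\Sigma_n$-equivariant and $\widetilde{Q}(n)$ is a $G\times\Sigma_n$-subset of $Q(n)$; hence $\widetilde{P}$ is already a sub-$G$-symmetric sequence of $P$, and all that remains is to verify that it is closed under the operad structure of $P$, i.e.\ that it contains the unit and is closed under the $\circ_i$-compositions.

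For the unit, I would use that a lax operad morphism still preserves the unit (or at worst satisfies $\phi(u_P)\le u_Q$); since $u_Q\in\widetilde{Q}(1)$ — as $\widetilde{Q}$ is a suboperad of $Q$ — and $\widetilde{Q}(1)$ is downward closed in $Q(1)$, it follows that $\phi(u_P)\in\widetilde{Q}(1)$, i.e.\ $u_P\in\widetilde{P}(1)$. For closure under composition, fix $x\in\widetilde{P}(n)$, $y\in\widetilde{P}(m)$ and $i\in\vn[finset]$; applying the defining inequality of a lax morphism (it suffices to take $g=e$, using that $\phi$ is equivariant) gives $\phi(x\circ_i y)\le\phi(x)\circ_i\phi(y)$ in $Q(n+m-1)$. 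By definition of $\widetilde{P}$ we have $\phi(x)\in\widetilde{Q}(n)$ and $\phi(y)\in\widetilde{Q}(m)$, and since $\widetilde{Q}$ is a suboperad of $Q$ this yields $\phi(x)\circ_i\phi(y)\in\widetilde{Q}(n+m-1)$; downward closure of $\widetilde{Q}(n+m-1)$ in $Q(n+m-1)$ then forces $\phi(x\circ_i y)\in\widetilde{Q}(n+m-1)$, that is, $x\circ_i y\in\widetilde{P}(n+m-1)$. This completes the argument.

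I do not expect a genuine obstacle here: the statement is essentially a bookkeeping lemma and the proof is a short formal chase. The two points that deserve a moment's care are (i) confirming that the pullback in $G$-symmetric sequences really is the levelwise, $G$-equivariant, honest subobject described above, so that the conclusion ``$\widetilde{P}$ is a $G$-suboperad of $P$'' is even meaningful; and (ii) the use of the \emph{lower}/downward-closed hypothesis on $\widetilde{Q}$, which is exactly what converts the two inequalities $\phi(u_P)\le u_Q$ and $\phi(x\circ_i y)\le\phi(x)\circ_i\phi(y)$ into the membership statements we need. I would also remark in passing that the parameter $g\in G$ in the definition of lax morphism plays no role in this particular proof — only $g=e$ is used — so the whole weight of the lemma rests on the downward-closure of $\widetilde{Q}$.
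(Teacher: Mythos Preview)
Your proposal is correct and follows essentially the same argument as the paper's: apply the lax inequality $\phi(x\circ_i y)\le\phi(x)\circ_i\phi(y)$, observe the right-hand side lies in $\widetilde{Q}$ since $\widetilde{Q}$ is a suboperad, and invoke downward closure. Your write-up is in fact more careful than the paper's sketch---you explicitly identify the levelwise pullback, verify $G\times\Sigma_n$-stability, and handle the unit, whereas the paper only records the composition step; your observation that only $g=e$ is needed is also correct and not noted there.
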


Consider the map (in $\SymSeq(\Set^G)$) $$\vp[graph]:\Realize[pow=G]{M}\to \IncompleteIndexOperad[pow=G]$$ where given $x\in \Realize[pow=G]{M}(n)$, $\vp[graph](x)$ is the graph which has an edge $\edge{(h_1,i_1)}{(h_2,i_2)}$ if the following two conditions hold:
\begin{enumerate}
    \item if $\vh[1]= \vh[2]$, then $\vi[1]\neq\vi[2]$, and 
    \item $x(h_1,i_2)\Intersect x(h_2,i_2)$.
\end{enumerate}

\begin{lemma}\label{lemma: incomplete realization is a pullback}
    The $G$-symmetric sequence $\Realize[pow=\tau]{M}$ is precisely the pull-back of the map $\vp[graph]$ along the inclusion $\iota:\IncompleteIndexOperad[pow=\tau]\hookrightarrow \IncompleteIndexOperad[pow=G]$.
\end{lemma}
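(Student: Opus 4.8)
The plan is to reduce the statement to a levelwise equality of subsets of $\Realize[pow=G]{M}$ and then to check that, across the map $\vp[graph]$, the notion of an element $x\in\Realize[pow=G]{M}$ ``supporting a transfer'' agrees with the notion of the graph $\vp[graph](x)\in\IncompleteIndexOperad[pow=G]$ ``supporting a transfer''. Since $\iota$ is a levelwise inclusion, the pullback of $\vp[graph]$ along $\iota$ is, in level $n$, simply the subset $\{x\in\Realize[pow=G]{M}(n)\mid \vp[graph](x)\in\IncompleteIndexOperad[pow=\tau](n)\}$ of $\Realize[pow=G]{M}(n)$ with the induced $G\times\Sigma_n$-action; as $\vp[graph]$ is $G\times\Sigma_n$-equivariant and $\IncompleteIndexOperad[pow=\tau](n)$ is a $G\times\Sigma_n$-subset, this is a $G$-symmetric sub-sequence of $\Realize[pow=G]{M}$, so it suffices to prove for each $n$ the set equality
\[
    \Realize[pow=\tau]{M}(n)=\{x\in\Realize[pow=G]{M}(n)\mid \vp[graph](x)\in\IncompleteIndexOperad[pow=\tau](n)\}.
\]
(Along the way one should note that $\vp[graph](x)$ really is an object of $\IncompleteIndexOperad[pow=G](n)$: if $h_1=h_2$ and $i_1\neq i_2$, then $\Sigma$-disjointness of $x$ gives $x(h_1,i_1)\Disjoint x(h_1,i_2)$, so $\vp[graph](x)$ has no edge inside a $G$-fibre.)

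The core step — and essentially the only one with content — is to show: for subgroups $K\leq H\leq G$ and a position $i\in\vn[finset]$, the element $x$ supports the transfer $K\to H$ at position $i$ if and only if the graph $\vp[graph](x)$ does. I would prove this by comparing the witnesses. Both notions of support are witnessed by a twist map $\alpha\colon S\to\vn[finset]$ that structures $K\to H$ at position $i$; the only difference is the extra condition imposed on such an $\alpha$. For $x$-support it is ``$x(g_1,\alpha(g_1))\Intersect x(g_2,\alpha(g_2))$ for all $g_1,g_2\in S$'', and for $\vp[graph](x)$-support it is ``$\CompleteGraph{\GraphOf{\alpha}}$ is a subgraph of $\vp[graph](x)$''. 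The vertices of $\GraphOf{\alpha}$ are the pairs $(s,\alpha(s))$, $s\in S$, and are pairwise distinct because their $G$-coordinates differ; hence $\CompleteGraph{\GraphOf{\alpha}}\subseteq\vp[graph](x)$ says exactly that $\vp[graph](x)$ contains the edge $\edge{(g_1,\alpha(g_1))}{(g_2,\alpha(g_2))}$ for every pair of distinct $g_1,g_2\in S$, and unwinding the definition of $\vp[graph]$ — whose clause $(1)$ is automatic here, the two endpoints lying over distinct elements of $G$ — this is precisely ``$x(g_1,\alpha(g_1))\Intersect x(g_2,\alpha(g_2))$ for all distinct $g_1,g_2\in S$''. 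Adjoining the diagonal instances $g_1=g_2$, which hold by reflexivity of $\Intersect$, recovers the full $x$-support condition. So the same twist maps witness both notions of support, which proves the core step.

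Granting this, the lemma follows at once: $x\in\Realize[pow=\tau]{M}(n)$ says ``whenever $x$ supports a transfer $K\to H$ we have $K\Transfer{\tau}H$'', while $\vp[graph](x)\in\IncompleteIndexOperad[pow=\tau](n)$ says the same thing with ``$x$ supports'' replaced by ``$\vp[graph](x)$ supports''; by the core step these hypotheses coincide, so the two sets agree, giving the displayed equality and hence the lemma. I do not anticipate a genuine obstacle here. The one point requiring care is the diagonal case $g_1=g_2$ in the edge-versus-intersection comparison, which is exactly where reflexivity of $\Intersect$ and the $\Sigma$-disjointness of $x$ do their work; the remainder is routine bookkeeping with the definitions of twist maps, $S/T$-complete subgraphs, and $\vp[graph]$.
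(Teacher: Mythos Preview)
Your proposal is correct and follows essentially the same approach as the paper's proof: both identify the pullback levelwise as $\{x\in\Realize[pow=G]{M}(n)\mid \vp[graph](x)\in\IncompleteIndexOperad[pow=\tau](n)\}$ and then reduce to the observation that $x$ supports a transfer $K\to H$ if and only if $\vp[graph](x)$ does, since edges of $\vp[graph](x)$ record precisely the intersections among components of $x$. Your treatment is simply more explicit than the paper's, which dispatches the core equivalence in one sentence; in particular your handling of the diagonal case via reflexivity of $\Intersect$ and of clause~(1) via distinct $G$-coordinates makes transparent what the paper leaves implicit.
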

\begin{proof}
    This is simply a matter of unpacking definitions.
    \begin{align*}
        \vp[pow=\ast,graph]\IncompleteIndexOperad[pow=\tau]{n} &= \set[\bigg]{x\in \Realize[pow=G]{M}{n}\given \vp[graph]{x}\in \IncompleteIndexOperad[pow=\tau]{n}} \\ 
        &= \set[\bigg]{x\in \Realize[pow=G]{M}{n}\given \text{if }\vp[graph]{x}\text{ supports a transfer }K\to H\text{, then }K\Transfer{\tau}H} \\
        &= \set[\bigg]{x\in \Realize[pow=G]{M}{n}\given \text{if }x\text{ supports a transfer }K\to H\text{, then }K\Transfer{\tau}H} \\
        &= \Realize[pow=\tau]{M}{n}
    \end{align*}
    Here we use that $\vp[graph]{x}$ supports a transfer $K\to H$, if and only if $x$ also supports a transfer $K\to H$ since edges in $\vp[graph]{x}$ correspond precisely to intersections between components of $x$.
\end{proof}

In view of \cref{lemma: incomplete are lower suboperads,lemma: pullback of lowersuboperads via lax are operads}, the following will then justify our claim that $\Realize[pow=\tau]{M}$ is a well-defined $G$-suboperad.

\begin{lemma}\label{lemma: projection onto graphs is lax}
    The map $\vp[graph]:\Realize[pow=G]{M}\to \IncompleteIndexOperad[pow=G]$ is a lax morphism of $G$-operads.
\end{lemma}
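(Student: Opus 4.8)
The plan is to verify the lax inequality by a direct edge-chase, after first reducing to the case $g=e$. Since $\vp[graph]$ is a morphism of $G$-symmetric sequences and the $\circ_i$-compositions in both $\Realize[pow=G]{M}$ and $\IncompleteIndexOperad[pow=G]$ are $G$-equivariant for the diagonal $G$-action (both are $G$-operads), one has $\vp[graph](gx\circ_i gy)=g\cdot\vp[graph](x\circ_i y)$ and $\vp[graph](gx)\circ_i\vp[graph](gy)=g\cdot\bigl(\vp[graph](x)\circ_i\vp[graph](y)\bigr)$; as $g$ acts on each $\IncompleteIndexOperad[pow=G](n)$ by a poset automorphism, it suffices to show $\vp[graph](x\circ_i y)\leq\vp[graph](x)\circ_i\vp[graph](y)$, i.e.\ that every edge of the left-hand graph is an edge of the right-hand one.

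So I would fix an edge $\edge{(h_1,j_1)}{(h_2,j_2)}$ of $\vp[graph](x\circ_i y)$ and unpack both definitions. First note $h_1\neq h_2$: by \cref{lemma:sigma disjoint and strict columns closed under composition} the element $x\circ_i y$ still lies in $\Realize[pow=G]{M}$, hence is $\Sigma$-disjoint, so two entries in a common row are disjoint and cannot give an edge. Writing out \cref{equation: compact partial composition definition}, the edge condition says
\[
x(h_1,\Combine[i](j_1))\,y(h_1,\Shift[oline,i](j_1))\;\Intersect\;x(h_2,\Combine[i](j_2))\,y(h_2,\Shift[oline,i](j_2)).
\]
Applying \cref{condition: right invariance} gives $x(h_1,\Combine[i](j_1))\Intersect x(h_2,\Combine[i](j_2))$, and since $h_1\neq h_2$ this is exactly the statement that $\edge{(h_1,\Combine[i](j_1))}{(h_2,\Combine[i](j_2))}$ is an edge of $\vp[graph](x)$ --- the first of the two conditions defining the composite $\vp[graph](x)\circ_i\vp[graph](y)$.

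For the second condition one may assume $\Combine[i](j_1)=\Combine[i](j_2)=i$, i.e.\ $j_1,j_2\in\set{i,\dots,i+m-1}$, so that $(x\circ_i y)(h_k,j_k)=x(h_k,i)\,y(h_k,\Shift[i](j_k))$ for $k=1,2$; the goal is to show $\edge{(h_1,\Shift[i](j_1))}{(h_2,\Shift[i](j_2))}$ is an edge of $\vp[graph](y)$, for which (again using $h_1\neq h_2$) it is enough to get $y(h_1,\Shift[i](j_1))\Intersect y(h_2,\Shift[i](j_2))$. This is the one step that is not purely formal, and it is where the defining hypotheses of $\Realize[pow=G]{M}$ must be used: because $x$ has \emph{strict columns}, the entries $x(h_1,i)$ and $x(h_2,i)$ are either equal or disjoint; if they were disjoint then the reformulation of \cref{condition: right invariance} would force $(x\circ_i y)(h_1,j_1)\Disjoint(x\circ_i y)(h_2,j_2)$, contradicting the edge, so $x(h_1,i)=x(h_2,i)=:c$, and then $c\,y(h_1,\Shift[i](j_1))\Intersect c\,y(h_2,\Shift[i](j_2))$ yields the desired intersection by \cref{condition: left partial invariance}. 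The main obstacle is precisely this last point --- passing from an intersection of two \emph{products} whose left factors need not coincide to an intersection of their right factors --- and strictness of columns together with left partial invariance is exactly what makes it go through; the remainder is bookkeeping with $\Combine[i]$ and $\Shift[i]$.
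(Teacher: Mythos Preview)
Your proof is correct and follows essentially the same route as the paper's: start from an edge of $\mathbf{p}(x\circ_i y)$, use right invariance to obtain the required edge in $\mathbf{p}(x)$, then in the case $\mathrm{c}_i(j_1)=\mathrm{c}_i(j_2)=i$ invoke strict columns to force $x(h_1,i)=x(h_2,i)$ and left partial invariance to extract the $y$-intersection. Your explicit reduction to $g=e$ and the remark that $h_1\neq h_2$ are minor additions the paper leaves implicit, but the argument is the same.
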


\begin{proof}
    Suppose we have $x\in \Realize[pow=G]{M}(n)$, $y\in \Realize[pow=G]{M}(m)$, and $i\in\vn[finset]$. Suppose $\vp[graph](x)\circ_i \vp[graph](y)$ has an edge $\edge{(h_1,j_1)}{(h_2,j_2)}$. Unpacking definitions, this is equivalent to the following three conditions holding: 
    \begin{enumerate}
        \item if $\vh[1]= \vh[2]$, then $\vi[1]\neq\vi[2]$,
        \item $x(h_1,\Combine[i,pow={n,m}]{j_1})\Intersect x(h_1,\Combine[i,pow={n,m}]{j_2})$, and
        \item if $\Combine[i,pow={n,m}]{j_1}=\Combine[i,pow={n,m}]{j_2}=i$, then $y(h_1,\Shift[i]{j_1})\Intersect y(h_2,\Shift[i]{j_2})$.
    \end{enumerate}
    In comparison, the graph $\vp[graph](x\circ_i y)$ has an edge $\edge{(h_1,j_1)}{(h_2,j_2)}$ if if $\vh[1]= \vh[2]$, then $\vi[1]\neq\vi[2]$, and (recalling \labelcref{equation: compact partial composition definition}) $$x(h_1,\Combine[i,pow={n,m}]{j_1})y(h_1,\Shift[oline,i]{j_1})\Intersect x(h_2,\Combine[i,pow={n,m}]{j_2})y(h_2,\Shift[oline,i]{j_2}).$$ 
    If this holds, by the axioms of an intersection monoid, we must have \[x(h_1,\Combine[i](j_1))\Intersect x(h_1,\Combine[i](j_1))\] for all $j_1,j_2$. 
    If $\Combine[i,pow={n,m}]{j_1}=\Combine[i,pow={n,m}]{j_2}=i$ then, since $x$ has strict columns, we must have that \[x(h_1,\Combine[i,pow={n,m}]{j_1})= x(h_1,\Combine[i,pow={n,m}]{j_2})\] and conclude \[y(h_1,\Shift[i]{j_1})\Intersect y(h_2,\Shift[i]{j_2}).\] 
    Hence, each edge of $\vp[graph](x\circ_i y)$ must also be contained in the graph $\vp[graph](x)\circ_i \vp[graph](y)$. i.e., $\vp[graph](x\circ_i y)\leq \vp[graph](x)\circ_i \vp[graph](y)$.
\end{proof}

\begin{theorem}
    Let $M$ be an intersection monoid, and $\tau\in\TransferSystemsOf{G}$. The $G$-symmetric sequence $\Realize[pow=\tau]{M}$ is a well-defined $G$-suboperad of $\Realize[pow=G]{M}$.
\end{theorem}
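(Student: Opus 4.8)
The plan is to read the statement off by assembling the lemmas already established in this section; no new computation is required. By \cref{lemma: incomplete realization is a pullback}, the $G$-symmetric sequence $\Realize[pow=\tau]{M}$ is the pullback in $\SymSeq(\Set^G)$ of the map $\vp[graph]\colon\Realize[pow=G]{M}\to\IncompleteIndexOperad[pow=G]$ along the inclusion $\iota\colon\IncompleteIndexOperad[pow=\tau]\hookrightarrow\IncompleteIndexOperad[pow=G]$. So it suffices to upgrade this pullback from a sub-$G$-symmetric sequence of $\Realize[pow=G]{M}$ to an honest $G$-suboperad.

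For that I would invoke \cref{lemma: pullback of lowersuboperads via lax are operads}, whose two hypotheses are precisely: (i) $\vp[graph]$ is a lax morphism of $G$-operads from a set-valued operad to a poset-valued operad, which is \cref{lemma: projection onto graphs is lax}; and (ii) $\IncompleteIndexOperad[pow=\tau]$ is a lower $G$-suboperad of $\IncompleteIndexOperad[pow=G]$, which is \cref{lemma: incomplete are lower suboperads}. Feeding these two inputs into the pullback lemma immediately gives that $\Realize[pow=\tau]{M}=\vp[pow=\ast,graph]\IncompleteIndexOperad[pow=\tau]$ is a $G$-suboperad of $\Realize[pow=G]{M}$, which is the assertion.

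There is no real obstacle remaining at this stage: all the content has been front-loaded into the section's lemmas. The only point one should not forget is that calling $\Realize[pow=\tau]{M}$ \emph{well-defined} also includes that it contains the operad unit and is $G$-stable, but both are immediate. The unit $u\in\Realize[pow=G]{M}(1)$ has $\vp[graph](u)=\CompleteGraph{G\times\underline{1}}$, which supports no non-identity transfer and hence lies in $\IncompleteIndexOperad[pow=\tau](1)$, so $u\in\Realize[pow=\tau]{M}(1)$; and $G$-stability is automatic since $\vp[graph]$ and $\iota$ are maps of $G$-symmetric sequences, so their pullback is a $G$-subobject. If I had to point to where the genuine work lives, it is the proof of \cref{lemma: projection onto graphs is lax}: there the strict-columns property of elements of $\Realize[pow=G]{M}$ is exactly what lets the intersection-monoid axioms \cref{condition: right invariance,condition: left partial invariance} push every edge of $\vp[graph](x\circ_i y)$ into $\vp[graph](x)\circ_i\vp[graph](y)$.
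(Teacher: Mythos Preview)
Your proposal is correct and follows exactly the paper's own proof, which simply cites \cref{lemma: incomplete are lower suboperads,lemma: pullback of lowersuboperads via lax are operads,lemma: incomplete realization is a pullback,lemma: projection onto graphs is lax} without further comment. Your additional remarks on the unit and $G$-stability, and on where the real content sits (the strict-columns hypothesis in \cref{lemma: projection onto graphs is lax}), are accurate elaborations but not extra ingredients.
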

\begin{proof}
    This follows from \cref{lemma: incomplete are lower suboperads,lemma: pullback of lowersuboperads via lax are operads,lemma: incomplete realization is a pullback,lemma: projection onto graphs is lax}.
\end{proof}

\begin{remark}\label{remark:disconnection because of strict}
    \Cref{lemma: projection onto graphs is lax} is the reason we need to assume the columns of $\Realize[pow=G]{M}$ are strict. Enforcing strict columns in our construction has the unfortunate side effect of causing $\Realize[pow=G]{P(1)}{n}$ for many common operads $P$ such as the little $k$-cubes $\LittleCubeOperad[k]$ to be disconnected. 
    In particular, this condition stops $\Realize[pow=G]{\LittleCubeOperad[\infty](1)}{n}$ from being a model for $\mathbb{N}_\infty$-operads. 
    One can try to remove the requirement of strict columns in $\Realize[pow=\tau]{M}$; however, this will no longer be an operad in general. 
    We have included an explicit counterexample in \cref{appendix:strict columns} for the interested reader.
\end{remark}

Before moving onto showing that $\Realize[pow=\tau]{M}$ realizes the transfer system $\tau$, let us construct a map that will produce fixed points in the spaces $\Realize[pow=\tau]{M}{n}$. For a finite set $A$, we will write $$\Map^{\Disjoint}(A,M):= \set[\bigg]{f\in \Map(A,M)\given f(a)\Disjoint f(\va['])\text{ for all }\va\neq \va[']}.$$ 
This is always non-empty when $M$ is non-trivial by \cref{lemma: nontrivial have infinite disjoint elements}. Let $\Gamma \subseteq G\times \vn[finset]$ be a graph subgroup and consider the following composite: 
\begin{align*}
    \Psi_\Gamma:\Map^{\Disjoint}(\RightCosets{\Gamma}{G\times \vn[finset]},M) \hookrightarrow \Map(\RightCosets{\Gamma}{G\times \vn[finset]},M) \xrightarrow{q^\ast} \Map(G\times \vn[finset],M)= \OConstr[pow=G]{M}{n}
\end{align*} where $q$ is the quotient map. Given $x\in \Map^{\Disjoint}(\RightCosets{\Gamma}{G\times \vn[finset]},M)$, we have that $\Psi_{\Gamma}(x)(g,i)= x(\Gamma\cdot(g,i))$, and it follows that $\Psi_{\Gamma}(x)$ is fixed by $\Gamma$, is $\Sigma$-free and has strict columns. Hence, we have 
\begin{align*}
    \emptyset\neq\Image{\Psi_{\Gamma}} \subseteq \Realize[pow=G]{M}{n}^{\Gamma}.
\end{align*}

In fact, we have something stronger.

\begin{lemma}\label{lemma: constructed elements support only expected transfers}
    Let $M$ be a non-trivial intersection monoid and $\tau$ a transfer system of $G$. If $\Gamma \subseteq G\times \vn[finset]$ is a graph subgroup such that $\SetFromGraph[\Gamma]\in \GeneratedCoefficientSystemOf{\tau}{G/\vH[\Gamma]}$, then \[\emptyset\neq\Image{\Psi_{\Gamma}} \subseteq \Realize[pow=\tau]{M}{n}^{\Gamma}.\]
\end{lemma}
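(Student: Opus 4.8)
The plan is to bootstrap from the containment $\emptyset\neq\Image{\Psi_{\Gamma}}\subseteq \Realize[pow=G]{M}{n}^{\Gamma}$ established in the preceding discussion. Since $\Realize[pow=\tau]{M}{n}^{\Gamma}=\Realize[pow=\tau]{M}{n}\cap\Realize[pow=G]{M}{n}^{\Gamma}$, and the image already lies in $\Realize[pow=G]{M}{n}^{\Gamma}$ (and is nonempty, by \cref{lemma: nontrivial have infinite disjoint elements} applied to the finite set $\RightCosets{\Gamma}{G\times\vn[finset]}$), it suffices to show that every $\Psi_{\Gamma}(x)$ actually lies in $\Realize[pow=\tau]{M}{n}$; that is, any transfer that $\Psi_{\Gamma}(x)$ supports is already a $\tau$-transfer.

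First I would unwind the support condition. Suppose $\Psi_{\Gamma}(x)$ supports a transfer $K\to H$ at position $i$, witnessed by a twist map $\alpha\colon S\to\vn[finset]$ and $g_0\in G$ with $Sg_0=H$, $i\in\Image{\alpha}$, and $K\subseteq\alpha^{-1}(i)g_0$ a maximal inclusion among subgroups, together with the requirement that $\Psi_{\Gamma}(x)(g_1,\alpha(g_1))\Intersect\Psi_{\Gamma}(x)(g_2,\alpha(g_2))$ for all $g_1,g_2\in S$. Since $\Psi_{\Gamma}(x)(g,j)=x(\Gamma\cdot(g,j))$ and $x\in\Map^{\Disjoint}(\RightCosets{\Gamma}{G\times\vn[finset]},M)$ takes pairwise disjoint values on distinct orbits, this intersection requirement forces all the points $(g,\alpha(g))$ with $g\in S$ to lie in a single $\Gamma$-orbit of $G\times\vn[finset]$.

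The core of the argument is then to read off the group theory from this single-orbit condition. Fix $g_\ast\in\alpha^{-1}(i)$. For every $g\in S$ the orbit condition provides $h\in\vH[\Gamma]$ with $hg_\ast=g$ and $\phi_\Gamma(h)\cdot i=\alpha(g)$; hence $gg_\ast^{-1}\in\vH[\Gamma]$ and $\alpha(g)=\phi_\Gamma(gg_\ast^{-1})\cdot i$ for all $g\in S$. From this I would extract two facts. First, $Sg_\ast^{-1}\subseteq\vH[\Gamma]$, and since $g_\ast\in S$ and $Sg_0=H$ one computes $Sg_\ast^{-1}=H$, so $H\leq\vH[\Gamma]$. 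Second, writing $J:=\operatorname{Stab}_{\vH[\Gamma]}(i)$ for the stabilizer of $i$ in the $\vH[\Gamma]$-set $\SetFromGraph[\Gamma]$, we get $\alpha^{-1}(i)=\set{g\in S\given gg_\ast^{-1}\in J}=(H\cap J)g_\ast$, and therefore $\alpha^{-1}(i)g_0=(H\cap J)(g_\ast g_0)$ is a right coset of the subgroup $H\cap J\leq H$. Because this coset contains the subgroup $K$, it contains the identity, so $g_\ast g_0\in H\cap J$ and in fact $\alpha^{-1}(i)g_0=H\cap J$; maximality then forces $K=H\cap J=H\cap\operatorname{Stab}_{\vH[\Gamma]}(i)$.

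Finally I would feed in the hypothesis $\SetFromGraph[\Gamma]\in\GeneratedCoefficientSystemOf{\tau}{G/\vH[\Gamma]}$: this means $\SetFromGraph[\Gamma]\cong\amalg_j\vH[\Gamma]/K_j$ with every $K_j\Transfer{\tau}\vH[\Gamma]$, so the stabilizer $J$, being conjugate in $\vH[\Gamma]$ to one of the $K_j$, satisfies $J\Transfer{\tau}\vH[\Gamma]$ by conjugation-invariance of $\tau$. Applying the restriction axiom of a transfer system to $J\Transfer{\tau}\vH[\Gamma]$ and $H\leq\vH[\Gamma]$ yields $H\cap J\Transfer{\tau}H$, i.e.\ $K\Transfer{\tau}H$, which is exactly what we needed. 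I expect the main obstacle to be the third step: carefully converting the ``single $\Gamma$-orbit'' statement into the clean identity $K=H\cap\operatorname{Stab}_{\vH[\Gamma]}(i)$ --- in particular the coset bookkeeping and the observation that a subgroup contained in a right coset of $J$ forces that coset to equal $J$ --- after which the transfer-system axioms close the argument routinely.
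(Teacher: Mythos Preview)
Your argument is correct and follows essentially the same route as the paper's proof. Both reduce to the observation that, since $\Psi_\Gamma(x)$ takes pairwise disjoint values on distinct $\Gamma$-orbits, the graph of any twist map $\alpha$ exhibiting a transfer must lie inside a single orbit $\Gamma\cdot(g_\ast,i)$; from there both extract $H\leq H_\Gamma$ and $K=H\cap\operatorname{Stab}_{H_\Gamma}(i)$, and close with the conjugation and restriction axioms of $\tau$. The only cosmetic difference is that the paper phrases the single-orbit step via the graph map $\mathbf{p}$ and an auxiliary function $\beta\colon H_\Gamma g_0\to\underline{n}$ with $\alpha=\beta|_S$, whereas you work directly with the intersection relation and use the ``subgroup contained in a coset forces the coset to be the subgroup'' observation to identify $\alpha^{-1}(i)g_0$ with $H\cap J$; these are equivalent computations.
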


\begin{proof}
    We write $\Psi$ for $\Psi_\Gamma$, and suppose we have a decomposition \[\SetFromGraph[\Gamma]\cong \coprod_{k=1}^r \vH[\Gamma]/\vK_k.\] 
    We will show that if $\vp[graph]\Psi(x)$ supports a transfer $\vK[']\to \vH[']$, we have $\vH[']\leq \vH[\Gamma]$ and $\vK[']=\vH[']\cap \vK[pow=h,k]$ for some $h\in \vH[\Gamma]$ and $k\in\underline{r}$. 
    This is sufficient to prove the lemma as $\SetFromGraph[\Gamma]\in \GeneratedCoefficientSystemOf{\tau}{G/\vH[\Gamma]}$ and the decomposition of $\SetFromGraph[\Gamma]$ implies that $\vK[k]\Transfer{\tau} \vH[\Gamma]$ for each $k$. Since transfer systems are closed under restrictions and conjugations, we would then get $\vK[']\Transfer{\tau}\vH[']$.

    By construction, we have that $\Psi(x)(g_1,j_1)\Intersect \Psi(x)(g_2,j_2)$ if and only if $(g_2,j_2)\in \Gamma\cdot(g_1,j_1)$. It follows that $\vp[graph]\Psi(x)$ decomposes as \[\vp[graph]\Psi(x) = \coprod_{\Gamma\cdot (g,i)\in \RightCosets{\Gamma}{G\times \vn[finset]}} \CompleteGraph{\Gamma\cdot (g,i)}.\]
    Now, suppose that $\alpha:S\to \vn[finset]$ is a function that exhibits a transfer $\vK[']\to \vH[']$ in $\vp[graph]\vPsi{x}$. In particular, suppose $\alpha$ makes a $S/T$-complete subgraph in position $i_0$, and $g_0\in G$ is the element such that \[\vK[']\subseteq T\vg[0,inv] \subseteq S\vg[0,inv] =\vH[']\] where the first inclusion is maximal. 
    From the above graph decomposition, we must have \[\CompleteGraph{\Gamma(\alpha)} \subseteq \CompleteGraph{\Gamma\cdot (\vg[0],\vi[0])}.\]
    Which on vertices implies that \begin{align}\label{equation: twists of a fixed point inclusion}
        \set[\bigg]{(h,\valpha{h})\given h\in S} \subseteq \Gamma\cdot(\vg[0],\vi[0]).
    \end{align}
    The projection of $\Gamma\cdot(\vg[0],\vi[0])$ onto $G$ is $\vH[\Gamma]\vg[0]$, and so \labelcref{equation: twists of a fixed point inclusion} implies we have $S \subseteq \vH[\Gamma] \vg[0]$ which gives $S\vg[0,inv]=\vH['] \leq \vH[\Gamma]$. From the decomposition of $\SetFromGraph[\Gamma]$, we have for some $h\in \vH[\Gamma]$ and $k\in\underline{r}$, that $K_k^h=\Isotropy[\vH[\Gamma]]{\vi[0]}$ (using the action of $\vH[\Gamma]$ on $\vn[finset]$ via the action map $\phi_\Gamma$).  
    
    Consider the function $\beta:\vH[\Gamma]\vg[0]\to \vn[finset]$ given by $\beta(h)=\phi_{\Gamma}(h\vg[0,inv])(\vi[0])$. This function is exactly such that $\vbeta[inv]{\vi[0]}=K_k^{h}\vg[0]$ and we have \begin{align*}
        \Gamma\cdot(\vg[0],\vi[0])=\set[\bigg]{(h,\vbeta{h})\given h\in \vH[\Gamma]\vg[0]}.
    \end{align*}
    Comparing this with \cref{equation: twists of a fixed point inclusion}, we deduce that $\valpha = \restr{\vbeta}{S}$ and conclude that \begin{align*}
        \valpha[inv]{i}&= \vbeta[inv]{i}\cap S \\ 
        T&= \vK[pow=h,k]\vg[0]\cap S \\
        T\vg[0,inv]&= \vK[pow=h,k]\cap S\vg[0,inv] \\
        T\vg[0,inv]&= \vK[pow=h,k]\cap \vH['].
    \end{align*}
    Since $\alpha$ exhibits the transfer $\vK[']\to\vH[']$ via the $S/T$-complete subgraph, we have $\vK[']\subseteq T\vg[0,inv]$ is a maximal subgroup by definition and so conclude that $\vK[']=\vK[pow=h,k]\cap \vH[']$.
    Hence, we are done.
\end{proof}

We now have everything we need to prove the first main theorem of this paper is the following. Note that this implies the result stated in \cref{theorem A}.

\begin{theorem}\label{theorem: main realization}
    Let $\tau\in\TransferSystemsOf{G}$ and $M$ a non-trivial intersection monoid. The $G$-operad $\Realize[pow=\tau]{M}$ realizes the transfer system $\tau$.
\end{theorem}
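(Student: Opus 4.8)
The plan is to verify the three requirements of \cref{definition: realize transfer system} in turn: that $\Realize[pow=\tau]{M}$ is $\Sigma$-free, that $H\times\{\id\}\in\FixedSubgroupsOf[2]{\Realize[pow=\tau]{M}}$ for every $H\leq G$, and that $\TransfersOf{\Realize[pow=\tau]{M}}=\tau$. The first requirement is immediate: every element of $\Realize[pow=G]{M}$, and hence of the suboperad $\Realize[pow=\tau]{M}$, is $\Sigma$-disjoint, so if a non-identity $\sigma\in\Sigma_n$ fixed some $x\in\Realize[pow=\tau]{M}(n)$ we would obtain $x(g,i)=x(g,\vsigma[inv]i)$ for some $g\in G$ and some $i\neq\vsigma[inv]i$, contradicting $\Sigma$-disjointness together with reflexivity of $\Intersect$.

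For the second requirement I would apply \cref{lemma: constructed elements support only expected transfers} to the graph subgroup $\Gamma=H\times\{\id\}\subseteq G\times\Sigma_2$. Its associated $H$-set $\SetFromGraph[\Gamma]$ is the trivial $H$-set on $\vn[finset]=\underline{2}$, i.e.\ $H/H\amalg H/H$, and since $H\Transfer{\tau}H$ always holds we have $\SetFromGraph[\Gamma]\in\GeneratedCoefficientSystemOf{\tau}{G/H}$; the lemma then gives $\emptyset\neq\Image{\Psi_{\Gamma}}\subseteq\Realize[pow=\tau]{M}(2)^{\Gamma}$. (This also makes $\TransfersOf{\Realize[pow=\tau]{M}}$ well-defined, since the earlier sufficiency criterion then shows $\AdmissibleSetsOf{\Realize[pow=\tau]{M}}$ is an indexing system.)

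For the equality $\TransfersOf{\Realize[pow=\tau]{M}}=\tau$ I would prove the two inclusions separately. The inclusion $\tau\subseteq\TransfersOf{\Realize[pow=\tau]{M}}$ is again a consequence of \cref{lemma: constructed elements support only expected transfers}: given $K\Transfer{\tau}H$, set $n=[H:K]$, fix an ordering $\vn[finset]\cong H/K$, and let $\Gamma\subseteq G\times\Sigma_n$ be the resulting graph subgroup, so $\vH[\Gamma]=H$ and $\SetFromGraph[\Gamma]\cong H/K\in\GeneratedCoefficientSystemOf{\tau}{G/H}$ directly from the definition of $\GeneratedCoefficientSystemOf$. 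The lemma produces a point of $\Realize[pow=\tau]{M}(n)^{\Gamma}$, so $H/K$ is admissible for $\Realize[pow=\tau]{M}$ and $K\leq H$ lies in $\TransfersOf{\Realize[pow=\tau]{M}}$.

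The reverse inclusion $\TransfersOf{\Realize[pow=\tau]{M}}\subseteq\tau$ is the heart of the matter. Suppose $K\leq H$ with $H/K$ admissible, witnessed by a fixed point $x\in\Realize[pow=\tau]{M}(n)^{\Gamma}$, where $\Gamma\subseteq G\times\Sigma_n$ is the graph subgroup of the $H$-set $\vn[finset]\cong H/K$, with action homomorphism $\phi\colon H\to\Sigma_n$, and $\vi[0]\in\vn[finset]$ is the index of the trivial coset, so that $\Isotropy[H]{\vi[0]}=K$. The key construction is the twist map $\alpha\colon H\to\vn[finset]$ given by $\valpha{h}=\phi(h)(\vi[0])$: its domain $H$ is a subgroup (take $\vg[0]=e$), $\vi[0]=\valpha{e}\in\Image{\alpha}$, and $\valpha[inv]{\vi[0]}=\Isotropy[H]{\vi[0]}=K$, so that $K\subseteq\valpha[inv]{\vi[0]}\vg[0]=K$ is trivially a maximal inclusion and $\alpha$ structures the transfer $K\to H$ at position $\vi[0]$. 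Next, $\Gamma$-invariance of $x$ means $x$ is constant on $\Gamma$-orbits, and the orbit $\Gamma\cdot(e,\vi[0])$ equals $\GraphOf{\alpha}=\set{(h,\phi(h)(\vi[0]))\given h\in H}$; hence $x(h_1,\valpha{h_1})=x(e,\vi[0])=x(h_2,\valpha{h_2})$ for all $h_1,h_2\in H$, and these components are pairwise intersecting by reflexivity of $\Intersect$. Therefore $x$ supports the transfer $K\to H$ at position $\vi[0]$, and since $x\in\Realize[pow=\tau]{M}(n)$ the defining condition of this sequence forces $K\Transfer{\tau}H$. I expect the main obstacle to be exactly this last step --- identifying the tautological twist map $\alpha$, checking that it structures $K\to H$ \emph{maximally} at $\vi[0]$, and confirming that $\Gamma$-fixedness collapses all components indexed by $\GraphOf{\alpha}$ to the single element $x(e,\vi[0])$ --- after which the conclusion is formal from the definitions and the closure properties of $\tau$.
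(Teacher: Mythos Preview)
Your proposal is correct and follows essentially the same approach as the paper. The only cosmetic differences are that the paper phrases the reverse inclusion as a contrapositive (if $K\to H\notin\tau$ then the $\Gamma(H/K)$-fixed points are empty) and routes the ``$x$ supports $K\to H$'' step through the graph $\mathbf{p}(x)$ rather than directly through the twist map $\alpha(h)=\phi(h)(i_0)$; your version spells out the twist-map verification more explicitly, but the underlying argument is identical.
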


\begin{proof}
    Firstly, it is straight forward to see that $\Realize[pow=\tau]{M}$ is $\Sigma$-free, and \cref{lemma: constructed elements support only expected transfers} shows that $\FixedSubgroupsOf[2]{\Realize[pow=\tau]{M}}$ contains the graph subgroups $H\times \set{\id}$, and $\tau \subseteq \TransfersOf{\Realize[pow=\tau]{M}}$. 

    To show that $\TransfersOf{\Realize[pow=\tau]{M}} \subseteq \tau$, it is enough to show that if $K\to H\notin \tau$, then $$\Realize[pow=\tau]{M}{n}^{\Gamma(H/K)}=\emptyset$$ where $n= \abs{H/K}$.
    Suppose $x\in \Realize[pow=\tau]{M}(n)^{\vGamma{H/K}}$.
    We must have $x$ is constant on any arbitrary orbit $\vGamma{H/K}\cdot (g,i)$. 
    This would mean that $\vp[graph](x)$ contains subgraphs $\CompleteGraph{\Gamma_{H/K}\cdot (g,i)}$ which would imply $\vp[graph](x)$ supports a $K\to H$ transfer. However, if $K\to H\notin \tau$, we can't have this by definition of $\Realize[pow=\tau]{M}$. Hence, by contradiction, we conclude $\TransfersOf{\Realize[pow=\tau]{M}} \subseteq \tau$ and are done.
\end{proof}

\section{A model for \texorpdfstring{$\mathbb{N}_\infty$-operads}{â„•âˆž-operads}.}\label{section: realizing ninf}
\subsection{Dyadic Intervals and Permutations}
In \cref{example: basic examples of intersection monoids} we defined the dyadic interval monoid $\DyadicIntervalMonoid$. As a reminder, this is the free monoid generated by two letters $\DyadicIntervalMonoid=F(\set{a,b})$, where for a general word \[w=\ell_1\ell_2\dots\ell_n\] with $\ell_k\in \set{a,b}$, we write the length by $\Length{w}=n$ and the $k$-th letter by $w^k:=\ell_k$. We put an intersection relation on $\DyadicIntervalMonoid$ where given two words $w_1,w_2\in \DyadicIntervalMonoid$, we set $w_1\Disjoint w_2$ if and only if there exists a $k\leq \min(\Length{\vw[1]},\Length{\vw[2]})$ such that $w_1^k\neq w_2^k$. 

The reason for the name of $\DyadicIntervalMonoid$ is that we can construct an intersection monoid map \[\alpha:\DyadicIntervalMonoid\to \LittleCubeOperad[1]{1}\]
where $\alpha(a)(z)=\frac{1}{2}z$ and $\alpha(b)(z)=\frac{1}{2}z+\frac{1}{2}$. This map is injective, and we can then identify $\DyadicIntervalMonoid$ with the intersection submonoid of $\LittleCubeOperad[1]{1}$ corresponding to embeddings with images the dyadic intervals $[k/2^n,k+1/2^n]$ for some $k,n$.

The corresponding intersection operad $\Realize[pow=\set{1}]{\DyadicIntervalMonoid}$ is related to the associative operad $\Assoc$. Write $\LinearOrderOperad{n}$ for the set of linear orders on $\vn[finset]$. This forms an operad isomorphic to the associative operad $\Assoc$ via the usual equivalence between linear orders on $\vn[finset]$ and permutations on $\vn[finset]$. For details on this equivalence, and verification that $\LinearOrderOperad$ forms an operad, see \cite{beuckelmannSmallCatalogueEnoperads2023}.

We can define an operad map \[\pi:\Realize[pow=\set{1}]{\DyadicIntervalMonoid}\to \LinearOrderOperad\] as follows. Given an element $w\in \Realize[pow=\set{1}]{\DyadicIntervalMonoid}{n}$, for each $i\neq j\in \vn[finset]$, since $w(i)\Disjoint w(j)$, there exists a minimal $k$ such that $w(i)_k\neq w(j)_k$ and so we either have $w(i)_k=a$ and $w(j)_k=b$, or $w(i)_k=b$ and $w(j)_k=a$. We can then define a linear order $<_w$ on $\vn[finset]$ where $i <_w j$ if $w(i)_k=a$ and $w(j)_k=b$. The map $\pi$ is then given by $\pi(w):=<_w$.

Because of this map $\pi$, and that the underlying monoid $\DyadicIntervalMonoid$ is free, we can view $\Realize[pow=\set{1}]{\DyadicIntervalMonoid}$ as a more ``free version'' of the associative operad. We find this operad interesting enough to give it a name, and will call it the \emph{Dyadic Associative Operad} $\DyadicAssoc$. 
\subsection{The Dyadic Barratt-Eccles Operad}

The non-equivariant Barrat-Eccles operad $\BarrattEccles$ is constructed from $\Assoc$ by first applying ``chaotic surgery'' $\widetilde{(-)}$ to each of its level spaces to get $\Assoc[~]$. Chaotic surgery is the process of constructing a simplicial set $\widetilde{X}$ from a set $X$ where the elements of the set form the vertices, and all possible edges and higher faces have also been included. Geometric realization is then applied to obtain $\BarrattEccles=\abs{\Assoc[~]}$. 

In comparison, instead of joining elements of the operad $\Assoc$ to enforce contractability, we will join elements of the monoid $\DyadicIntervalMonoid$ to enforce contractability. We will call the contracted version of $\DyadicIntervalMonoid$ the \emph{fat dyadic intervals monoid} and denote it by $\FatDyadicIntervalMonoid$. As we will show, the realization $\Realize[pow=\set{1}]{\FatDyadicIntervalMonoid}$ is an $\mathbb{E}_\infty$-operad which we will call the Dyadic Barratt-Eccles operad $\DyadicBarrattEccles$. 

Let us now construct $\FatDyadicIntervalMonoid$. Write $\CategoryOfFiniteSetInjective$ for the category with objects $\vn[finset]$ (including the empty set $\underline{0}=\emptyset$), and morphisms linear injective maps. Given $J \subseteq \vn[finset]$, we will write \[\delta_J:\underline{n-\abs{J}}\to \vn[finset]\] for the linear map that skips the values of $J$. There is a covariant functor \begin{align*}
    \vI[uline]: \CategoryOfFiniteSetInjective \to \Spaces 
\end{align*}
which on objects is $\vI[uline]{\vn[finset]}=I^n$ (and $\vI[uline]{\underline{0}}=\ast$), and given a morphism $\delta_J:\vm[finset]\to \vn[finset]$ where $m=n-\abs{J}$, the morphism $\vI[uline]{\delta_J}$ is the one determined by \[\left(\vI[uline]{\delta_J}(t_1,t_2,\dots, t_m)\right)_k= \begin{cases*}
    t_i& if $\delta_J(i)=k$ \\ 0 & otherwise.
\end{cases*}\]
In the case of $\delta_n: \underline{0}\to \vn[finset]$, the morphism $\vI[uline]{\delta_n}$ is the map $\ast\to I^n$ that picks out $(0,0,\dots,0)$. 

Let $\DyadicIntervalMonoid_n \subseteq \DyadicIntervalMonoid$ be the subset of words of length $n$. We also have a contravariant functor $\DyadicIntervalMonoid[uline]: \CategoryOfFiniteSetInjective^{op}\to \Spaces$ given by $\DyadicIntervalMonoid[uline]{\vn[finset]}=\DyadicIntervalMonoid_n$, topologized with the discrete topology, and on morphisms $\DyadicIntervalMonoid[uline]{\delta_J}:\DyadicIntervalMonoid[n]\to \DyadicIntervalMonoid[m]$ given by deleting the letters in each index determined by $J$. 
We will identify $\DyadicIntervalMonoid[n]$ with $\set{a,b}^n=\Map(\vn[finset],\set{a,b})$ and write words of length $n$ as vectors \[\vell[vec] = (\vell[1],\vell[2],\dots,\vell[n])\] where $\vell\in \set{a,b}$. 
The empty word will still be written as $e$, and if needed, we will write $w(\vell[vec])=\vell[1]\vell[2]\dots\vell[n]$ for the corresponding word in $\DyadicIntervalMonoid$. 
The morphism $\DyadicIntervalMonoid[uline](\vdelta[J])$ is then given by \[\DyadicIntervalMonoid[uline](\vdelta[J])(\vell[1],\vell[2],\dots \vell[n]) = (\vell[\vdelta{1}],\vell[\vdelta{2}],\dots, \vell[\vdelta{m}]).\]

\begin{definition}
    The \emph{fat dyadic intervals monoid} $\FatDyadicIntervalMonoid$ is the coend \[\FatDyadicIntervalMonoid:= \int^{\vn[finset]\in\CategoryOfFiniteSetInjective}\DyadicIntervalMonoid[uline]{\vn[finset]}\times \vI[uline]{\vn[finset]}.\]
\end{definition}

There is a natural continuous monoid structure on $\FatDyadicIntervalMonoid$ given by vector concatenation. In terms of representatives, this monoid structure is given by 
\[[(\vell[vec];\vt[vec])][(\vell[vec,'];\vt[vec,'])]=[(\vell[vec],\vell[vec,'];\vt[vec],\vt[vec,'])]\] where $\vell[vec],\vell[vec,']$ and $\vt[vec],\vt[vec,']$ is vector concatenation. The unit is the class $[(e,\ast)]$.

\begin{lemma}\label{lemma:fat dyadic is contractible}
    The monoid $\FatDyadicIntervalMonoid$ is contractible.
\end{lemma}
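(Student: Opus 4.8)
The plan is to exhibit an explicit contracting homotopy, built out of the coend description, that sends everything to the unit $[(e,\ast)]$. The underlying point is that the fat dyadic interval monoid is a homotopy colimit of the discrete sets $\DyadicIntervalMonoid_n$ over the category $\CategoryOfFiniteSetInjective$, and $\CategoryOfFiniteSetInjective$ has an initial object $\underline{0}$, so the colimit should be contractible. Rather than invoking general homotopy-colimit machinery, I would write the homotopy down by hand using the cube coordinates.

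First I would unwind the coend: a point of $\FatDyadicIntervalMonoid$ is represented by a pair $[(\vell[vec];\vt[vec])]$ with $\vell[vec]\in\set{a,b}^n$ and $\vt[vec]\in I^n$, subject to the coend relations coming from the morphisms $\vdelta[J]$; concretely, coordinates where $t_i=0$ can be deleted along with the corresponding letter $\ell_i$, since such coordinates are in the image of $\vI[uline]{\vdelta[J]}$ and the relation identifies $(\vell[vec];\vt[vec])$ with its image after deleting those slots. So every class has a representative with all $t_i>0$, and this representative is unique up to the remaining $\Sigma$-freeness — but I only need existence. Then I would define $H\colon \FatDyadicIntervalMonoid\times I\to\FatDyadicIntervalMonoid$ by scaling all cube coordinates toward $0$: on a representative with all $t_i>0$, set
\[
H\bigl([(\vell[vec];t_1,\dots,t_n)],s\bigr) = [(\vell[vec];(1-s)t_1,\dots,(1-s)t_n)].
\]
At $s=0$ this is the identity; at $s=1$ every coordinate becomes $0$, so the coend relation collapses the whole tuple and we land on $[(e;\ast)]$, the unit. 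I would check this is well-defined on classes (it commutes with the deletion relations, since scaling and deleting zero-coordinates commute) and continuous (it is continuous on each $\DyadicIntervalMonoid_n\times I^n\times I$ before passing to the quotient, and the quotient map is a quotient map, so continuity descends). That gives a based homotopy from $\id$ to the constant map at the unit, hence $\FatDyadicIntervalMonoid\simeq\ast$.

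The main obstacle is bookkeeping around the coend relations: I have to be careful that $H$ is genuinely well-defined, i.e. that scaling a representative and then quotienting gives the same answer regardless of which representative (with possibly different numbers of zero/nonzero coordinates) I started from, and that the endpoint $s=1$ really is the unit class and not just "some tuple of zeros" — but the latter is exactly the content of the coend identification via $\vdelta[J]$ with $J=\vn[finset]$, so it works. A minor point to address is that the discreteness of $\DyadicIntervalMonoid[uline]{\vn[finset]}$ means the homotopy never changes the letter-string $\vell[vec]$ while any coordinate is nonzero; that is fine, since the contraction happens entirely in the cube direction and the collapse of $\vell[vec]$ to $e$ occurs only in the limit $s\to1$ where the relation forces it. I would also remark that the monoid multiplication (vector concatenation) is not needed for the statement — contractibility is purely a statement about the underlying space — though it is what makes $\Realize[pow=\set{1}]{\FatDyadicIntervalMonoid}$ into an $\mathbb{E}_\infty$-operad downstream.
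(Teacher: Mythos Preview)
Your proposal is correct and is essentially the same argument as the paper's: both construct the contracting homotopy $H([(\vell[vec];\vt[vec])],s)=[(\vell[vec];(1-s)\vt[vec])]$ by scaling the cube coordinates, verify it descends to the coend via compatibility with the maps $\vI[uline]{\delta_J}$, and observe that at $s=1$ the class collapses to the unit $[(e,\ast)]$. Your aside about reduced representatives being ``unique up to the remaining $\Sigma$-freeness'' is a small slip---there is no symmetric group action here and the reduced representative is genuinely unique---but as you note, only existence is needed, so this does not affect the argument.
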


\begin{proof}
    For $\vt[vec]\in \vI[uline](\vn[finset])$, and $\lambda\in I$, we write \[\lambda \vt[vec] :=(\lambda \vt[1],\lambda \vt[2],\dots, \lambda \vt[n]).\] The obvious homotopies $\vH[n]:\vI[uline]{\vn[finset]}\times I\to \vI[uline]{\vn[finset]}$ given by $\vH[n](\vt[vec],\lambda)=(1-\lambda)\vt[vec]$ commutes with $\vI[uline]{\delta_J}$. 
    \begin{center}
    \begin{tikzpicture}[commutative diagrams/every diagram]
            \matrix[matrix of math nodes, name=m, commutative diagrams/every cell] {
            \vI[uline]{\vn[finset]}\times I & \vI[uline]{\vn[finset]}\\
            \vI[uline]{\vm[finset]}\times I & \vI[uline]{\vm[finset]}\\};

            \path[commutative diagrams/.cd, every arrow, every label]
            (m-2-1) edge node {$\vI[uline]{\delta_J}\times \id$} (m-1-1)
            (m-1-1) edge node {$\vH[n]$} (m-1-2)
            (m-2-2) edge node[swap] {$\vI[uline]{\delta_J}$} (m-1-2)
            (m-2-1) edge node {$\vH[m]$} (m-2-2);
        \end{tikzpicture}
    \end{center}
    As a consequence, this induces a homotopy \[H: \FatDyadicIntervalMonoid\times I\to \FatDyadicIntervalMonoid\] which on classes is given by \[H([(\vell[vec],\vt[vec])],\lambda)= [(\vell[vec],(1-\lambda)\vt[vec])].\] In particular, we find $H(-,0)=\id_{\FatDyadicIntervalMonoid}$ and $H([(\vell[vec],\vt[vec])],1)= [(\vell[vec],0\vt[vec])]= [(e,\ast)].$ Thus, we see $\FatDyadicIntervalMonoid$ is contractible.
\end{proof}

We will say that a representative $(\vell[vec];\vt[vec])$ is \emph{reduced} if $\vt[vec]=(\vt[1],\vt[2],\dots,\vt[n])$ is such that $\vt[k]\neq 0$ for all $k$. Every class $[(\vell[vec],\vt[vec])]$ has a unique reduced representative. We can use this to get a (discontinuous) monoid map \[\omega:\FatDyadicIntervalMonoid\to \DyadicIntervalMonoid\] where given $[(\vell[vec];\vt[vec])]\in \FatDyadicIntervalMonoid$, if $(\vell[vec];\vt[vec])=((\vell[1],\vell[2],\dots,\vell[n]);\vt[vec])$ is a reduced representative, then we set $\omega([(\vell[vec],\vt[vec])])=w(\vell[vec]) =\vell[1]\vell[2]\dots\vell[n]$. We can use the map $\omega$ to put an intersection relation on $\FatDyadicIntervalMonoid$. 

\begin{lemma}
    The monoid $\FatDyadicIntervalMonoid$ has an intersection monoid structure where given elements $x,y\in\FatDyadicIntervalMonoid$, we set $x\Intersect y$ if and only if $\omega(x)\Intersect \omega(y)$.
\end{lemma}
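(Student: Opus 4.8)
The plan is to recognize this as the pullback of an intersection‑monoid structure along a monoid homomorphism. Specifically, I would first isolate the following general fact: if $(N,\Intersect_N)$ is an intersection monoid and $f\colon M\to N$ is a homomorphism of monoids in $\vC[scr]$, then the relation on $M$ defined by $x\Intersect_M y$ if and only if $f(x)\Intersect_N f(y)$ makes $(M,\Intersect_M)$ an intersection monoid. The lemma is then the special case $N=\DyadicIntervalMonoid$ with the intersection structure of \cref{example: basic examples of intersection monoids}, $M=\FatDyadicIntervalMonoid$, and $f=\omega$.

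Verifying the general fact is routine. Reflexivity and symmetry of $\Intersect_M$ are inherited directly from those of $\Intersect_N$. For \cref{condition: right invariance}, if $\vx[1]\vy[1]\Intersect_M\vx[2]\vy[2]$ then, since $f$ is a homomorphism, $f(\vx[1])f(\vy[1])=f(\vx[1]\vy[1])\Intersect_N f(\vx[2]\vy[2])=f(\vx[2])f(\vy[2])$, so \cref{condition: right invariance} in $N$ gives $f(\vx[1])\Intersect_N f(\vx[2])$, that is, $\vx[1]\Intersect_M\vx[2]$; \cref{condition: left partial invariance} is proved the same way, and no continuity hypothesis is needed since the intersection relations carry no topology. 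It then remains to check the two inputs. That $\DyadicIntervalMonoid$ is an intersection monoid is the short direct computation behind \cref{example: basic examples of intersection monoids}: a witness $k$ for $\vw[1]\Disjoint\vw[2]$ is still a witness for $\vw[1]\vv[1]\Disjoint\vw[2]\vv[2]$, because prepending letters does not alter the first $\Length{\vw[1]}$ and $\Length{\vw[2]}$ letters, while a witness $k$ for $\vv[1]\Disjoint\vv[2]$ gives the witness $\Length{\vw}+k$ for $\vw\vv[1]\Disjoint\vw\vv[2]$.

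The one step that is not purely formal — and the only real obstacle — is that $\omega$ is genuinely a monoid homomorphism. For this I would note that the monoid product in $\FatDyadicIntervalMonoid$ is vector concatenation, and that the concatenation of two reduced representatives is again reduced, since a coordinate of the concatenated parameter vector is zero only when it was already zero in one of the two factors. Hence, if $(\vell[vec];\vt[vec])$ and $(\vell[vec,'];\vt[vec,'])$ are the reduced representatives of $x$ and $y$, then $(\vell[vec],\vell[vec,'];\vt[vec],\vt[vec,'])$ is the reduced representative of $xy$, whence $\omega(xy)=w(\vell[vec])w(\vell[vec,'])=\omega(x)\omega(y)$; and $\omega$ visibly sends the unit $[(e,\ast)]$ to the empty word. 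Feeding these observations into the general pullback fact completes the proof.
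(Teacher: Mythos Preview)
Your proof is correct and follows essentially the same approach as the paper: both pull back the intersection structure on $\DyadicIntervalMonoid$ along the monoid map $\omega$ and appeal to the intersection-monoid axioms there. You are simply more explicit, isolating the general pullback principle and carefully verifying that $\omega$ is multiplicative via reduced representatives, a step the paper uses tacitly in writing $\omega(\vx[1]\vy[1])=\omega(\vx[1])\omega(\vy[1])$.
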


\begin{proof}
Suppose $\vx[1],\vx[2],\vy[1],\vy[2]\in \FatDyadicIntervalMonoid$ and $\vx[1]\vy[1]\Intersect\vx[2]\vy[2]$. Then we have by definition \[\omega(\vx[1]\vy[1])\Intersect\omega(\vx[2]\vy[2])=\omega(\vx[1])\omega(\vy[1])\Intersect\omega(\vx[2])\omega(\vy[2])\] in $\DyadicIntervalMonoid$. From the axioms, this implies $\omega(\vx[1])\Intersect\omega(\vx[2])$ and then $\vx[1]\Intersect\vx[2]$. 

Verification of the second axiom follows similarly.
\end{proof}

\subsection{\texorpdfstring{Realizing $\mathbb{N}_\infty$-operads}{Realizing â„•âˆž Operads}}

In \cref{remark:disconnection because of strict} we mentioned that the strict column condition causes the operads $\Realize[pow=\tau]{\LittleCubeOperad[k]{1}}$ to become level-wise disconnected. To illustrate why this occurs, suppose we have $x\in \LittleCubeOperad[k]{1}$ and a path $p\in \LittleCubeOperad[k]{1}^I$ where $p(0)=x$. We can't have a path where $x\Disjoint p(1)$ without first having $x \Intersect p(\lambda)$ and $x \neq p(\lambda)$ for some $\lambda\in I$. In other words, we can't continuously break up equal elements in the columns of $\Realize[pow=\tau]{\LittleCubeOperad[k]{1}}$ while keeping the columns strict. Hence, elements with different configuration of equal column elements must lie in different (path)-connected components. 

The monoid $\FatDyadicIntervalMonoid$ doesn't have this problem. For example, $[((a,b),(1,0))]=[((a),(1))]$, but $[((a,b),(1,t))]\Disjoint[][((a),(1))]$ for any $t>0$. This is the key idea of the following theorem.

\begin{theorem}\label{theorem:dyadic barrat eccles are ninf}
    Let $\tau\in\TransferSystemsOf{G}$. The $G$-operad $\Realize[pow=\tau]{\FatDyadicIntervalMonoid}$ is an $\mathbb{N}_\infty$-operad that realizes the transfer system $\tau$.
\end{theorem}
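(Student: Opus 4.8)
The plan is to verify the three conditions of Definition \ref{definition:Ninf operads} for $\Realize[pow=\tau]{\FatDyadicIntervalMonoid}$, together with the identification $\TransfersOf{\Realize[pow=\tau]{\FatDyadicIntervalMonoid}} = \tau$, which is already handled by Theorem \ref{theorem: main realization} since $\FatDyadicIntervalMonoid$ is a non-trivial intersection monoid (it contains, e.g., the classes of $a$ and $b$, which are disjoint). So the real content is showing that each level $\Realize[pow=\tau]{\FatDyadicIntervalMonoid}(n)$ is a universal space for its family $\FixedSubgroupsOf[n]{\Realize[pow=\tau]{\FatDyadicIntervalMonoid}}$; the $\Sigma$-freeness and the contractibility of level $0$ are immediate from the construction. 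For condition (3), I would verify that for each subgroup $\Phi \leq G\times \Sigma_n$, the fixed-point space $\Realize[pow=\tau]{\FatDyadicIntervalMonoid}(n)^{\Phi}$ is either empty or contractible, and that the non-empty ones are exactly the graph subgroups $\Gamma$ with $\SetFromGraph[\Gamma]\in\GeneratedCoefficientSystemOf{\tau}{G/\vH[\Gamma]}$ — this last point is exactly what Theorem \ref{theorem: main realization} and Lemma \ref{lemma: constructed elements support only expected transfers} give us (together with $\Sigma$-freeness forcing non-graph subgroups to have empty fixed points).

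The heart of the argument is the contractibility of the non-empty fixed-point spaces, and this is where the properties of $\FatDyadicIntervalMonoid$ as opposed to $\LittleCubeOperad[k]{1}$ become essential (cf.\ Remark \ref{remark:disconnection because of strict}). Fix a graph subgroup $\Gamma$ with $\SetFromGraph[\Gamma]\in\GeneratedCoefficientSystemOf{\tau}{G/\vH[\Gamma]}$. An element $x\in\Realize[pow=\tau]{\FatDyadicIntervalMonoid}(n)^{\Gamma}$ is a $\Gamma$-equivariant function $G\times\vn[finset]\to\FatDyadicIntervalMonoid$ that is $\Sigma$-disjoint, has strict columns, and supports only $\tau$-transfers. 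I would first show that the strict-column and $\Sigma$-disjointness conditions, for a $\Gamma$-fixed element, amount to: whenever two entries $x(g_1,i_1)$ and $x(g_2,i_2)$ are \emph{not} in the same $\Gamma$-orbit, they must be disjoint in $\FatDyadicIntervalMonoid$ (entries in the same orbit are automatically equal by $\Gamma$-fixedness; entries in distinct orbits that share a row must be disjoint by $\Sigma$-disjointness, and strict columns plus supporting only $\tau$-transfers handles the rest via Lemma \ref{lemma: constructed elements support only expected transfers}). Thus the fixed-point space is identified with (an open-ish subspace of) $\Map^{\Disjoint}(\RightCosets{\Gamma}{G\times\vn[finset]},\FatDyadicIntervalMonoid)$, the space of orbit-indexed families of pairwise-disjoint elements of $\FatDyadicIntervalMonoid$.

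The key step is then to contract this space of pairwise-disjoint tuples. The idea is the one sketched just before the theorem: the word-length-$0$ elements $[(e,\ast)]$ coincide with longer reduced words obtained by padding with a zero coordinate, e.g.\ $[((a,b),(1,0))]=[(a,(1))]$, yet we can continuously \emph{un-pad} along a path $t\mapsto[((a,b),(1,t))]$ that is disjoint from $[(a,(1))]$ for all $t>0$. Concretely: given a disjoint tuple $(x_j)_{j\in\RightCosets{\Gamma}{G\times\vn[finset]}}$, with reduced representatives of possibly different word-lengths, I would first concatenate with freshly-chosen distinguishing prefixes (a standard ``add a binary address'' move: assign to the $j$-th orbit a distinct binary word $w_j$ of common length $\ell=\lceil\log_2 r\rceil$, and route a homotopy that prepends $w_j$ to $x_j$ while keeping everything disjoint throughout — disjointness is only ever improved by prepending distinct prefixes, and the homotopy that slides the $t$-coordinates of the prepended block from $0$ up to $1$ works by Lemma \ref{lemma:fat dyadic is contractible}'s mechanism), and then contract each $x_j$ down to the class $[(w_j,\ast)]$ using the straight-line homotopy $H([(\vell[vec];\vt[vec])],\lambda)=[(\vell[vec];(1-\lambda)\vt[vec])]$ on the tail past the prefix; throughout this second stage the prefixes $w_j$ keep the tuple pairwise-disjoint. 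This exhibits a deformation retraction of $\Map^{\Disjoint}(\RightCosets{\Gamma}{G\times\vn[finset]},\FatDyadicIntervalMonoid)$ onto a point, so the fixed-point space is contractible. (One must check $\Gamma$-equivariance of the contraction: since everything is defined orbit-wise on $\RightCosets{\Gamma}{G\times\vn[finset]}$ and pulled back, it is automatically $\Gamma$-equivariant on $\Realize[pow=\tau]{\FatDyadicIntervalMonoid}(n)^{\Gamma}$.)

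The main obstacle I anticipate is the bookkeeping in that contraction: one needs the homotopies to stay inside $\Realize[pow=\tau]{\FatDyadicIntervalMonoid}(n)^{\Gamma}$ at every time — i.e.\ to preserve $\Sigma$-disjointness, strict columns, $\Gamma$-fixedness, \emph{and} the ``supports only $\tau$-transfers'' constraint — simultaneously. Preserving $\Gamma$-fixedness and strict columns is free because we work orbit-wise; preserving $\Sigma$-disjointness needs the observation that prepending distinct prefixes and shrinking tail coordinates never destroys disjointness of the reduced words restricted to their common length; and preserving the $\tau$-constraint follows because the graph $\vp[graph]$ of the tuple, being a disjoint union of complete graphs on $\Gamma$-orbits, is constant along the homotopy (the intersection pattern never changes), so Lemma \ref{lemma: constructed elements support only expected transfers} applies at every stage. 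Assembling these points carefully is the bulk of the work; once done, the universal-space property follows from the standard characterization (a $G\times\Sigma_n$-space is a universal space for a family $\mathcal F$ iff its $\Phi$-fixed points are empty for $\Phi\notin\mathcal F$ and contractible for $\Phi\in\mathcal F$), and the theorem is proved.
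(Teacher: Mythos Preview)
Your overall strategy matches the paper's: pick a basepoint in the image of $\Psi_\Gamma$, and contract the fixed-point space to it by continuously prepending the basepoint's ``address words'' while shrinking the original element's $t$-coordinates. The paper does this in a single linear homotopy
\[
\Phi([(\vec\ell;\vec t)],\lambda)(g,i)=\bigl[(\vec p(g,i),\vec\ell(g,i);\,\lambda\vec 1,(1-\lambda)\vec t(g,i))\bigr],
\]
rather than your two stages, but that is cosmetic.

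There is, however, a genuine gap in your justification. Your claimed identification of $\Realize[pow=\tau]{\FatDyadicIntervalMonoid}(n)^{\Gamma}$ with $\Map^{\Disjoint}(\RightCosets{\Gamma}{G\times\vn[finset]},\FatDyadicIntervalMonoid)$ is false: entries in distinct $\Gamma$-orbits need \emph{not} be disjoint. For a concrete counterexample take $G=C_2$, $n=2$, $\Gamma$ trivial, $\tau$ trivial, and the element with $x(e,1)=x(\sigma,1)=a$ and $x(e,2)=x(\sigma,2)=b$. This is $\Sigma$-disjoint, has strict (indeed constant) columns, and supports only $H\to H$ transfers, so it lies in $\Realize[pow=\tau]{\FatDyadicIntervalMonoid}(2)^{\Gamma}$; yet $(e,1)$ and $(\sigma,1)$ are distinct $\Gamma$-orbits with equal entries. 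Consequently your claim that ``the intersection pattern never changes along the homotopy'' is also false: at $\lambda=0$ the pattern is whatever the starting element had, while for $\lambda>0$ it jumps to the orbit pattern.

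The fix is exactly what the paper does and what your construction actually achieves, once argued correctly: for $\lambda>0$ the reduced representative of $\Phi(x,\lambda)(g,i)$ begins with $\vec p(g,i)$, so by axiom (1) of an intersection monoid any intersection $\Phi(x,\lambda)(g_1,i_1)\Intersect\Phi(x,\lambda)(g_2,i_2)$ forces $[(\vec p,\vec 1)](g_1,i_1)\Intersect[(\vec p,\vec 1)](g_2,i_2)$. Thus $\vp[graph](\Phi(x,\lambda))\leq \vp[graph]([(\vec p,\vec 1)])$ for $\lambda>0$, and every transfer supported by $\Phi(x,\lambda)$ is already supported by the basepoint, which lies in $\Realize[pow=\tau]$ by Lemma~\ref{lemma: constructed elements support only expected transfers}. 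You do not need the space to \emph{equal} $\Map^{\Disjoint}$; you only need the homotopy to stay inside $\Realize[pow=\tau]{\FatDyadicIntervalMonoid}(n)^{\Gamma}$, and this monotonicity argument is what supplies that.
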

\begin{proof}
Set a graph subgroup $\Gamma \subseteq G\times \Sigma_n$ such that $\SetFromGraph[\Gamma]\in\GeneratedCoefficientSystemOf{\tau}$. We need to show that $\Realize[pow=\tau]{\FatDyadicIntervalMonoid}{n}^{\Gamma}$ is contractible. Recall the map $\Psi:=\Psi_{\Gamma}$ from \cref{lemma: constructed elements support only expected transfers} and fix an arbitrary element $[(\vp[vec],\vec{1})]\in \Image{\Psi} \subseteq \Realize[pow=\tau]{\FatDyadicIntervalMonoid}{n}^{\Gamma}$. Here $\vp[vec]\in \Map(G\times\vn[finset],\DyadicIntervalMonoid)$, and $\vec{1}$ is a function on $G\times\vn[finset]$ such that $\vec{1}(g,i)$ is a vector of $1$'s of the same length as $\vec{p}(g,i)$. We then interpret $[(\vp[vec],\vec{1})]$ to be the function on $G\times \vn[finset]$ given by \[[(\vp[vec],\vec{1})](g,i)= [(\vp[vec]{g,i},\vec{1}(g,i))].\]
Consider the map \begin{gather*}
    \Phi: \Map{G\times \vn[finset],\FatDyadicIntervalMonoid}\times I\to \Map{G\times \vn[finset],\FatDyadicIntervalMonoid} \\ \Phi([(\vell[vec];\vt[vec])],\lambda)(g,i) = [(\vp[vec]{g,i},\vell[vec]{g,i};\lambda\vec{1}(g,i),(1-\vlambda)\vt[vec]{g,i})].
\end{gather*}

To see that this map is well-defined, first observe that $\Map{G\times \vn[finset],\FatDyadicIntervalMonoid}$ is a topological monoid via component-wise multiplication. The map $\Phi$ is then the composition 
\begin{center}
    \begin{tikzpicture}[commutative diagrams/every diagram]
            \matrix[matrix of math nodes, name=m, commutative diagrams/every cell] {
            \Map(G\times\vn[finset],\FatDyadicIntervalMonoid)\times I \\
            I\times \Map(G\times\vn[finset],\FatDyadicIntervalMonoid) \\ I\times I\times \Map(G\times\vn[finset],\FatDyadicIntervalMonoid)\\ \Map(G\times\vn[finset],\FatDyadicIntervalMonoid)\times \Map(G\times\vn[finset],\FatDyadicIntervalMonoid)\\ \Map(G\times\vn[finset],\FatDyadicIntervalMonoid).\\};

            \path[commutative diagrams/.cd, every arrow, every label]
            (m-1-1) edge node {swap} (m-2-1)
            (m-2-1) edge node {$\Delta\times\id$} (m-3-1)
            (m-3-1) edge node {$[(\vp[vec],\lambda\vec{1})]\times H$}(m-4-1)
            (m-4-1) edge node {mult.}(m-5-1);
        \end{tikzpicture}
\end{center}
Here $H$ is the contraction from \cref{lemma:fat dyadic is contractible}.
This is such that $\Phi(-,0)$ is identity, and $\Phi(-,1)=[(\vp[vec],\vec{1})]$.

We claim the map $\Phi$ restricts to a map $\Realize[pow=\tau]{\FatDyadicIntervalMonoid}{n}^{\Gamma}\times I \to \Realize[pow=\tau]{\FatDyadicIntervalMonoid}{n}^{\Gamma}$. First, a quick verification shows that we get a restriction of the form $\Phi:\Realize[pow=G]{\FatDyadicIntervalMonoid}{n}^\Gamma\times I\to \Realize[pow=G]{\FatDyadicIntervalMonoid}{n}^\Gamma$. To show that we get a further restriction onto $\Realize[pow=\tau]{\FatDyadicIntervalMonoid}{n}^\Gamma$ consider the following. For any $(g,i)\in G\times \vn[finset]$, the reduced representative of \[\Phi(\lambda,[(\vell[vec];\vt[vec])])(g,i)=[(\vp[vec]{g,i},\vell[vec]{g,i};\lambda\vec{1},(1-\vlambda)\vt[vec]{g,i})]\] when $\lambda>0$ must contain $\vp[vec]{g,i}$ at the beginning of the letters component since $(\vec{p}(g,i),\vec{1}(g,i))$ is a reduced representation. So when we apply $\omega$, the corresponding word must contain $\omega(\vp[vec]{g,i})$ as a leading subword. 
As a consequence, if we have for some $(g_1,i_1),(g_2,i_2)\in G\times\vn[finset]$ that \[[(\vp[vec],\vec{1})](\vg[1],\vi[1])\Disjoint[]  [(\vp[vec],\vec{1})](\vg[2],\vi[2])\] then \[\Phi(\lambda,[(\vell[vec],\vt[vec])])(\vg[1],\vi[1])\Disjoint \Phi(\lambda,[(\vell[vec],\vt[vec])])(\vg[2],\vi[2])\] for any $\lambda>0$. This implies that any transfer $K\to H$ supported by $\Phi(\lambda,[(\vell[vec],\vt[vec])])$ for $\lambda>0$ must also be supported by $[(\vp[vec],\vec{1})]$. Hence, $\Phi(\lambda,[(\vell[vec],\vt[vec])])\in \Realize[pow=\tau]{\FatDyadicIntervalMonoid}(n)^{\Gamma}$ when $\lambda>0$, and the map $\Phi$ restricts as claimed.

We have thus constructed a contraction via d$\Phi$ and so $\Realize[pow=\tau]{WF}{n}^{\Gamma}$ is contractible. Hence, the theorem has been proven.
\end{proof}

Since $\Coinduced[pow=\tau,e,uline]{\DyadicBarrattEccles}= \Realize[pow=\tau]{\FatDyadicIntervalMonoid}$, \cref{theorem:dyadic barrat eccles are ninf} gives us the promised \cref{theorem B} as a consequence.

\appendix
\section{Proof that the Incomplete Indexing Sequence is an Operad}\label[appendix]{appendix:indexing_is_an_operad}
In this appendix we will go through the details of verifying that $\IncompleteIndexOperad[pow=G]$ is a well-defined operad in $\Poset^G$.

\begin{lemma}
    The $\circ_i$-composition morphisms as defined in \cref{definition:incomplete_indexing_operad} are well-defined morphisms in $\Poset^G$.
\end{lemma}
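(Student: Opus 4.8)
The plan is to check, in turn, the three ingredients in the claim that each map $\circ_i\colon \IncompleteIndexOperad[pow=G](n)\times \IncompleteIndexOperad[pow=G](m)\to \IncompleteIndexOperad[pow=G](n+m-1)$ is a morphism in $\Poset^G$: that its output $\vg[graph]\circ_i\vk[graph]$ is again an object of $\IncompleteIndexOperad[pow=G](n+m-1)$, that is, a simple undirected graph on $G\times\underline{n+m-1}$ with no edge inside any single row $\set{g}\times\underline{n+m-1}$; that $\circ_i$ is monotone separately in each argument, hence a map out of the product poset; and that $\circ_i$ is $G$-equivariant for the diagonal action on its source. The latter two are essentially one-line unpackings of \cref{definition:incomplete_indexing_operad}, so the only clause that asks for a genuine (if brief) argument is well-definedness, and within that the absence of intra-row edges.

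For well-definedness, symmetry of the edge relation of $\vg[graph]\circ_i\vk[graph]$ is clear, since both clauses of the definition are symmetric under interchanging the two endpoints, and a loop $\edge{(g,j)}{(g,j)}$ is excluded because the first clause would force the loop $\edge{(g,\Combine[i]{j})}{(g,\Combine[i]{j})}$ into $\vg[graph]$. The point requiring argument is that $\vg[graph]\circ_i\vk[graph]$ has no edge $\edge{(g,j_1)}{(g,j_2)}$ with $j_1\neq j_2$. I would argue by contradiction: such an edge forces $\edge{(g,\Combine[i]{j_1})}{(g,\Combine[i]{j_2})}$ into $\vg[graph]$, and since $\vg[graph]\in\IncompleteIndexOperad[pow=G](n)$ has no intra-row edges this forces $\Combine[i]{j_1}=\Combine[i]{j_2}$; but $\Combine[i]$ is injective off the window $\set{i,i+1,\dots,i+m-1}$ and constantly $i$ on it, so $j_1$ and $j_2$ both lie in the window and $\Combine[i]{j_1}=\Combine[i]{j_2}=i$. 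The second clause then forces $\edge{(g,\Shift[i]{j_1})}{(g,\Shift[i]{j_2})}$ into $\vk[graph]$; since $\vk[graph]$ also has no intra-row edges and $\Shift[i]$ is injective, we conclude $j_1=j_2$, a contradiction.

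For monotonicity, I note that passing to a larger graph in either slot only makes more edges available to each of the two clauses, so $\circ_i$ preserves the inclusion order in each variable, hence is monotone for the product order. For $G$-equivariance, I would unpack both sides: an edge $\edge{(h_1,j_1)}{(h_2,j_2)}$ lies in $g\cdot(\vg[graph]\circ_i\vk[graph])$ iff $\edge{(\vg[inv]h_1,j_1)}{(\vg[inv]h_2,j_2)}$ lies in $\vg[graph]\circ_i\vk[graph]$, and because $\Combine[i]$ and $\Shift[i]$ act only on the second coordinate, translating the $G$-coordinate by $\vg[inv]$ commutes with both clauses; the resulting condition is visibly the one defining membership of $\edge{(h_1,j_1)}{(h_2,j_2)}$ in $(g\cdot\vg[graph])\circ_i(g\cdot\vk[graph])$. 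I do not expect any real obstacle here: the lemma is pure bookkeeping, and the one step that is not an immediate unpacking is the propagation of the ``no intra-row edge'' condition through the collapse and shift maps carried out above.
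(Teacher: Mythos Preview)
Your proof is correct and follows essentially the same approach as the paper's: both verify $G$-equivariance by unpacking the edge conditions and observing that the $G$-action only touches the first coordinate while $\Combine[i]$ and $\Shift[i]$ act on the second, and both dispatch monotonicity with the observation that enlarging either input graph can only create more edges in the composite. You additionally verify that $\vg[graph]\circ_i\vk[graph]$ actually lands in $\IncompleteIndexOperad[pow=G](n+m-1)$ by propagating the no-intra-row-edge condition through $\Combine[i]$ and $\Shift[i]$; the paper leaves this implicit, so your write-up is in fact slightly more complete.
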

\begin{proof}
    Let $\vg[graph]\in\IncompleteIndexOperad[pow=G]{n}$, $\vk[graph]\in\IncompleteIndexOperad[pow=G]{m}$, and $i\in\vn[finset]$. For $g\in G$, the graph $g\cdot (\vg[graph]\circ_i \vk[graph])$ has an edge $$\edge{(h_1,j_1)}{(h_2,j_2)}$$ if $\vg[graph]\circ_i \vk[graph]$ has the edge $$\edge{(\vg[inv]h_1,j_1)}{(\vg[inv]h_2,j_2)}.$$ 
    By definition, this means $\vg[graph]$ has the edge $$\edge{(\vg[inv]h_1,\Combine[i]{j_1})}{(\vg[inv]h_2,\Combine[i]{j_2})}$$ and, if $j_1,j_2\in \Combine[i,inv](i)$, then $\vk[graph]$ has edge $$\edge{(\vg[inv]\vh[1],\Shift[i]{j_1})}{(\vg[inv]\vh[2],\Shift[i]{j_2})}.$$
    i.e., $g\cdot \vg[graph]$ has edge $\edge{(h_1,\Combine[i]{j_1})}{(h_2,\Combine[i]{j_2})}$ and $g\cdot \vk[graph]$ has edge $\edge{(\vh[1],\Shift[i]{j_1})}{(\vh[2],\Shift[i]{j_2})}$. 
    Hence, we conclude that \[(g\cdot \vg[graph])\circ_i (g\cdot \vk[graph])=g\cdot (\vg[graph]\circ_i\vk[graph]).\]Moreover, adding edges to either $\vg[graph]$ or $\vk[graph]$ will clearly cause $\vg[graph]\circ_i\vk[graph]$ to have more edges. We then deduce that $\circ_i$ are well-defined morphisms in $\Poset^G$. 
\end{proof}

\begin{lemma}
    The $\circ_i$-composition maps of $\IncompleteIndexOperad[pow=G]$ satisfies the associativity condition.
\end{lemma}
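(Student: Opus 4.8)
\section*{Proof proposal for the associativity lemma}

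The plan is to observe that $\circ_i$-composition of graphs is \emph{monotone} and \emph{computed pointwise on pairs of vertices}, and then to reduce the associativity axiom \cref{eq:operad_associativity} to a finite family of Boolean identities about edge membership. Concretely, fix graphs $\vx\in\IncompleteIndexOperad[pow=G]{n}$, $\vy\in\IncompleteIndexOperad[pow=G]{m}$, $\vz\in\IncompleteIndexOperad[pow=G]{\ell}$ and a potential edge $\edge{(g_1,k_1)}{(g_2,k_2)}$ on the vertex set $G\times\underline{n+m+\ell-2}$, so necessarily $g_1\neq g_2$. Unfolding \cref{definition:incomplete_indexing_operad} twice, membership of this edge in $(\vx\circ_i\vy)\circ_j\vz$ is a conjunction of at most three assertions: that a specific edge lies in $\vx$, that (under an explicit arithmetic condition on $k_1,k_2$) a specific edge lies in $\vy$, and that (under another such condition) a specific edge lies in $\vz$; here ``specific edge'' means the one obtained from $\edge{(g_1,k_1)}{(g_2,k_2)}$ by applying an iterated composite of the collapse functions $\Combine[i]$ and shift functions $\Shift[i]$ to the index coordinates. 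Since each of the three right-hand sides of \cref{eq:operad_associativity} unfolds the same way, and since the composition formula only ever references edges between the two fixed fibres $\{g_1\}\times\underline{\,\cdot\,}$ and $\{g_2\}\times\underline{\,\cdot\,}$, it suffices to check, in each of the three cases of \cref{eq:operad_associativity}, that the two unfoldings yield the same conjunction.

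The verification then splits into two routine parts. First, the \emph{coordinate identities}: one checks that the relevant iterated composites of $\Combine$ and $\Shift$ agree; for instance in the case $i\leq j\leq i+m-1$ one needs
\[
  \Combine[i,pow={n,m}]\circ\Combine[j,pow={n+m-1,\ell}]=\Combine[i,pow={n,m+\ell-1}],
\]
together with the analogous mixed identities relating $\Shift$ to $\Combine$, and the corresponding identities (with $j$ replaced by $j-i+1$, $i+\ell-1$, or $j-m+1$) in the other two cases. These are precisely the index identities that already underlie the associativity of $\OConstr[pow=G]{M}$ in \Cref{lemma:O_construction_is_an_operad}, and each follows from the same finite case analysis on the position of an index relative to the intervals $[1,i)$, $[i,i+m-1]$, and $(i+m-1,n+m-1]$. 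Second, the \emph{activation bookkeeping}: one checks that the side conditions of the form ``the collapse function equals $i$'' (which switch on the $\vy$-clause) and ``the collapse function equals $j$'' (which switch on the $\vz$-clause) turn on in matching situations on both sides; this again reduces to the same interval case analysis, using $\Combine[i,inv]{i}=\{i,i+1,\dots,i+m-1\}$ and its analogues to identify exactly when an index lands in the block being substituted.

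I expect the activation bookkeeping to be the main (though entirely mechanical) obstacle. The delicate point is that on a right-hand side such as $\vx\circ_i(\vy\circ_{j-i+1}\vz)$, the nested condition ``the first collapse equals $i$, and then the second collapse equals $j-i+1$'' must be shown to select exactly those index pairs for which, on the left-hand side $(\vx\circ_i\vy)\circ_j\vz$, both the outer collapse equals $j$ and the corresponding edge of $\vx\circ_i\vy$ was itself one of the $\vz$-blown-up edges. Once the coordinate identities and the matching of these activation patterns are in hand, the two conjunctions coincide edge by edge, and \cref{eq:operad_associativity} follows; no input beyond \cref{definition:incomplete_indexing_operad} and the arithmetic of $\Combine$ and $\Shift$ is required.
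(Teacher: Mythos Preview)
Your proposal is correct and follows essentially the same route as the paper's proof: unfold the edge-membership condition for $(\vx\circ_i\vy)\circ_j\vz$ into a conjunction of clauses about $\vx$, $\vy$, $\vz$, split into the three cases of \cref{eq:operad_associativity}, and in each case verify that the iterated $\Combine$/$\Shift$ composites agree and that the ``activation'' side conditions match. One small caveat: the proof of \Cref{lemma:O_construction_is_an_operad} in the paper does not actually spell out the associativity identities for $\Combine$ and $\Shift$ (it just invokes monoid associativity of $M$), so you will still need to write out identities such as $\Combine[i,pow={n,m}]\circ\Combine[j,pow={n+m-1,\ell}]=\Combine[j,pow={n,\ell}]\circ\Combine[i+\ell-1,pow={n+\ell-1,m}]$ explicitly rather than citing that lemma.
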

\begin{proof}
    To prove associativity, suppose we have graphs $\vg[graph]\in\IncompleteIndexOperad[pow=G]{n},\vh[graph]\in \IncompleteIndexOperad[pow=G]{m}$, and $\vk[graph]\in\IncompleteIndexOperad[pow=G]{\ell}$, and integers $i\in\vn[finset]$, and $j\in \underline{n+m-1}$. We must show the following holds:
    \begin{gather}\label{eq:graph_associativity}
    (\vg[graph]\circ_i \vh[graph])\circ_j \vk[graph] = \begin{cases*}
        (\vg[graph] \circ_j \vk[graph]) \circ_{i+\ell-1} \vh[graph] & if $1\leq j < i$, \\ 
        \vg[graph]\circ_i (\vh[graph] \circ_{j-i+1} \vk[graph]) & if $i\leq j \leq i+m-1$, \\
        (\vg[graph]\circ_{j-m+1} \vk[graph]) \circ_{i} \vh[graph] & if $i+m\leq j \leq n+m-1$.  
    \end{cases*}
\end{gather} 
    Unpacking definitions, the composite $(\vg[graph]\circ_i\vh[graph])\circ_j\vk[graph]$ has an edge $\edge{(h_1,j_1)}{(h_2,j_2)}$ if the following three conditions holds:
    \begin{gather}
        \text{the graph $\vg[graph]$ has an edge $\edge{(h_1,\Combine[i,pow={n,m}]{\Combine[j,pow={n+m-1,\ell}]{j_1}})}{(h_2,\Combine[i,pow={n,m}]{\Combine[j,pow={n+m-1,\ell}]{j_2}})}$} \label{eq:graph_assoc_condition_1}\\ 
        \begin{gathered}
        \text{if $\Combine[i,pow={n,m}]{\Combine[j,pow={n+m-1,\ell}]{\vj[1]}}=\Combine[i,pow={n,m}]{\Combine[j,pow={n+m-1,\ell}]{\vj[2]}}=i$, the graph $\vh[graph]$ has an edge} \\ \edge{(h_1,\Shift[i]{\Combine[j,pow={n+m-1,\ell}]{\vj[1]}})}{(h_2,\Shift[i]{\Combine[j,pow={n+m-1,\ell}]{\vj[2]}})}
        \end{gathered} \label{eq:graph_assoc_condition_2}\\ 
        \begin{gathered}
            \text{if $\Combine[j,pow={n+m-1,\ell}]{\vj[1]}=\Combine[j,pow={n+m-1,\ell}]{\vj[2]}=j$, the graph $\vk[graph]$ has an edge } \\ \edge{(h_1,\Shift[j]{j_1})}{(h_2,\Shift[j]{j_2})}. 
        \end{gathered}\label{eq:graph_assoc_condition_3}
    \end{gather}

    \underline{\emph{Case 1: $1\leq j<i$.}}
    In this case we have \[\Combine[i,pow={n,m}]\circ\Combine[j,pow={n+m-1,\ell}]=\Combine[j,pow={n,\ell}]\circ\Combine[i+\ell-1,pow={n+\ell-1,m}],\] and condition \labelcref{eq:graph_assoc_condition_1} above is equivalent to:
    \begin{equation}
            \text{the graph $\vg[graph]$ has an edge $\edge{(h_1,\Combine[j,pow={n,\ell}]{\Combine[i+\ell-1,pow={n+\ell-1,m}]{j_1}})}{(h_2,\Combine[i,pow={n,m}]{\Combine[j,pow={n+m-1,\ell}]{j_2}})}$.}
         \label{eq:graph_assoc_firstcase_condition_1}
    \end{equation}
    Let $k\in \underline{n+m+\ell-2}$. Observe that as $1\leq j < i$, we have the following sequence of equivalences
    \begin{gather*}
        \Combine[i,pow={n,m}]{\Combine[j,pow={n+m-1,\ell}]{k}}=i \\
        \iff \Combine[j,pow={n,\ell}]{\Combine[i+\ell-1,pow={n+\ell-1,m}](k)}=i \\ 
        \iff \Combine[i+\ell-1,pow={n+\ell-1,m}](k)=i+\ell-1.
    \end{gather*}
    Also, for $k\geq i+\ell-1$, we have that \[\Shift[i]\Combine[j,pow={n+m-1,\ell}]{k}=\Shift[i+\ell-1]{k}.\]
    Hence, condition \labelcref{eq:graph_assoc_condition_2} above is equivalent to:
    \begin{equation}
        \begin{gathered}
            \text{if $\Combine[i+l-1,pow={n+\ell-1,m}]{j_1}=\Combine[i+l-1,pow={n+\ell-1,m}]{j_2}=i+\ell-1$, the graph $\vh[graph]$ has an edge} \\ \edge{(h_1,\Shift[i+\ell-1]{j_1})}{(h_2,\Shift[i+\ell-1]{j_2})}.
        \end{gathered} \label{eq:graph_assoc_firstcase_condition_2}
    \end{equation}
    Similarly, for $1\leq j <i$ we have the following equivalences
    \begin{gather*}
        \Combine[j,pow={n+m-1,\ell}]{k}=j \\
        \iff \Combine[i,pow={n,m}]{\Combine[j,pow={n+m-1,\ell}]{k}}=j \\ 
        \iff \Combine[j,pow={n,\ell}]{\Combine[i+\ell-1,pow={n+\ell-1,m}](k)}=j.
    \end{gather*}
    Moreover, since $\Combine[i+\ell-1,pow={n+\ell-1,m}](k)=k$ for all $k<i+\ell-1$, if $\Combine[j,pow={n,\ell}]{\Combine[i+\ell-1,pow={n+\ell-1,m}](k)}=j$, then we must have $\Combine[i+\ell-1,pow={n+\ell-1,m}]{k}=k$ when $1\leq j < i$. Hence, \labelcref{eq:graph_assoc_condition_3} is equivalent to:
    \begin{equation}
        \begin{gathered}
            \text{if $\Combine[i,pow={n,m}]{\Combine[j,pow={n+m-1,\ell}]{\vj[1]}}=\Combine[i,pow={n,m}]{\Combine[j,pow={n+m-1,\ell}]{\vj[2]}}=j$, the graph $\vk[graph]$ has an edge} \\ \edge{(h_1,\Shift[j]{\Combine[i+\ell-1,pow={n+\ell-1,m}]{j_1}})}{(h_2,\Shift[j]{\Combine[i+\ell-1,pow={n+\ell-1,m}]{j_2}})}.
        \end{gathered} \label{eq:graph_assoc_firstcase_condition_3}
    \end{equation}
    The statements \labelcref{eq:graph_assoc_firstcase_condition_1,eq:graph_assoc_firstcase_condition_2,eq:graph_assoc_firstcase_condition_3} are exactly the conditions for $(\vg[graph]\circ_j\vk[graph])\circ_{i+\ell-1}\vh[graph]$ to have the edge $\edge{(h_1,j_1)}{(h_2,j_2)}$. Hence, we have shown that when $1\leq j < i$ that \[(\vg[graph]\circ_i \vh[graph])\circ_j \vk[graph] = (\vg[graph]\circ_j\vk[graph])\circ_{i+\ell-1}\vh[graph].\]

    \underline{\emph{Case 2: $i\leq j \leq i+m-1$.}}
    Unpacking definitions, the graph $\vg[graph]\circ_i(\vh[graph]\circ_{j-i+1}\vk[graph])$ has an edge $\edge{(\vh[1],\vj[1])}{(\vh[2],\vj[2])}$ if the following three conditions hold.
    \begin{equation}
        \text{The graph $\vg[graph]$ has the edge } \edge{(\vh[1],\Combine[i,pow={n,m+\ell-1}]{\vj[1]})}{(\vh[2],\Combine[i,pow={n,m+\ell-1}]{\vj[2]})} \label{eq:graph_assoc_secondcase_condition_1}
    \end{equation}
    \begin{equation}
        \begin{gathered}
            \text{If $\Combine[i,pow={n,m+\ell-1}]{j_1}=\Combine[i,pow={n,m+\ell-1}]{j_2}=i$, then the graph $\vh[graph]$ has edge} \\ \edge{(\vh[1],\Combine[j-i+1,pow={m,\ell}]{\Shift[i]{j_1}})}{(\vh[2],\Combine[j-i+1,pow={m,\ell}]{\Shift[i]{j_2}})}. \label{eq:graph_assoc_secondcase_condition_2}
        \end{gathered}
    \end{equation}
    \begin{equation}
        \begin{gathered}
            \text{If $\Combine[i,pow={n,m+\ell-1}]{j_1}=\Combine[i,pow={n,m+\ell-1}]{j_2}=i$, and $\Combine[j-i+1,pow={m,\ell}]{\Shift[i]{\vj[1]}}=\Combine[j-i+1,pow={m,\ell}]{\Shift[i]{\vj[2]}}=j-i+1$}\\ \text{then the graph $\vk[graph]$ has edge } \edge{(\vh[1],\Shift[j-i+1]{\Shift[i]{j_1}})}{(\vh[2],\Shift[j-i+1]{\Shift[i]{j_2}})}. \label{eq:graph_assoc_secondcase_condition_3}
        \end{gathered}
    \end{equation}

    In this case we have \[\Combine[i,pow={n,m}]\circ \Combine[j,pow={n+m-1,\ell}]= \Combine[i,pow={n,m+\ell-1}]\] and so \labelcref{eq:graph_assoc_condition_1} and $\labelcref{eq:graph_assoc_secondcase_condition_1}$ are equivalent. Also, note that for $k> i$ we have that \[\Shift[i]{\Combine[j,pow={n+m-1,\ell}]{k}}= \Combine[j-i+1,pow={m,\ell}]{\Shift[i]{k}}.\] Hence, we deduce that $\labelcref{eq:graph_assoc_condition_2}$ and $\labelcref{eq:graph_assoc_secondcase_condition_2}$ are equivalent. This equation also implies that
    \begin{gather*}
       \Combine[j-i+1,pow={m,\ell}]{\Shift[i]{k}}=j-i+1=\Shift[i]{j} \\ 
       \iff \Combine[j,pow={n+m-1,\ell}]{k}=j \\ 
       \implies \Combine[i,pow={n,m}]{\Combine[j,pow={n+m-1,\ell}]{k}}= \Combine[i,pow={n,m+\ell-1}]{k} = i 
    \end{gather*}
    Since $\Shift[j-i+1]\circ \Shift[i] = \Shift[j]$, we deduce the statements $\labelcref{eq:graph_assoc_condition_3}$ and $\labelcref{eq:graph_assoc_secondcase_condition_3}$ are equivalent when $i\leq j\leq i+m-1$. Thus, we have shown that \labelcref{eq:graph_assoc_condition_1,eq:graph_assoc_condition_2,eq:graph_assoc_condition_3} are equivalent to \labelcref{eq:graph_assoc_secondcase_condition_1,eq:graph_assoc_secondcase_condition_2,eq:graph_assoc_secondcase_condition_3}, and so we have \[(\vg[graph]\circ_i \vh[graph])\circ_j \vk[graph]=\vg[graph]\circ_i (\vh[graph] \circ_{j-i+1} \vk[graph])\] when $i\leq j\leq i+m-1$.

    \underline{\emph{Case 3: $i+m\leq j \leq n+m-1$.}}
    This case is similar to Case 1, except for some differences in indexing, and so we omit it.
\end{proof}

\begin{lemma}
    The $\circ_i$-composition maps of $\IncompleteIndexOperad[pow=G]$ satisfies the equivariance condition.
\end{lemma}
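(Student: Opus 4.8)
The plan is to check the equivariance condition \cref{eq:operad_equivariance} directly on edges, reducing it to the combinatorial identities relating the collapse maps $\Combine$, the shift maps $\Shift$, and the block permutation $\vsigma\circ_i\vtau$ that were already established in the proof of \cref{lemma:O_construction_is_an_operad}. Recall from there that for $\vsigma\in\vSigma[n]$, $\vtau\in\vSigma[m]$ and $i\in\vn[finset]$ one has $(\vsigma\circ_i\vtau)^{-1}=\vsigma[inv]\circ_{\vsigma{i}}\vtau[inv]$ together with the functional identities
\[
\vsigma[inv]\Combine[\vsigma{i}]{k}=\Combine[i]{(\vsigma[inv]\circ_{\vsigma{i}}\vtau[inv])(k)}
\qquad\text{and}\qquad
\vtau[inv]\Shift[\vsigma{i}]{k}=\Shift[i]{(\vsigma[inv]\circ_{\vsigma{i}}\vtau[inv])(k)},
\]
valid for all $k\in\underline{n+m-1}$; the first identity is insensitive to which permutation sits in its middle slot, since $\Combine[i]$ collapses exactly the block of indices that $\vtau[inv]$ permutes. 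Write $\vrho:=\vsigma\circ_i\vtau$, so that $\vrho[inv]=\vsigma[inv]\circ_{\vsigma{i}}\vtau[inv]$. Since the $\Sigma$-actions never touch the $G$-coordinate of a vertex, I will suppress that coordinate in the bookkeeping below; the $G$-coordinates are carried along verbatim.

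First I would fix two vertices $(h_1,j_1),(h_2,j_2)\in G\times\underline{n+m-1}$ and, via \cref{definition:incomplete_indexing_operad}, spell out exactly when $\edge{(h_1,j_1)}{(h_2,j_2)}$ is an edge of the left-hand side $(\vsigma\cdot\vg[graph])\circ_{\vsigma{i}}(\vtau\cdot\vk[graph])$: this holds precisely when $\edge{(h_1,\Combine[\vsigma{i}]{j_1})}{(h_2,\Combine[\vsigma{i}]{j_2})}$ is an edge of $\vsigma\cdot\vg[graph]$ and, whenever $\Combine[\vsigma{i}]{j_1}=\Combine[\vsigma{i}]{j_2}=\vsigma{i}$, the pair $\edge{(h_1,\Shift[\vsigma{i}]{j_1})}{(h_2,\Shift[\vsigma{i}]{j_2})}$ is an edge of $\vtau\cdot\vk[graph]$. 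Unwinding the definition of the $\Sigma$-actions on graphs turns these into statements about $\vg[graph]$ and $\vk[graph]$ whose second coordinates are pulled back by $\vsigma[inv]$, respectively $\vtau[inv]$; the two displayed identities then rewrite $\vsigma[inv]\Combine[\vsigma{i}]{j_r}$ as $\Combine[i]{\vrho[inv](j_r)}$ and $\vtau[inv]\Shift[\vsigma{i}]{j_r}$ as $\Shift[i]{\vrho[inv](j_r)}$ for $r=1,2$, while the same first identity (which does not see the middle permutation) also shows that the side condition $\Combine[\vsigma{i}]{j_1}=\Combine[\vsigma{i}]{j_2}=\vsigma{i}$ is equivalent to $\Combine[i]{\vrho[inv](j_1)}=\Combine[i]{\vrho[inv](j_2)}=i$. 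Assembling all of this, $\edge{(h_1,j_1)}{(h_2,j_2)}$ is an edge of the left-hand side precisely when $\edge{(h_1,\Combine[i]{\vrho[inv](j_1)})}{(h_2,\Combine[i]{\vrho[inv](j_2)})}$ is an edge of $\vg[graph]$ and, whenever both endpoints collapse to $i$, the pair $\edge{(h_1,\Shift[i]{\vrho[inv](j_1)})}{(h_2,\Shift[i]{\vrho[inv](j_2)})}$ is an edge of $\vk[graph]$; but that is exactly the condition for $\edge{(h_1,\vrho[inv](j_1))}{(h_2,\vrho[inv](j_2))}$ to be an edge of $\vg[graph]\circ_i\vk[graph]$, hence for $\edge{(h_1,j_1)}{(h_2,j_2)}$ to be an edge of $\vrho\cdot(\vg[graph]\circ_i\vk[graph])=(\vsigma\circ_i\vtau)\cdot(\vg[graph]\circ_i\vk[graph])$. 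Since the two sides of \cref{eq:operad_equivariance} are graphs on the common vertex set $G\times\underline{n+m-1}$ with identical edge sets, they coincide.

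The step I expect to be the real obstacle --- though it is bookkeeping rather than anything deep --- is matching the two side conditions: one must check that the block of indices collapsed to $\vsigma{i}$ by $\Combine[\vsigma{i}]$ corresponds, under $\vrho[inv]$, exactly to the block collapsed to $i$ by $\Combine[i]$, so that the ``both endpoints collapse'' hypotheses appearing in the two composition recipes truly coincide instead of one merely refining the other. This is forced by the first displayed identity together with $\vsigma[inv]\vsigma{i}=i$, but it must be stated with care. Once that point is pinned down, everything else is a single substitution, and I would write the argument up as the one chain of equivalences sketched above rather than splitting into cases.
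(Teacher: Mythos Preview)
Your proposal is correct and follows essentially the same route as the paper: both arguments fix an arbitrary edge, unwind the edge conditions for the two sides of \cref{eq:operad_equivariance} in terms of $\vg[graph]$ and $\vk[graph]$, and then invoke exactly the identities $\vsigma[inv]\Combine[\vsigma{i}]{k}=\Combine[i]{(\vsigma[inv]\circ_{\vsigma{i}}\vtau[inv])(k)}$ and $\vtau[inv]\Shift[\vsigma{i}]{k}=\Shift[i]{(\vsigma[inv]\circ_{\vsigma{i}}\vtau[inv])(k)}$ from \cref{lemma:O_construction_is_an_operad} to match them. Your introduction of $\vrho$ and your remark that the side conditions align because the first identity is insensitive to the middle permutation are cosmetic refinements of the same argument the paper gives.
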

\begin{proof}
    For graphs $\vg[graph]\in \IncompleteIndexOperad[pow=G]{n}$, and $\vk[graph]\in \IncompleteIndexOperad[pow=G]{m}$, integer $i\in\vn[finset]$, and permutations $\sigma\in \vSigma[n]$, and $\tau\in \vSigma[m]$, we need to show that 
    \begin{equation}
        (\sigma\cdot \vg[graph])\circ_{\vsigma{i}}(\tau\cdot\vk[graph]) = (\vsigma\circ_i\vtau)\cdot (\vg[graph]\circ_i\vk[graph]) \label{eq:indexing_equivariance_goal}
    \end{equation} 
    holds. The graph $(\sigma\cdot \vg[graph])\circ_{\vsigma{i}}(\tau\cdot\vk[graph])$ has the edge $\edge{(\vh[1],\vj[1])}{(\vh[2],\vj[2])}$ if and only if 
    \begin{gather}
        \text{the graph $\vg[graph]$ has the edge $\edge{(\vh[1],\vsigma[inv]\Combine[\vsigma{i},pow={n,m}]{\vj[1]})}{(\vh[2],\vsigma[inv]\Combine[\vsigma{i},pow={n,m}]{\vj[2]})}$, and} \label{eq:indexing_equivariance_left_condition_1} \\ \begin{gathered}
            \text{if $\Combine[\vsigma{i},pow={n,m}]{\vj[1]}=\Combine[\vsigma{i},pow={n,m}]{\vj[2]}=\vsigma{i}$, the graph $\vk[graph]$ has the edge} \\ \edge{(\vh[1],\vtau[inv]\Shift[\vsigma{i}]{\vj[1]})}{(\vh[2],\vtau[inv]\Shift[\vsigma{i}]{\vj[2]})}. 
        \end{gathered}\label{eq:indexing_equivariance_left_condition_2}
    \end{gather}
    Whereas, the graph $(\vsigma\circ_i\vtau)\cdot (\vg[graph]\circ_i\vk[graph])$ has the edge $\edge{(\vh[1],\vj[1])}{(\vh[2],\vj[2])}$ if and only if
    \begin{gather}
        \begin{gathered}
        \text{the graph $\vg[graph]$ has the edge} \\  \edge{(\vh[1],\Combine[i,pow={n,m}]{(\vsigma[inv]\circ_{\sigma(i)}\vtau[inv])(\vj[1])})}{(\vh[2],\Combine[i,pow={n,m}]{(\vsigma[inv]\circ_{\sigma(i)}\vtau[inv])(\vj[2])})}
        \end{gathered} \label{eq:indexing_equivariance_right_condition_1}\shortintertext{and, }
         \begin{gathered}
            \text{if $\Combine[i,pow={n,m}]{(\vsigma[inv]\circ_{\sigma(i)}\vtau[inv])(\vj[1])}=\Combine[i,pow={n,m}]{(\vsigma[inv]\circ_{\sigma(i)}\vtau[inv])(\vj[2])}=i$, the graph $\vk[graph]$ has the edge} \\ \edge{(\vh[1],\Shift[i]{(\vsigma[inv]\circ_{\sigma(i)}\vtau[inv])(\vj[1])})}{(\vh[2],\Shift[i]{(\vsigma[inv]\circ_{\sigma(i)}\vtau[inv])(\vj[2])})}.
        \end{gathered} \label{eq:indexing_equivariance_right_condition_2}
    \end{gather}
    Recall from \cref{lemma:O_construction_is_an_operad} we have the equations
    \begin{gather*}
        \vsigma[inv]\Combine[\vsigma{i}]{k} = \Combine[i]{(\vsigma[inv]\circ_{\vsigma{i}}\id)(k)}  = \Combine[i]{(\vsigma[inv]\circ_{\vsigma{i}}\vtau[inv])(k)} \shortintertext{and,}
        \vtau[inv]\Shift[\vsigma{i}](k) = \Shift[\vsigma{i}]((\id\circ_{\sigma(i)}\vtau[inv])(k)) = \Shift[i]((\vsigma[inv]\circ_{\sigma(i)}\vtau[inv])(k)).
    \end{gather*}
    Hence, we have that $\labelcref{eq:indexing_equivariance_left_condition_1}$ is equivalent to $\labelcref{eq:indexing_equivariance_right_condition_1}$, and $\labelcref{eq:indexing_equivariance_left_condition_2}$ is equivalent to $\labelcref{eq:indexing_equivariance_right_condition_2}$. Hence, \labelcref{eq:indexing_equivariance_goal} holds and we are done.
\end{proof}

\section{Strict Columns are Necessary}\label[appendix]{appendix:strict columns}

In this small appendix, we will illustrate why we need to assume the strict column condition in the definition of $\Realize[pow=G]{M}$. 
Let us write $\OConstr[pow=G,\Sigma]{M}$ for the suboperad of $\OConstr[pow=G]{M}$ of all $\Sigma$-free elements, and define a $G$-symmetric sequence \[\OConstr[pow=\tau,\Sigma]{M}(n):=\set[\bigg]{x\in \OConstr[pow=G,\Sigma]{M}(n)\given \text{if } x \text{ supports a transfer }K\to H,\text{ then }K\Transfer{\tau}H}.\] 
That is, $\OConstr[pow=\tau,\Sigma]{M}$ is $\Realize[pow=\tau]{M}$ without the requirement the columns are strict. We will show that $\OConstr[pow=\tau,\Sigma]$ fails to be an operad generally.

Consider the case $G=C_4$, $\tau=\set[\big]{C_2\to C_4}$, and $M=\LittleCubeOperad[1](1)$. Consider the element $x\in \OConstr[pow=\tau,\Sigma]{\LittleCubeOperad[1]{1}}{2}$ where each $x(g,i):I\to I$ is given by the following: \begin{gather*}
    x(e,1)(z)=\frac{1}{4}z+0, \quad x(e,2)(z)=\frac{1}{4}z+\frac{3}{4}, \\  
    x(\tau^2,1)(z)=\frac{1}{4}z+\frac{1}{8}, \quad x(\tau^2,2)(z)=\frac{1}{4}z+\frac{5}{8}, \\
        x(\tau,1)(z)=\frac{1}{4}z+\frac{3}{4}, \quad x(\tau,2)(z)=\frac{1}{4}z+0, \\ 
        x(\tau^3,1)(z)=\frac{1}{4}z+\frac{5}{8}, \quad x(\tau^3,2)(z)=\frac{1}{4}z+\frac{1}{8}.
\end{gather*}
Similarly, let $y\in \OConstr[pow=\tau,\Sigma]{\LittleCubeOperad[1]{1}}{2}$ be given by the following maps:
\begin{gather*}
    y(e,1)(z)=\frac{1}{2}z+0, \quad y(e,2)(z)=\frac{1}{2}z+\frac{1}{2}, \\  
    y(\tau^2,1)(z)=\frac{1}{2}z+0, \quad y(\tau^2,2)(z)=\frac{1}{2}z+\frac{1}{2}, \\
        y(\tau,1)(z)=\frac{1}{2}z+\frac{1}{2}, \quad y(\tau,2)(z)=\frac{1}{2}z+0, \\ 
        y(\tau^3,1)(z)=\frac{1}{2}z+\frac{1}{2}, \quad y(\tau^3,2)(z)=\frac{1}{2}z+0.
\end{gather*}
The element $z=x\circ_1 y$ is then 
\begin{gather*}
    z(e,1)(z)=\frac{1}{8}z+0, \quad z(e,2)(z)=\frac{1}{8}z+\frac{1}{8}, \quad z(e,3)(z)= \frac{1}{4}z+\frac{3}{4},\\  
    z(\tau^2,1)(z)=\frac{1}{8}z+\frac{1}{8}, \quad z(\tau^2,2)(z)=\frac{1}{8}z+\frac{2}{8}, \quad z(\tau^2,3)(z)= \frac{1}{4}z+\frac{5}{8},\\
    z(\tau,1)(z)=\frac{1}{8}z+\frac{7}{8}, \quad z(\tau,2)(z)=\frac{1}{8}z+\frac{6}{8}, \quad z(\tau,3)(z)= \frac{1}{4}z+0,\\
    z(\tau^3,1)(z)=\frac{1}{8}z+\frac{6}{8}, \quad z(\tau^3,2)(z)=\frac{1}{8}z+\frac{5}{8}, \quad z(\tau^3,3)(z)= \frac{1}{4}z+\frac{1}{8}.
\end{gather*}
The twist map $\alpha:C_2\to \underline{3}$ with $\alpha(e)=2$, $\alpha(\tau^2)=1$ then exhibits a transfer of the form $e\to C_2$ which is not in $\tau$. Hence, we see that $\OConstr[pow=\tau,\Sigma]{\LittleCubeOperad[1]{1}}$ is not closed under composition.

\printbibliography
\addcontentsline{toc}{chapter}{Bibliography}
\end{document}